\newtheorem{theorem}{Theorem}[section]
\newtheorem{openquestion}[theorem]{Open Question}
\newtheorem{corollary}[theorem]{Corollary}
\newtheorem{definition}[theorem]{Definition}
\newtheorem{lemma}[theorem]{Lemma}
\newtheorem{proposition}[theorem]{Proposition}
\newtheorem{remark}[theorem]{Remark}
\numberwithin{equation}{section}
\newcommand{\barint}{
\rule[.036in]{.12in}{.009in}\kern-.16in \displaystyle\int }
\newcommand{\barcal}{\mbox{$ \rule[.036in]{.11in}{.007in}\kern-.128in\int $}}
\DeclareMathOperator*{\argmin}{arg\,min}
\let\@wraptoccontribs\wraptoccontribs
\mathchardef\mhyphen="2D
\title{On functions of bounded $\beta$-dimensional mean oscillation}
\author[Y.-W. Chen]{You-Wei Benson Chen}
\author[D. Spector]{Daniel Spector}
\address[Y.-W. Chen]{
National Chiao Tung University, Department of Applied Mathematics, 1001 Ta Hsueh Rd, 30010 Hsinchu, Taiwan, R.O.C.
}
\email{bensonchen.sc07@nycu.edu.tw}
\address[D. Spector]{Department of Mathematics, National Taiwan Normal University, No. 88, Section 4, Tingzhou Road, Wenshan District, Taipei City, Taiwan 116, R.O.C.
}
\email{spectda@protonmail.com}
\date{}
\begin{document}

\maketitle

\begin{abstract}
In this paper, we define a notion of $\beta$-dimensional mean oscillation of functions $u: Q_0 \subset \mathbb{R}^d \to \mathbb{R}$ which are integrable on $\beta$-dimensional subsets of the cube $Q_0$:
\begin{align*}
\|u\|_{BMO^{\beta}(Q_0)}\vcentcolon=   \sup_{Q \subset Q_0} \inf_{c \in \mathbb{R}} \frac{1}{l(Q)^\beta}  \int_{Q}  |u-c| \;d\mathcal{H}^{\beta}_\infty,
\end{align*}
where the supremum is taken over all finite subcubes $Q$ parallel to $Q_0$, $l(Q)$ is the length of the side of the cube $Q$, and $\mathcal{H}^{\beta}_\infty$ is the Hausdorff content.  In the case $\beta=d$ we show this definition is equivalent to the classical notion of John and Nirenberg, while our main result is that for every $\beta\in (0,d]$ one has a dimensionally appropriate analogue of the John-Nirenberg inequality for functions with bounded $\beta$-dimensional mean oscillation:  There exist constants $c,C>0$ such that
\begin{align*}
\mathcal{H}^{\beta}_\infty \left(\{x\in Q:|u(x)-c_Q|>t\}\right) \leq C l(Q)^\beta \exp(-ct/\|u\|_{BMO^\beta(Q_0)})
\end{align*}
for every $t>0$, $u \in BMO^\beta(Q_0)$, $Q\subset Q_0$, and suitable $c_Q \in \mathbb{R}$.  Our proof relies on the establishment of capacitary analogues of standard results in integration theory that may be of independent interest.
 
\end{abstract}

\section{Introduction}
A fundamental result of F. John and L. Nirenberg \cite{JN} asserts that if $Q_0$ is a cube in $\mathbb{R}^d$ and $u \in L^1(Q_0)$ satisfies
\begin{align}\label{bmo_cube}
\sup_{Q \subset Q_0} \frac{1}{|Q|}\int_Q |u-u_Q|\;dx <+\infty,
\end{align}
then the integrand in \eqref{bmo_cube} enjoys an exponential decay estimate of its level sets: There exist constants $c,C>0$ such that
\begin{align}\label{jninequality}
|\{x\in Q:|u(x)-u_Q|>t\}|\leq C|Q| \exp\left(-ct\right)
\end{align}
for all $Q\subset Q_0$.  Here we use $u_Q$ to denote the average value of $u$ over $Q$ and all cubes are assumed to be parallel to $Q_0$.  That \eqref{jninequality} implies \eqref{bmo_cube} is an easy consequence of expressing the Lebesgue integral as integration over the level sets, so that somewhat surprisingly \eqref{bmo_cube} and \eqref{jninequality} are equivalent in characterizing what was subsequently termed the space of functions of bounded mean oscillation in $Q_0$:
\begin{align*}
BMO(Q_0)\vcentcolon= \left\{ u \in L^1(Q_0) :
\|u\|_{BMO(Q_0)}\vcentcolon= \sup_{Q \subset Q_0}  \frac{1}{|Q|} \int_Q |u-u_Q|\;dx<+\infty\right\}.
\end{align*}


Typical examples of functions of bounded mean oscillation include bounded functions, $f \ast \log|\cdot|$ for any $f \in L^1(\mathbb{R}^d)$, and functions in critical Sobolev spaces, though the sense in which any of these examples is generic is itself an interesting question.  Clearly bounded functions admit a significant improvement to \eqref{jninequality}, while the latter examples have special properties of their Fefferman-Stein decomposition (originally established in \cite{Fefferman,FeffermanStein}, with a constructive proof given in \cite{Uchiyama}, special properties demonstrated in \cite{BourgainBrezis2002,GS,GS1}, and discussed further in \cite{Spector-ND}) as well as the well-definedness of restrictions on lower dimensional sets of arbitrarily small dimension with exponential decay of the Hausdorff content of their level sets \cite{Adams1973, MS}.  

The starting point of this paper is the consideration of the exponential decay of the Hausdorff content established in \cite{MS} alongside the John-Nirenberg result on the equivalence of \eqref{bmo_cube} and \eqref{jninequality}.  To illustrate the idea let us consider a simple example, that of $W_0^{1,d}(Q_0)$.  In particular, if $u \in W_0^{1,d}(Q_0)$ with $\|\nabla u\|_{L^d(Q_0)}\leq 1$, it is a consequence of the results in \cite{MS} that such $u$ admits an exponential decay of the $\beta$-dimensional spherical Hausdorff content $\mathcal{H}^\beta_\infty$ of its level sets for any $\beta \in (0,d]$:
\begin{align}\label{Martinez-Spector}
\mathcal{H}^\beta_\infty\left(\{x\in Q_0:|u(x)|>t\}\right) \leq C_\beta \exp(-c_\beta t^{d'})
\end{align}
for some $c_\beta,C_\beta>0$ and all $t>0$, where we recall that
\begin{align*}
\mathcal{H}^{\beta}_{\infty}(E)\vcentcolon= \inf \left\{\sum_{i=1}^\infty \omega_\beta r_i^\beta : E \subset \bigcup_{i=1}^\infty B(x_i,r_i) \right\}
\end{align*}
and $\omega_\beta\vcentcolon= \pi^{\beta/2}/\Gamma(\beta/2+1)$ is a normalization constant.  One is tempted to compare the inequalities \eqref{jninequality} and \eqref{Martinez-Spector}, though as only the former is scale invariant it is more appropriate to find a replacement for the latter which satisfies this property, which for the moment we refer to as \eqref{Martinez-Spector-prime}.  In the identification of the existence of such an inequality the idea of the paper emerges:  The equivalence of \eqref{bmo_cube} and \eqref{jninequality} and the parallel structure of \eqref{jninequality} and this \eqref{Martinez-Spector-prime} prompts one to wonder whether there is a counterpart of \eqref{bmo_cube} involving the $\beta$-dimensional Hausdorff content which is equivalent to \eqref{Martinez-Spector-prime}.  

The main contribution of this paper is the pursuit of these ideas, and in particular, to give a definition of this counterpart \eqref{nondyadic_norm}, the $\beta$-dimensional mean oscillation, such that functions for which this quantity is bounded admit suitable analogues to properties known in the classical theory of functions of bounded mean oscillation.  The relationships we identify can be represented schematically as 
\begin{figure}[H]
\centering
\begin{tikzcd} 
	{\;(1.1)\;} & {\;(1.2)\;} \\
	{\;(1.1')} & {\;(1.2'),}
	\arrow[from=1-1, to=1-2, Leftrightarrow]
	\arrow[from=2-1, to=2-2, Leftrightarrow]
	\arrow[dashed, no head, from=1-2, to=2-2]
	\arrow[dashed, no head, from=1-1, to=2-1]
\end{tikzcd}
\end{figure}
\noindent
while our results comprise the discovery of the existence and equivalence of the bottom portion of the diagram.  A posteriori one finds that \eqref{Martinez-Spector-prime} is given by
\begin{align}\label{Martinez-Spector-prime}
\mathcal{H}^\beta_\infty\left(\{x\in Q:|u(x)-c_Q|>t\}\right) \leq C'_\beta l(Q)^\beta \exp(-c'_\beta t) \tag{\ref{jninequality}$'$}
\end{align}
for some $c'_\beta,C'_\beta>0$, all $t>0$, all $Q \subset Q_0$ parallel to $Q_0$, and suitable $c_Q \in \mathbb{R}$, while to define \eqref{nondyadic_norm}, we first recall a few relevant notions related to $\beta$-dimensional integrability of functions $f : Q_0 \subset \mathbb{R}^d \to \mathbb{R}$.  We say that a function $f$ is $\mathcal{H}^{\beta}_\infty$-quasicontinuous if for every $\epsilon>0$, there exists an open set $O$ such that $\mathcal{H}^{\beta}_\infty(O)<\epsilon$ and $f|_{O^c}$ is continuous.  For a non-negative $\mathcal{H}^{\beta}_\infty$-quasicontinuous function $f$, we define the Choquet integral of $f$ in $Q_0$ via the formula
\begin{align}\label{choquet}
\int_{Q_0} f\;d\mathcal{H}^{\beta}_\infty\vcentcolon= \int_0^\infty \mathcal{H}^{\beta}_\infty\left(\left\{x \in Q_0: f(x)>t\right\}\right)\;dt,
\end{align}
which enables us to define, for a signed $\mathcal{H}^{\beta}_\infty$-quasicontinuous function $f$,
\begin{align}\label{norm}
\|f\|_{L^1(Q_0;\mathcal{H}^{\beta}_\infty)}\vcentcolon= \int_{Q_0} |f|\;d\mathcal{H}^{\beta}_\infty.
\end{align}
With this preparation we now introduce the vector space
\begin{align*}
L^1(Q_0;\mathcal{H}^{\beta}_\infty)\vcentcolon= \left\{ f\; \mathcal{H}^{\beta}_\infty\text{-quasicontinuous } : \|f\|_{L^1(Q_0;\mathcal{H}^{\beta}_\infty)}<+\infty\right\}.
\end{align*}
It can be shown that \eqref{norm} is a quasi-norm, that there exists a norm equivalent to \eqref{norm} such that $L^1(Q_0;\mathcal{H}^{\beta}_\infty)$ equipped with this equivalent norm is a Banach space, that the space of continuous and bounded functions in $Q_0$ for which this equivalent norm (or alternatively \eqref{norm}) is finite is dense in this Banach space, and that the topological dual of this space can be identified with the Morrey space
\begin{align*}
\mathcal{M}^{\beta}(Q_0)\vcentcolon= \left\{ \nu \in M_{loc}(Q_0): \|\nu\|_{\mathcal{M}^\beta} \vcentcolon= \sup_{x \in Q_0,r>0} \frac{|\nu|(B(x,r))}{\;r^\beta}<\infty \right\},
\end{align*}
where $M_{loc}(Q_0)$ is the set of locally finite Radon measures in $Q_0$ (and extended by zero outside $Q_0$), see Section \ref{preliminaries} below.  For the present we only mention one further consequence of these facts, which follows from this duality, the validity of the formula
\begin{align}\label{HB}
\int_{Q_0} |f|\;d\mathcal{H}^{\beta}_\infty \asymp \sup_{\|\nu\|_{\mathcal{M}^\beta} \leq 1}  \left|\int_{Q_0} f \;d\nu \right|
\end{align}
for $f \in L^1(Q_0;\mathcal{H}^{\beta}_\infty)$.

 We are now prepared to define \eqref{nondyadic_norm}:  For a function $u \in L^1(Q_0;\mathcal{H}^{\beta}_\infty)$, we say that $u$ has bounded $\beta$-dimensional mean oscillation if
\begin{align}\label{nondyadic_norm}
 \sup_{Q \subset Q_0} \inf_{c \in \mathbb{R}} \frac{1}{l(Q)^\beta}  \int_{Q}  |u-c| \;d\mathcal{H}^{\beta}_\infty<+\infty,\tag{\ref{bmo_cube}$'$}
\end{align}
where the supremum is taken over all finite subcubes $Q \subset Q_0$ parallel to $Q_0$.  We then define the space of functions of bounded $\beta$-dimensional mean oscillation in $Q_0$:
\begin{align*}
BMO^{\beta}(Q_0)\vcentcolon=  \left\{ u \in L^1(Q_0;\mathcal{H}^{\beta}_\infty) :  \|u\|_{BMO^{\beta}(Q_0)}<+\infty\right\},
\end{align*}
where
\begin{align*}
\|u\|_{BMO^{\beta}(Q_0)} \vcentcolon=\sup_{Q \subset Q_0} \inf_{c \in \mathbb{R}} \frac{1}{l(Q)^\beta}  \int_{Q}  |u-c| \;d\mathcal{H}^{\beta}_\infty.
\end{align*}

As an almost immediate consequences of the definition, let us record two theorems of interest.  First, the space $BMO^\beta(Q_0)$ is closed under composition with Lipschitz functions, a result we record in
\begin{theorem}\label{composition}
Let $\beta \in (0,d]$ and $1\leq p<+\infty$.  If $u \in BMO^\beta(Q_0)$ and $\phi \in \operatorname*{Lip}(\mathbb{R};\mathbb{R})$ with $\phi(0)=0$, then $\phi \circ u \in BMO^\beta(Q_0)$ and 
\begin{align*}
\|\phi \circ u \|_{BMO^{\beta}(Q_0)} \leq \operatorname*{Lip}(\phi) \|u\|_{BMO^{\beta}(Q_0)}.
\end{align*}
\end{theorem}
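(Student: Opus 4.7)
The plan is to imitate the classical $\beta=d$ argument, replacing averages by the $\inf_c$ formulation and Lebesgue integrals by Choquet integrals $d\mathcal{H}^\beta_\infty$. The only property of the integral that the classical proof actually uses is monotonicity, $|f|\leq |g| \Rightarrow \int |f|\leq \int |g|$, and for the Choquet integral this is immediate from the level-set representation \eqref{choquet}, since $\{|f|>t\}\subset\{|g|>t\}$ and $\mathcal{H}^\beta_\infty$ is monotone with respect to set inclusion.

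Before making the main estimate, I would first check that $\phi\circ u$ lies in $L^1(Q_0;\mathcal{H}^\beta_\infty)$. Quasicontinuity is preserved under composition with continuous functions: if $O$ is an open set with $\mathcal{H}^\beta_\infty(O)<\epsilon$ on whose complement $u$ is continuous, then $(\phi\circ u)|_{O^c}=\phi\circ(u|_{O^c})$ is also continuous, since $\phi$ is (Lipschitz, hence) continuous. Integrability then follows from the hypothesis $\phi(0)=0$ combined with the pointwise bound $|\phi(u(x))|\leq \operatorname*{Lip}(\phi)|u(x)|$ and monotonicity of the Choquet integral.

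For the main estimate, fix a finite subcube $Q\subset Q_0$ parallel to $Q_0$ and an arbitrary $c\in\mathbb{R}$. The crucial trick is to choose the matching constant on the $\phi\circ u$ side to be $c'\vcentcolon=\phi(c)$. The Lipschitz bound then yields the pointwise inequality $|\phi(u(x))-c'|\leq \operatorname*{Lip}(\phi)\,|u(x)-c|$, and monotonicity of the Choquet integral gives
\begin{align*}
\inf_{c''\in\mathbb{R}} \int_Q |\phi\circ u-c''|\, d\mathcal{H}^\beta_\infty
  &\leq \int_Q |\phi\circ u-\phi(c)|\, d\mathcal{H}^\beta_\infty \\
  &\leq \operatorname*{Lip}(\phi) \int_Q |u-c|\, d\mathcal{H}^\beta_\infty.
\end{align*}
Since $c$ was arbitrary, I would take the infimum over $c$ on the right, divide both sides by $l(Q)^\beta$, and finally take the supremum over $Q\subset Q_0$ to conclude $\|\phi\circ u\|_{BMO^\beta(Q_0)}\leq \operatorname*{Lip}(\phi)\,\|u\|_{BMO^\beta(Q_0)}$.

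I do not expect a serious obstacle: the only nontrivial ingredients are monotonicity of the Choquet integral and preservation of quasicontinuity under continuous composition, both of which are elementary. The structural advantage of defining the $BMO^\beta$ seminorm via $\inf_c$ rather than through an average over $Q$ is precisely what allows the proof to sidestep the failure of linearity of the Choquet integral, and in particular avoids any reliance on a triangle inequality for the Choquet integral that would be awkward to justify directly.
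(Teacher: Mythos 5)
Your argument is correct and is essentially the same as the paper's: both prove integrability of $\phi\circ u$ from $\phi(0)=0$, preservation of quasicontinuity, and the pointwise bound $|\phi(u)|\leq\operatorname*{Lip}(\phi)|u|$, and both obtain the seminorm bound by comparing against the test constant $\phi(c)$ via $|\phi\circ u-\phi(c)|\leq\operatorname*{Lip}(\phi)|u-c|$ and monotonicity of the Choquet integral. Your explicit remark that only monotonicity (not sublinearity) of the integral is needed here is a nice clarification but does not change the structure of the argument.
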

Second, by the functional property \eqref{HB}, for $\beta=k \in \mathbb{N}$, such functions restrict to $k$-dimensional hyperplanes and are of bounded mean oscillation on these subspaces, a result we record in
\begin{theorem}\label{restriction}
Let $k \in \mathbb{N} \cap [1,d]$.  If $u \in BMO^k(Q_0)$, then for any $k$-dimensional hyperplane $H_k$ parallel to $Q_0$, $ u|_{Q_0\cap H_k} \in BMO(Q_0\cap H_k)$ and
\begin{align*}
\|u|_{Q_0\cap H_k}\|_{BMO(Q_0\cap H_k)} \leq C\|u\|_{BMO^k(Q_0)}.
\end{align*}
In this case $k=d$, one has
\begin{align*}
 \frac{1}{C} \|u\|_{BMO^d(Q_0)} \leq \|u\|_{BMO(Q_0)} \leq C \|u\|_{BMO^d(Q_0)} 
\end{align*}
so that $BMO^d(Q_0) \equiv BMO(Q_0)$.
\end{theorem}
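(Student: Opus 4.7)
The strategy is to reduce everything to a single geometric inequality comparing $k$-dim Lebesgue measure on a $k$-plane $H_k \subset \mathbb{R}^d$ parallel to $Q_0$ with the spherical Hausdorff content $\mathcal{H}^k_\infty$ in $\mathbb{R}^d$. Given a $k$-dim subcube $Q' \subset Q_0 \cap H_k$, my first step is to extend $Q'$ orthogonally to a $d$-dim subcube $Q \subset Q_0$ of the same side length, with $Q \cap H_k = Q'$; this is possible exactly because $H_k$ is parallel to $Q_0$ and $l(Q') \leq l(Q_0)$ leaves room in the $d-k$ orthogonal directions.

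Next I would establish the key comparison $|E|_{H_k} \leq \mathcal{H}^k_\infty(E)$ for every $E \subset H_k$, where $|\cdot|_{H_k}$ denotes $k$-dim Lebesgue (equivalently, Hausdorff) measure on $H_k$. Any ball cover $E \subset \bigcup_i B(x_i, r_i)$ in $\mathbb{R}^d$ induces a cover of $E$ in $H_k$ by the slices $B(x_i, r_i) \cap H_k$, each of which is either empty or a $k$-dim ball of radius $\leq r_i$ and hence of $k$-measure at most $\omega_k r_i^k$; summing and infimizing gives the inequality. Applied level-by-level, together with monotonicity of the content, this yields
\begin{align*}
\int_{Q'} |u-c|\, dx
&= \int_0^\infty |\{x \in Q': |u(x)-c|>t\}|_{H_k}\, dt\\
&\leq \int_0^\infty \mathcal{H}^k_\infty(\{x \in Q: |u(x)-c|>t\})\, dt
= \int_Q |u-c|\, d\mathcal{H}^k_\infty.
\end{align*}
Dividing by $l(Q)^k = |Q'|_{H_k}$, infimizing over $c$, and taking supremum over $Q'$ delivers the restriction estimate. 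For the case $k = d$, one direction is just this restriction applied with $H_d = \mathbb{R}^d$. For the reverse direction, a Vitali covering of any open $U \subset \mathbb{R}^d$ by disjoint balls $B_i \subset U$ with $|U \setminus \bigcup_i B_i|=0$ gives $\mathcal{H}^d_\infty(U) \leq \sum_i \omega_d r_i^d = \sum_i |B_i| \leq |U|$, and outer regularity of Lebesgue measure then extends this to all measurable sets. Combined with the $k=d$ case of the key lemma, this gives $\mathcal{H}^d_\infty \equiv |\cdot|$ on measurable subsets of $\mathbb{R}^d$, so the Choquet and Lebesgue integrals agree on $Q$ and the two seminorms are equivalent.

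The only subtlety I anticipate is confirming that $u|_{Q'}$ is Lebesgue measurable on $Q'$, so that the first line of the chain above is meaningful. This should follow from the $\mathcal{H}^k_\infty$-quasicontinuity of $u$: outside an open set of arbitrarily small content $u$ is continuous, and by the key lemma the intersection of this set with $H_k$ has correspondingly small $k$-dim Lebesgue measure, so $u|_{Q'}$ is continuous off sets of arbitrarily small measure and hence measurable. A more abstract alternative, closer to what the introduction suggests, would apply the duality \eqref{HB} to the admissible measure $\mathcal{H}^k \lfloor Q'$, whose $\mathcal{M}^k$-norm is bounded by precisely the same slicing estimate; the level-set approach above, however, is shorter and avoids invoking duality altogether.
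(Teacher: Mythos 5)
Your argument is correct, but it takes a genuinely different route from the paper's. The paper establishes the restriction estimate by invoking Adams' duality between $L^1(Q_0;\mathcal{H}^k_\infty)$ and the Morrey space $\mathcal{M}^k(Q_0)$: since the measure $\mathcal{H}^k|_{Q_0\cap H_k}$ has bounded $\mathcal{M}^k$-norm, the functional inequality \eqref{HB} immediately bounds its integral against $|u-c_Q|$ by the Choquet integral $\int_Q|u-c_Q|\,d\mathcal{H}^k_\infty$, after which one only needs the standard factor-of-two comparison between the infimum-over-$c$ seminorm and the mean-oscillation seminorm (which the paper carries out explicitly, following John and Nirenberg). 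You instead prove the key measure-theoretic comparison $|E|_{H_k}\leq \mathcal{H}^k_\infty(E)$ directly from the definition by slicing ball covers, and then pass from the set-level bound to the integral-level bound by the layer-cake formula. This is more elementary and self-contained: it bypasses Adams' duality theorem entirely (as you yourself note, the duality route would hinge on exactly the same slicing estimate, just packaged as a bound on $\|\mathcal{H}^k|_{H_k}\|_{\mathcal{M}^k}$), at the cost of being longer than the paper's one-line appeal to \eqref{HB}. Your $k=d$ argument (Vitali covering plus outer regularity to identify $\mathcal{H}^d_\infty$ with Lebesgue measure) likewise gives a direct proof of an equivalence the paper merely cites. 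Your handling of measurability of $u|_{Q'}$ via quasicontinuity and the slicing bound is correct and is a point the paper leaves implicit. The one step you compress is the passage from the infimum-over-$c$ oscillation to $\sup_Q\frac{1}{|Q|}\int_Q|u-u_Q|$; as noted above this is a standard factor-of-$2$ comparison, so it is not a real gap, but it is worth making explicit since the target space $BMO(Q_0\cap H_k)$ is defined with the mean-oscillation seminorm.
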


Theorem \ref{restriction} and the classical argument of John and Nirenberg together imply that $u \in BMO^k(Q_0)$ satisfies \eqref{jninequality} on $k$-hyperplanes.  Our main result is that for any $\beta \in (0,d]$, and for any $\beta$-dimensional set, these spaces satisfy an analogue of \eqref{jninequality}:
\begin{theorem}\label{jn_content}
Let $\beta \in (0,d]$.  There exist constants $c,C>0$ such that
\begin{align}\label{jninequality_content_prime}
\mathcal{H}^{\beta}_\infty\left(\{x\in Q:|u(x)-c_Q|>t\}\right) \leq C l(Q)^\beta \exp(-ct/\|u\|_{BMO^\beta(Q_0)})
\end{align}
for every $t>0$, $u \in BMO^\beta(Q_0)$, all finite subcubes $Q\subset Q_0$ parallel to $Q_0$, and suitable $c_Q \in \mathbb{R}$.
\end{theorem}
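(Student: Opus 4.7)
The plan is to adapt the classical Calderón--Zygmund stopping-time proof of the John--Nirenberg inequality, replacing the Lebesgue integral everywhere by the Choquet integral against $\mathcal{H}^\beta_\infty$. After normalizing $\|u\|_{BMO^\beta(Q_0)} = 1$, fix $Q \subset Q_0$ and a near-optimal constant $c_Q$, and define
$$F(\lambda) := \sup_{Q \subset Q_0} \frac{1}{l(Q)^\beta}\, \mathcal{H}^\beta_\infty\bigl(\{x \in Q : |u(x) - c_Q| > \lambda\}\bigr).$$
The target is a self-improving bound $F(\lambda + a) \leq \theta F(\lambda)$ with $\theta < 1$, which iterates to the exponential estimate \eqref{jninequality_content_prime}.

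For a parameter $b > 1$, select the maximal dyadic subcubes $\{Q_j\}$ of $Q$ on which the Choquet average $\frac{1}{l(Q_j)^\beta}\int_{Q_j}|u - c_Q|\, d\mathcal{H}^\beta_\infty$ exceeds $b$. By the stopping condition on the dyadic parent and monotonicity of the Choquet integral, each $Q_j$ satisfies the reverse bound $\int_{Q_j}|u - c_Q|\, d\mathcal{H}^\beta_\infty \leq 2^\beta b\, l(Q_j)^\beta$, which combined with a Choquet-style triangle inequality and the defining optimality of the local constant $c_{Q_j}$ yields $|c_{Q_j} - c_Q| \leq Cb$. A capacitary analogue of the Lebesgue differentiation theorem gives $|u - c_Q| \leq Cb$ $\mathcal{H}^\beta_\infty$-a.e.\ on $Q \setminus \bigcup_j Q_j$. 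The main obstacle is the packing estimate
$$\sum_j l(Q_j)^\beta \leq \frac{C}{b}\, l(Q)^\beta,$$
since $\mathcal{H}^\beta_\infty$ is only subadditive -- the naive argument of summing $b\, l(Q_j)^\beta < \int_{Q_j} |u - c_Q|\, d\mathcal{H}^\beta_\infty$ and bounding the right-hand side by $\int_Q|u - c_Q|\, d\mathcal{H}^\beta_\infty$ goes in the wrong direction. The plan here is to exploit the Morrey duality \eqref{HB}: for each fixed test measure $\mu$ with $\|\mu\|_{\mathcal{M}^\beta}\leq 1$ one has $\int_Q |u - c_Q|\, d\mu \leq C\, l(Q)^\beta$, and running the Calderón--Zygmund selection against the genuine, additive measure $\mu$ produces a $\mu$-version of the packing estimate (and, upon iteration, of the exponential inequality) with constants uniform in $\mu$; taking the supremum over $\mu$ and invoking \eqref{HB} recovers the statement in terms of $\mathcal{H}^\beta_\infty$.

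With these ingredients in hand, the iteration closes. For $\lambda$ exceeding a fixed multiple of $b$, the a.e.\ bound places the superlevel set (modulo $\mathcal{H}^\beta_\infty$-nullity) inside $\bigcup_j Q_j$, and on each $Q_j$ the inequality $|u - c_Q| > \lambda$ forces $|u - c_{Q_j}| > \lambda - Cb$; subadditivity of $\mathcal{H}^\beta_\infty$ together with the packing estimate then give
$$F(\lambda) \leq \frac{C}{b}\, F(\lambda - Cb).$$
Choosing $b$ large enough that $C/b \leq 1/e$ and iterating produces the exponential decay in \eqref{jninequality_content_prime}, with constants depending only on $\beta$ and $d$ once one undoes the normalization. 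The bulk of the technical work -- and precisely the \emph{capacitary analogues of standard results in integration theory} advertised in the abstract -- lies in establishing the packing estimate via Morrey duality, the capacitary Lebesgue-differentiation statement, and the monotonicity and triangle inequalities for the Choquet integral; granted these, the iteration is a routine adaptation of the classical John--Nirenberg scheme.
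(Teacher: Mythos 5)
Your skeleton is the right one and matches the paper's: normalize, do a stopping-time decomposition, show the stopped cubes satisfy a packing estimate, bound the jump in the local constant by $Cb$ on each stopped cube, control the complement $\mathcal{H}^\beta$-a.e.\ via a capacitary differentiation theorem, and iterate the resulting functional inequality to exponential decay (the paper does this through Theorem~\ref{cz}, Corollary~\ref{differentialhausdorfflimsup}, and Lemma~\ref{exponentialestimate}). You also correctly identify the one place the classical argument breaks: subadditivity of the Choquet integral runs the wrong way for the packing estimate $\sum_j l(Q_j)^\beta \lesssim l(Q)^\beta/b$.

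The gap is in your proposed repair. Running the Calder\'on--Zygmund selection against a fixed test measure $\mu$ with $\|\mu\|_{\mathcal{M}^\beta}\leq 1$ destroys the other half of the argument, namely the bound $|c_{Q_j}-c_Q|\leq Cb$. In the Choquet version this bound relies crucially on the identity $\mathcal{H}^{\beta,Q'}_\infty(Q') = l(Q')^\beta$ for a dyadic cube $Q'$, so that dividing $\int_{Q'}|c_{Q'}-c_Q|\,d\mathcal{H}^{\beta,Q'}_\infty$ by $l(Q')^\beta$ simply returns $|c_{Q'}-c_Q|$. For a $\mu$-based decomposition one only gets $|c_{Q_j}-c_Q|\,\mu(Q_j)\lesssim b\,l(Q_j)^\beta$, and since $\mu(Q_j)/l(Q_j)^\beta$ can be arbitrarily small (it is bounded above by $\|\mu\|_{\mathcal{M}^\beta}$ but has no lower bound) this gives no control on $|c_{Q_j}-c_Q|$. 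The iteration therefore does not close. Moreover, the stopped cubes, the exceptional null set from the differentiation step, and the local constants would all depend on $\mu$, so ``take the supremum over $\mu$ and invoke \eqref{HB}'' is not a single clean operation at the end---it would have to be threaded through every step of the induction, and \eqref{HB} is an equivalence for Choquet integrals of functions, not a statement relating $\sup_\mu\mu(E)$ to $\mathcal{H}^\beta_\infty(E)$ at the level of iterated level sets of $\mu$-dependent local oscillations.

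What the paper actually does to fix the packing issue is different and essentially combinatorial: it appeals to a covering lemma of Orobitg--Verdera (attributed to Mel'nikov), recorded as Proposition~\ref{ov}. From the Calder\'on--Zygmund family $\{Q_j\}$, this extracts a subfamily $\{Q_{j_k}\}$ together with a family of non-overlapping dyadic ancestors $\{\tilde Q_k\}$ such that (i) $\{Q_{j_k}\}\cup\{\tilde Q_k\}$ still covers $\bigcup_j Q_j$, (ii) the $\{Q_{j_k}\}$ satisfy the packing condition $\sum_{Q_{j_k}\subset Q'} l(Q_{j_k})^\beta\leq 2 l(Q')^\beta$ for every dyadic $Q'$, and (iii) $l(\tilde Q_k)^\beta\leq\sum_{Q_{j_i}\subset\tilde Q_k}l(Q_{j_i})^\beta$. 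The packing condition is exactly what delivers quasi-additivity of the Choquet integral along the subfamily, $\sum_k\int_{Q_{j_k}}f\,d\mathcal{H}^{\beta,Q}_\infty\leq C'\int_{\bigcup Q_{j_k}}f\,d\mathcal{H}^{\beta,Q}_\infty$, and this is what makes the chain $\sum_k l(Q_{j_k})^\beta<\frac{1}{s}\sum_k\int_{Q_{j_k}}|u|\,d\mathcal{H}^{\beta,Q}_\infty\leq\frac{C'}{s}\int_Q|u|\,d\mathcal{H}^{\beta,Q}_\infty$ legitimate while staying entirely inside the Choquet world, where the local-constant bound is also available. This covering lemma---not Morrey duality---is the missing ingredient in your proposal.
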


Our proof of Theorem \ref{jn_content} follows the argument of John and Nirenberg, with suitable modifications necessitated by the replacement of the Lebesgue measure with the Hausdorff content, which is not a measure but only an outer measure with some additional properties.  More precisely, the actual arguments in our proof are via a more convenient, though equivalent (see Section \ref{preliminaries}), quantity, the dyadic Hausdorff content adapted to any given cube $Q \subset Q_0$.  This requires several results for this object that may be of independent interest, including a weak form of the Lebesgue differentiation theorem we prove in Corollary \ref{differentialhausdorfflimsup} and a Calder\'on-Zygmund decomposition we establish in Theorem \ref{cz}, see Section \ref{content-jn} for further details.  These results parallel the standard ones for measures and make use of the fact that the Hausdorff content is in a certain sense doubling.  There are some difficulties, however, due to the nonlinearity of the Choquet integral, which we handle through the use of an additional covering lemma, see Proposition \ref{ov} in Section \ref{preliminaries} below.

Theorem \ref{jn_content} has a number of interesting consequences.  First, a standard application of the inequality \eqref{jninequality_content_prime} is an exponential integrability result.  In our setting, we have the following corollary which asserts that functions in $BMO^\beta(Q_0)$ are exponentially integrable on sets of dimension $\beta$.
\begin{corollary}\label{exp_integrability}
Let $\beta \in (0,d]$.  There exist constants $c',C'>0$ such that
\begin{align*}
\int_Q \exp\left(c'|u-c_Q|\right) \;d\mathcal{H}^{\beta}_\infty \leq C'l(Q)^\beta
\end{align*}
for every $u \in BMO^\beta(Q_0)$ with $\|u\|_{BMO^\beta(Q_0)}\leq 1$ and for all finite subcubes $Q\subset Q_0$ parallel to $Q$ and suitable $c_Q$.
\end{corollary}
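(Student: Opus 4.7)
The approach is standard: express the Choquet integral through the layer cake formula \eqref{choquet} and then substitute the exponential decay furnished by Theorem \ref{jn_content}. A preliminary check is that $f \vcentcolon= \exp(c'|u - c_Q|)$ is $\mathcal{H}^{\beta}_\infty$-quasicontinuous; this follows from the quasi-continuity of $u$ and the continuity of $x \mapsto \exp(c'|x - c_Q|)$, since quasi-continuity is preserved under composition with continuous maps, so the Choquet integral of $f$ is unambiguously defined.

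First I would invoke the definition
\[
\int_Q f \, d\mathcal{H}^{\beta}_\infty = \int_0^\infty \mathcal{H}^{\beta}_\infty\left(\{x \in Q : f(x) > s\}\right) \, ds
\]
and split the integration at $s = 1$. Since $f \geq 1$ pointwise, the super-level set for $0 \leq s < 1$ is all of $Q$, so this portion contributes at most $C_\beta l(Q)^\beta$ via the standard estimate $\mathcal{H}^{\beta}_\infty(Q) \leq C_\beta l(Q)^\beta$ valid for any cube in $\mathbb{R}^d$ with $\beta \leq d$. On $[1, \infty)$, I would rewrite $\{f > s\} = \{|u - c_Q| > \log(s)/c'\}$ and change variables $t = \log(s)/c'$, so that $ds = c' e^{c' t} \, dt$, recasting the tail as
\[
c' \int_0^\infty \mathcal{H}^{\beta}_\infty\left(\{|u - c_Q| > t\}\right) e^{c' t} \, dt.
\]

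Plugging in the John-Nirenberg bound $\mathcal{H}^{\beta}_\infty(\{|u - c_Q| > t\}) \leq C l(Q)^\beta e^{-ct}$ from Theorem \ref{jn_content} (with the same $c_Q$ and under the normalization $\|u\|_{BMO^\beta(Q_0)} \leq 1$) reduces the tail to $C c' l(Q)^\beta \int_0^\infty e^{(c' - c) t} \, dt$, which is finite precisely when $c' < c$. Fixing any such $c'$ and combining the two contributions yields the desired estimate with $C' = C_\beta + C c' / (c - c')$. I do not expect a genuine obstacle: once Theorem \ref{jn_content} is available, the corollary reduces to a one-line layer cake computation together with the choice of $c'$ strictly below the John-Nirenberg exponent $c$.
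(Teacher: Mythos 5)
Your proposal is correct and follows essentially the same route as the paper: write the Choquet integral via the layer cake formula, split at height $1$, bound the low range by $\mathcal{H}^{\beta}_{\infty}(Q) \lesssim l(Q)^{\beta}$, and control the tail by plugging in the exponential decay from Theorem \ref{jn_content}, choosing $c' < c$ so the resulting integral converges. The only superficial difference is that you perform the change of variables $t = \log(s)/c'$, whereas the paper keeps the integral in the original $s$-variable and integrates $t^{-c/c'}$ directly; the resulting constants agree up to the harmless normalization in bounding $\mathcal{H}^{\beta}_{\infty}(Q)$.
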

Second, a classical result for the space of functions of bounded mean oscillation is the equivalence of a family of semi-norms indexed by $p \in [1,\infty)$.  A similar result for the space of functions of bounded $\beta$-dimensional mean oscillation follows easily from Theorem \ref{jn_content}  or Corollary \ref{exp_integrability}.  That is, if we introduce
\begin{align*}
\|u\|_{BMO^{\beta,p}(Q_0)}\vcentcolon=  \sup_{Q \subset Q_0} \inf_{c \in \mathbb{R}} \left(\frac{1}{l(Q)^\beta} \int_{Q}  |u-c|^p d\mathcal{H}^{\beta}_\infty\right)^{1/p},
\end{align*}
we will show
\begin{corollary} \label{pseminorms}
Let $\beta \in (0,d]$.  There exists a constant $C=C(\beta)>0$ such that
\begin{align*}
\frac{1}{C}\|u\|_{BMO^{\beta}(Q_0)} \leq \|u\|_{BMO^{\beta,p}(Q_0)} \leq C p\|u\|_{BMO^{\beta}(Q_0)}
\end{align*}
for all $u \in BMO^\beta(Q_0)$.
\end{corollary}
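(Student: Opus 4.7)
The plan is to prove the two inequalities of Corollary \ref{pseminorms} separately. The left-hand bound $\|u\|_{BMO^\beta(Q_0)} \leq C\|u\|_{BMO^{\beta,p}(Q_0)}$ will follow from a H\"older-type inequality for the Choquet integral made available through the duality formula \eqref{HB}, while the right-hand bound $\|u\|_{BMO^{\beta,p}(Q_0)} \leq Cp\,\|u\|_{BMO^\beta(Q_0)}$ will follow from the John--Nirenberg estimate (Theorem \ref{jn_content}) combined with the layer-cake representation \eqref{choquet}.

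For the left inequality, fix a subcube $Q \subset Q_0$ and $c \in \mathbb{R}$. By \eqref{HB} one has
\begin{align*}
\int_Q |u-c|\, d\mathcal{H}^\beta_\infty \leq C \sup_{\|\nu\|_{\mathcal{M}^\beta}\leq 1} \left|\int_Q (u-c)\, d\nu\right|,
\end{align*}
and for each admissible $\nu$ the classical H\"older inequality applied to the positive variation measure $|\nu|$ yields
\begin{align*}
\left|\int_Q (u-c)\, d\nu\right| \leq \int_Q |u-c|\, d|\nu| \leq \left(\int_Q |u-c|^p\, d|\nu|\right)^{1/p} |\nu|(Q)^{1/p'}.
\end{align*}
Since $Q$ lies in a ball of radius comparable to $l(Q)$, the Morrey condition gives $|\nu|(Q) \leq C l(Q)^\beta$, and since $|u-c|^p \geq 0$ and $\||\nu|\|_{\mathcal{M}^\beta} = \|\nu\|_{\mathcal{M}^\beta} \leq 1$, a second application of \eqref{HB} gives $\int_Q |u-c|^p\, d|\nu| \leq C \int_Q |u-c|^p\, d\mathcal{H}^\beta_\infty$. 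Combining these estimates, dividing by $l(Q)^\beta = l(Q)^{\beta/p+\beta/p'}$, and taking $\inf_c$ followed by $\sup_Q$ yields the desired inequality.

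For the right inequality, by homogeneity we may assume $\|u\|_{BMO^\beta(Q_0)} = 1$. For any subcube $Q \subset Q_0$, let $c_Q$ be the constant supplied by Theorem \ref{jn_content}, so that $\mathcal{H}^\beta_\infty(\{x \in Q : |u-c_Q|>t\}) \leq C l(Q)^\beta e^{-ct}$. The definition \eqref{choquet} of the Choquet integral applied to $|u-c_Q|^p$, together with the change of variable $s = t^p$, gives
\begin{align*}
\int_Q |u-c_Q|^p\, d\mathcal{H}^\beta_\infty = p \int_0^\infty t^{p-1}\, \mathcal{H}^\beta_\infty\!\left(\left\{x \in Q : |u(x)-c_Q| > t\right\}\right) dt \leq C l(Q)^\beta \frac{\Gamma(p+1)}{c^p}.
\end{align*}
Dividing by $l(Q)^\beta$, extracting the $p$-th root, and invoking the Stirling-type bound $\Gamma(p+1)^{1/p} \leq C' p$ yields $\|u\|_{BMO^{\beta,p}(Q_0)} \leq C'' p$, and scaling recovers the claim at general $\|u\|_{BMO^\beta(Q_0)}$. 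No serious obstacle is anticipated in this plan; the only mildly delicate point is the passage between signed and nonnegative Morrey measures in the duality step of the left inequality, which is handled by the identity $\||\nu|\|_{\mathcal{M}^\beta} = \|\nu\|_{\mathcal{M}^\beta}$ and the trivial estimate $|\int f\, d\nu| \leq \int |f|\, d|\nu|$.
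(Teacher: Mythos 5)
Your proof is correct, and both halves differ from the paper's route in detail. For the lower bound the paper invokes a pseudo-H\"older inequality for Choquet integrals (cited from an external reference), applied directly to $|u-c|\cdot\chi_Q$, whereas you pass through the duality \eqref{HB}, apply the classical H\"older inequality against the Morrey measure $|\nu|$, and then dualize back; this is a legitimate alternative which has the advantage of being self-contained given the material already in Section \ref{preliminaries}, at the modest cost of invoking the duality formula twice. For the upper bound the paper first proves Corollary \ref{exp_integrability} and then extracts the $p$-th power term from the exponential via monotonicity and $(p!)^{1/p}\leq p$, while you bypass the exponential-integrability corollary and integrate the John--Nirenberg tail bound of Theorem \ref{jn_content} against $p\,t^{p-1}\,dt$ directly, producing $\Gamma(p+1)/c^p$ and using a Stirling bound. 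The two arguments are essentially the same computation — Corollary \ref{exp_integrability} is itself obtained by exactly such a layer-cake integration — so you have in effect folded the two steps into one; your version also handles non-integer $p$ without the slight awkwardness of the paper's Taylor-expansion phrasing. One small point worth making explicit: in the second application of \eqref{HB} you need $|u-c|^p \in L^1(Q;\mathcal{H}^\beta_\infty)$, which is not part of the hypothesis $u \in BMO^\beta(Q_0)$; this is supplied precisely by the upper bound you prove in the other half, so the logical order should be to establish $\|u\|_{BMO^{\beta,p}} \leq Cp\|u\|_{BMO^\beta}$ first (giving finiteness of $\int_Q|u-c_Q|^p\,d\mathcal{H}^\beta_\infty$ and hence, after a triangle-inequality argument, of $\int_Q|u-c|^p\,d\mathcal{H}^\beta_\infty$ for all $c$) and only then run the duality argument for the reverse bound.
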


Third, from Theorem \ref{jn_content} one obtains a nesting of the spaces, a result we record in
\begin{corollary}\label{nesting}
Let $0<\alpha \leq \beta \leq d$ and suppose $u \in BMO^\alpha(Q_0)$.  Then $u \in BMO^\beta(Q_0)$ and
\begin{align*}
\|u\|_{BMO^\beta(Q_0)} \leq C\|u\|_{BMO^\alpha(Q_0)}
\end{align*}
for a numerical constant $C=C(\alpha,\beta)>0$ independent of $u$.
\end{corollary}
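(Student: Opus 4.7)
The plan is to deduce Corollary \ref{nesting} directly from Theorem \ref{jn_content} together with a simple dimensional comparison between the $\alpha$- and $\beta$-dimensional Hausdorff contents on a fixed cube. The key geometric lemma I would first isolate is that for any $E \subset Q$ with $Q$ a cube of side $l(Q)$ and any $0 < \alpha \leq \beta \leq d$, one has
\begin{equation*}
\mathcal{H}^\beta_\infty(E) \leq C(\alpha,\beta,d)\, l(Q)^{\beta-\alpha}\, \mathcal{H}^\alpha_\infty(E).
\end{equation*}
To prove this, I would argue that the infimum defining $\mathcal{H}^\alpha_\infty(E)$ is unchanged if one restricts to coverings $\{B(x_i,r_i)\}$ with $r_i \leq \sqrt{d}\, l(Q)$: any ball of larger radius in a covering can be replaced by a single ball of radius $\sqrt{d}\,l(Q)$ centered in $Q$ (which already covers all of $E$), without increasing the sum $\sum \omega_\alpha r_i^\alpha$. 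Once the radii are bounded by $\sqrt{d}\,l(Q)$, the estimate $r_i^\beta \leq (\sqrt{d}\,l(Q))^{\beta-\alpha} r_i^\alpha$ applied termwise yields the claim.

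Given this comparison, I would fix a cube $Q \subset Q_0$ and let $c_Q$ be the constant furnished by Theorem \ref{jn_content} applied in $BMO^\alpha$, so that
\begin{equation*}
\mathcal{H}^\alpha_\infty\left(\{x \in Q : |u(x) - c_Q| > t\}\right) \leq C_1\, l(Q)^\alpha \exp\!\left(-c_1 t / \|u\|_{BMO^\alpha(Q_0)}\right)
\end{equation*}
for all $t > 0$. Using the layer-cake formula \eqref{choquet} together with the geometric comparison above (the level sets lie in $Q$), I estimate
\begin{equation*}
\int_Q |u - c_Q|\, d\mathcal{H}^\beta_\infty \leq C\, l(Q)^{\beta - \alpha} \int_0^\infty \mathcal{H}^\alpha_\infty\!\left(\{x\in Q : |u - c_Q| > t\}\right)\, dt,
\end{equation*}
and then substitute the exponential bound and integrate in $t$ to obtain the required
\begin{equation*}
\frac{1}{l(Q)^\beta} \int_Q |u - c_Q|\, d\mathcal{H}^\beta_\infty \leq C'\, \|u\|_{BMO^\alpha(Q_0)},
\end{equation*}
where $C' = C'(\alpha,\beta,d)$. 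Taking the supremum over $Q$ and infimum over $c$ gives $\|u\|_{BMO^\beta(Q_0)} \leq C'\|u\|_{BMO^\alpha(Q_0)}$. The same layer-cake argument, applied on $Q_0$ itself, also yields $u \in L^1(Q_0;\mathcal{H}^\beta_\infty)$, since $\mathcal{H}^\alpha_\infty$-quasicontinuity transfers to $\mathcal{H}^\beta_\infty$-quasicontinuity by the comparison above.

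The only mildly technical step is the covering-radii truncation in the geometric lemma; the rest is bookkeeping on top of Theorem \ref{jn_content}. I do not anticipate a genuine obstacle, only the need to be careful that the constant $C'$ depends on $\alpha$, $\beta$, and $d$ but not on $u$ or on the particular cube $Q$.
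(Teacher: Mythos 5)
Your proof is correct, but it uses a genuinely different key lemma than the paper's. The paper deduces the result by integrating the nonlinear comparison $\mathcal{H}^\beta_\infty \leq C\bigl(\mathcal{H}^\alpha_\infty\bigr)^{\beta/\alpha}$ (which holds for arbitrary sets, via $\sum r_i^\beta \leq (\sum r_i^\alpha)^{\beta/\alpha}$) against the exponential decay of Theorem \ref{jn_content}; the exponential is essential there, since $\bigl(l(Q)^\alpha e^{-ct}\bigr)^{\beta/\alpha}$ must be integrable in $t$. You instead establish the linear comparison $\mathcal{H}^\beta_\infty(E) \leq C\, l(Q)^{\beta-\alpha}\,\mathcal{H}^\alpha_\infty(E)$ for $E \subset Q$, via the radius-truncation argument (which is valid, modulo that the covering ball's radius should be $\sqrt{d}\,l(Q)/2$ rather than $\sqrt{d}\,l(Q)$, an immaterial constant). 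This lemma is a special case of the paper's one on bounded sets (since $\mathcal{H}^\alpha_\infty(E) \lesssim l(Q)^\alpha$ there), but it is simpler to prove and, notably, it makes Theorem \ref{jn_content} superfluous for this corollary: once you write
\begin{align*}
\int_Q |u - c_Q|\, d\mathcal{H}^\beta_\infty \leq C\, l(Q)^{\beta-\alpha} \int_0^\infty \mathcal{H}^\alpha_\infty\bigl(\{|u - c_Q| > t\}\bigr)\, dt,
\end{align*}
the right-hand integral \emph{is} $\int_Q |u - c_Q|\, d\mathcal{H}^\alpha_\infty \leq l(Q)^\alpha \|u\|_{BMO^\alpha(Q_0)}$ directly from the definition of the $BMO^\alpha$ seminorm, so substituting the exponential bound is an unnecessary detour. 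In short: your route is correct and actually a bit more elementary; the paper's route is slightly slicker and keeps the corollary visibly downstream of the John--Nirenberg inequality. The $L^1(Q_0;\mathcal{H}^\beta_\infty)$ membership and the transfer of quasicontinuity are handled correctly, since $Q_0$ is a fixed finite cube so the bounded-set comparison applies throughout.
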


\begin{remark}
The preceding results imply that functions in $BMO^\alpha(Q_0)$ are exponentially integrable on sets of every dimension $\beta \in [\alpha,d]$ and that for $k \in \mathbb{N}$, $k \geq \beta$, such functions restrict to $k$-dimensional hyperplanes and are of bounded mean oscillation in these hyperplanes with respect to the natural measure.  This gives another perspective of the question of restriction of functions of bounded mean oscillation addressed in \cite{Nakai,KSS}.
\end{remark}

Finally, we return to the impetus for the initial consideration of these quantities, the question of improvements to critical Sobolev embeddings studied in \cite{MS}.  An interesting case which was not treated there is the Morrey space $\mathcal{M}^{d-\alpha}(\mathbb{R}^d)$.  This space contains $L^{d/\alpha,\infty}(\mathbb{R}^d)$ (the weak $L^{d/\alpha}(\mathbb{R}^d)$ space), so that one can obtain no better than the linear exponential decay with respect to the Hausdorff content of suitable dimension obtained for this space in \cite{MS}.  That one has this linear exponential decay follows from Theorem \ref{jn_content} and the following result, which refines D.R. Adams' $BMO$ estimate in \cite{Adams1975}.
\begin{theorem}\label{morrey}
For $\alpha \in (0,d)$ and $\epsilon \in(0, \alpha]$, there exists a constant $C=C(\alpha,\epsilon)>0$ such that
\begin{align*}
\|I_\alpha \mu \|_{BMO^{d-\alpha+\epsilon}(\mathbb{R}^d)} \leq C \|\mu\|_{\mathcal{M}^{d-\alpha}(\mathbb{R}^d)}
\end{align*}
for all $\mu \in \mathcal{M}^{d-\alpha}(\mathbb{R}^d)$ such that $I_\alpha \mu \in L^1(K,\mathcal{H}^{d-\alpha+\epsilon}_\infty)$ for all compact sets $K \subset \mathbb{R}^d$.
\end{theorem}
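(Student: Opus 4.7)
The plan is to adapt the classical decomposition argument underlying Adams' $BMO$ bound, using the Morrey duality \eqref{HB} to handle the local part at the $\beta$-dimensional scale and a standard kernel-difference estimate for the tail. Fix a cube $Q \subset \mathbb{R}^d$ with center $x_Q$, split $\mu = \mu_1 + \mu_2$ with $\mu_1 \vcentcolon= \mu\chi_{2Q}$ and $\mu_2 \vcentcolon= \mu\chi_{(2Q)^c}$, and choose the constant $c_Q \vcentcolon= I_\alpha\mu_2(x_Q)$. Since \eqref{HB} identifies the Choquet $L^1$-quasi-norm with (a constant times) a genuine norm, the quantity $\int_Q |I_\alpha\mu - c_Q|\,d\mathcal{H}^\beta_\infty$ is controlled up to a constant by the sum of $\int_Q |I_\alpha\mu_1|\,d\mathcal{H}^\beta_\infty$ and $\int_Q |I_\alpha\mu_2 - c_Q|\,d\mathcal{H}^\beta_\infty$, and I would estimate the two pieces separately.

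For the local contribution I would apply \eqref{HB} to rewrite
\begin{equation*}
\int_Q |I_\alpha\mu_1|\,d\mathcal{H}^\beta_\infty \;\asymp\; \sup_{\|\nu\|_{\mathcal{M}^\beta}\leq 1} \left|\int I_\alpha \mu_1 \,d\nu\right|,
\end{equation*}
and then use Tonelli's theorem to replace the inner integral by $\int I_\alpha\nu\,d|\mu_1|$. For a test measure $\nu \in \mathcal{M}^\beta$ whose support may, after restriction, be taken inside $Q$, and any $y \in 2Q$, a dyadic decomposition of the Riesz kernel yields
\begin{equation*}
I_\alpha\nu(y) \leq C\sum_{2^k \lesssim l(Q)} 2^{k(\alpha-d)}\,\nu(B(y,2^{k+1})) \leq C \sum_{2^k \lesssim l(Q)} 2^{k\epsilon} \leq C\,l(Q)^\epsilon,
\end{equation*}
the decisive point being that the geometric series converges because $\epsilon = \beta-(d-\alpha) > 0$. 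Pairing the resulting $L^\infty(2Q)$ bound with the total-variation estimate $|\mu|(2Q) \leq C\,l(Q)^{d-\alpha}\|\mu\|_{\mathcal{M}^{d-\alpha}}$ produces the claimed local bound $C\,l(Q)^\beta \|\mu\|_{\mathcal{M}^{d-\alpha}}$.

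For the tail contribution, the standard mean-value estimate on the Riesz kernel gives
\begin{equation*}
|I_\alpha\mu_2(x) - I_\alpha\mu_2(x_Q)| \leq C\, l(Q)\int_{(2Q)^c} \frac{d|\mu|(y)}{|x_Q-y|^{d-\alpha+1}}
\end{equation*}
for every $x \in Q$, and a dyadic decomposition into annuli centered at $x_Q$ combined with the Morrey bound $|\mu|(B(x_Q,r)) \leq r^{d-\alpha}\|\mu\|_{\mathcal{M}^{d-\alpha}}$ gives a telescoping sum that produces $\|I_\alpha\mu_2 - c_Q\|_{L^\infty(Q)} \leq C\|\mu\|_{\mathcal{M}^{d-\alpha}}$. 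Integrating against $\mathcal{H}^\beta_\infty$ on $Q$ and using $\mathcal{H}^\beta_\infty(Q) \lesssim l(Q)^\beta$ completes the tail estimate; combining with the local bound, dividing by $l(Q)^\beta$, and taking the supremum over subcubes yields the theorem. The main obstacle is the local estimate: beyond the technical point of verifying that the standing quasi-continuity hypothesis on $I_\alpha\mu$ legitimizes the use of \eqref{HB} on $Q$, the entire argument rests on the strict positivity of $\epsilon$, which is exactly what suppresses the logarithmic divergence that forced Adams' original statement to halt at $BMO$ rather than at $L^\infty$.
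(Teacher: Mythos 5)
Your proposal follows the same route as the paper's proof: the $2Q$-splitting $\mu=\mu\chi_{2Q}+\mu\chi_{(2Q)^c}$, the use of the duality \eqref{HB} to reduce the Choquet estimate to a pairing against an arbitrary test measure $\nu\in\mathcal M^{d-\alpha+\epsilon}$, Tonelli plus a dyadic decomposition of the kernel for the local piece (where the strict positivity of $\epsilon$ is exactly what makes the geometric series sum to $l(Q)^\epsilon$), and the mean-value kernel-difference estimate with dyadic annuli for the tail. The only deviation is cosmetic: you fix $c_Q=I_\alpha\mu_2(x_Q)$ at the center of $Q$, whereas the paper selects $x_0\in Q$ at which $I_\alpha|\mu|(x_0)<\infty$ so that the constant is manifestly finite; since your tail estimate shows the oscillation of $I_\alpha\mu_2$ over $Q$ is bounded, finiteness at one point gives finiteness at $x_Q$ as well, so this is a harmless difference once one invokes the standing hypothesis that $I_\alpha\mu\in L^1(K,\mathcal H^{d-\alpha+\epsilon}_\infty)$ for all compact $K$.
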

\noindent
Here we recall the definition of the Riesz potential of order $\alpha \in (0,d)$ of a measure $\mu$ is
\begin{align*}
I_\alpha \mu(x)\vcentcolon= \frac{1}{\gamma(\alpha)} \int_{\mathbb{R}^d} \frac{d\mu(y)}{|x-y|^{d-\alpha}}
\end{align*}
where $\gamma(\alpha)$ is a normalization constant \cite[p.~117]{S}.   

The local capacitary integrability assumption $I_\alpha \mu \in L^1(K,\mathcal{H}^{d-\alpha+\epsilon}_\infty)$ for all compact sets $K \subset \mathbb{R}^d$ is analogous to D.R. Adams' local Lebesgue integrability assumption \cite[Proposition 3.3 on p.~770]{Adams1975}, without which it is clear that one cannot obtain $I_\alpha \mu \in BMO^{d-\alpha+\epsilon}(\mathbb{R}^d)$, as this space is locally is a subspace of  $L^1(K,\mathcal{H}^{d-\alpha+\epsilon}_\infty)$.  However, one wonders whether it is necessary to assume this or if it can be obtained from the other hypotheses, which we here ask as
\begin{openquestion}
Suppose $\mu \in \mathcal{M}^{d-\alpha}(\mathbb{R}^d)$ and $\epsilon \in (0,\alpha]$.  Is it true that $I_\alpha \mu \in L^1(K,\mathcal{H}^{d-\alpha+\epsilon}_\infty)$ for all compact sets $K \subset \mathbb{R}^d$?
\end{openquestion}
\noindent
The answer is no when $\epsilon=0$, as we demonstrate in the following examples.  First, in the simpler case $\alpha=k \in \mathbb{N}$, with the choice $\mu=\mathcal{H}^{d-k}|_S$ for a portion of a $d-k$-dimensional hypersurface $S$, one finds that $|I_k \mu|$ behaves locally like $|\log(d(x,S))|$ where $d(x,S)$ is the distance to the surface (see \cite{Spector-PM} for the case $\alpha=1$ and \cite{RSS} for the case $\alpha=d-1$, analogous estimates can be made for the other cases).  In these cases $|I_k \mu|=+\infty$ on the $d-k$-dimensional hypersurface $S$ and therefore $I_\alpha \mu \notin L^1(S,\mathcal{H}^{d-k}_\infty)$.  When $\alpha$ is not an integer, the argument is slightly more involved.  In particular, fixing $d-\alpha \notin \mathbb{N}$ one can apply a construction of J. Hutchinson \cite{Hutchinson} to generate a $d-\alpha$ regular measure $\mu$ supported in a cube $Q_0$, i.e. $\mu$ which satisfies
\begin{align}\label{measure_density}
c \leq \liminf_{l(Q) \to 0} \frac{\mu(Q)}{l(Q)^{d-\alpha}} \leq \limsup_{l(Q) \to 0} \frac{\mu(Q)}{l(Q)^{d-\alpha}} \leq C
\end{align}
for cubes $Q$ centered at $x$, for $\mu$ almost every $x \in \mathbb{R}^d$.  We claim $I_\alpha \mu \notin L^1(Q_0;\mathcal{H}^{d-\alpha}_\infty)$.  Indeed, if this were the case one would have
\begin{align*}
+\infty>\int_{Q_0} |I_\alpha \mu| d\mathcal{H}^{d-\alpha}_\infty &\geq c\int_{Q_0} I_\alpha \mu(x) d\mu(x) \\
&= c'\int_{Q_0} \int_{\mathbb{R}^d} \frac{1}{|x-y|^{d-\alpha}} d\mu(y) d\mu(x).
\end{align*}
However, this would imply that the $(d-\alpha)$-dimensional Riesz transform
\begin{align*}
 \lim_{\epsilon \to 0}\int_{\mathbb{R}^d \setminus B(x,\epsilon)} \frac{x-y}{|x-y|^{d-\alpha+1}} d\mu(y)
\end{align*}
exists and is finite for $\mu$ almost every $x \in \mathbb{R}^d$, which along with the upper and lower density conditions \eqref{measure_density}, would yield by \cite[Theorem 4.5]{MP} that $d-\alpha$ is an integer, a contradiction.  Therefore $I_\alpha \mu \notin L^1(Q_0;\mathcal{H}^{d-\alpha}_\infty)$.

Thus in general one cannot expect $I_\alpha \mu \in L^1(Q_0;\mathcal{H}^{d-\alpha}_\infty)$ for $\mu \in \mathcal{M}^{d-\alpha}(\mathbb{R}^d)$.  This does not rule out the possibility that $\mu$ which happen to satisfy this property might enjoy a similar conclusion to that of Theorem \ref{morrey}, which is our
\begin{openquestion}
If one assumes that $\mu \in \mathcal{M}^{d-\alpha}(\mathbb{R}^d)$ and $I_\alpha \mu \in L^1(K,\mathcal{H}^{d-\alpha}_\infty)$ for all compact sets $K \subset \mathbb{R}^d$, does one have $I_\alpha \mu \in BMO^{d-\alpha}(\mathbb{R}^d)$?
\end{openquestion}

Before discussing the plan of the paper, a few remarks are in order on the study of these spaces $BMO^\beta(Q_0)$ and their relationship with the results of Jean Van Schaftingen \cite{VS}.  The observation that there are spaces of interest between critical Sobolev spaces and $BMO$ is the subject of Jean Van Schaftingen's paper \cite{VS}.  In particular, he there utilizes endpoint $L^1$ inequalities pioneered by J. Bourgain and H. Brezis in \cite{BourgainBrezis2004,BourgainBrezis2007}, and to which he contributed \cite{VS,VS1,VS2, VS3}, as a starting place to define such spaces.  The spaces he studies are the dual of closed $k$-forms with integrable coefficients for $k=1,\ldots, d-1$, a discrete family of spaces for which he establishes various properties in a similar spirit to what we record here, including nesting in his Theorem 3.1, extension in his Theorem 3.2, restriction in his Theorem 3.4, and exponential integrability in his Proposition 5.6.  For Van Schaftingen's spaces, there are still a number of open questions, including whether they are closed under composition with Lipschitz functions and admit restriction/exponential integrability up to the endpoint (see the end of Section 5 on p.~18 and Section 6 in \cite{VS}), results which we can settle for the spaces defined in this paper.  It seems an interesting problem to compare the spaces he considers and the ones we define here for suitable choices of $k$ and $\beta$.


The plan of the paper is as follows.  In Section \ref{preliminaries}, we recall some necessary preliminaries related to the Hausdorff content and Choquet integrals.  In Section \ref{content-jn}, we establish results for the dyadic Hausdorff content adapted to an arbitrary cube $Q$ which are analogous to results known for the Lebesgue measure.  This includes a weak-type estimate for a dyadic Hausdorff content maximal function in Theorem \ref{max}, a weak form of the Lebesgue differentiation theorem in Corollary \ref{differentialhausdorfflimsup}, and a Calder\'on-Zygmund decomposition in Theorem \ref{cz}.  In Section \ref{mainresult}, we use the results up to this point in the paper to prove Theorem \ref{jn_content} and then from this theorem we deduce Corollaries \ref{exp_integrability}, \ref{pseminorms}, and \ref{nesting}.   Finally, in Section \ref{otherresults} we prove the remainder of the results asserted in the introduction, Theorems \ref{composition}, \ref{restriction}, and \ref{morrey}.

\section{Preliminaries}\label{preliminaries}
The purpose of this section is to collect some necessary preliminaries concerning the dyadic Hausdorff contents adapted to cubes, the Choquet integrals defined in terms of these contents, and the function spaces generated by these integrals.  We rely on results established in \cite{AdamsChoquet,AdamsChoquet1,OP,OP1,OV,ST,STW}, see also \cite{H,H1,PS, PS1, Saito, Saito1, STW1} for other results concerning Choquet integrals.  The subtle point is that these objects are not measures, so that integration with respect to them is a delicate matter.  We begin with some general definitions concerning set functions, and to this end we let $\mathcal{P}(\mathbb{R}^d)$ denote the class of all subsets $\mathbb{R}^d$.

\begin{definition}
We say that a set function $H:\mathcal{P}(\mathbb{R}^d) \to [0,\infty]$ is an outer measure if $H$ satisfies
\begin{enumerate}
\item  $H(\emptyset)=0$;
\item  If $E \subset F$, then $H(E)\leq H(F)$;
\item If $E \subset \cup_{i=1}^\infty E_i$ then 
\begin{align*}
H(E) &\leq \sum_{i=1}^\infty H(E_i).
\end{align*}
\end{enumerate}
\end{definition}

\begin{remark}
In the work of D.R. Adams \cite{AdamsChoquet,AdamsChoquet1}, outer measures are also referred to as capacities in the sense of N. Meyers.
\end{remark}

For such set functions, the monotonicity property allows us to define a notion of integration:
\begin{definition}
Given an outer measure $H$, we define the Choquet integral of a non-negative function $f$ over a cube $Q$ by
\begin{align}\label{choquet_def}
\int_Q f \;dH:= \int_0^\infty H(\{x \in Q : f(x)>t\}) \;dt,
\end{align}
where the right hand side is understood as the Lebesgue integral of the function
\begin{align*}
t \mapsto H(\{x \in Q : f(x)>t\}).
\end{align*}
\end{definition}

The Choquet integral is not linear, and not even sublinear, unless one assumes additional properties of $H$.  We here introduce two assumptions that have been used to establish sublinearity.
\begin{definition}
We say that a set function $H:\mathcal{P}(\mathbb{R}^d) \to [0,\infty]$ is strongly subadditive if $H$ satisfies
\begin{align*}H(E\cup F)+H(E\cap F) \leq H(E)+H(F).
\end{align*}
for all $E, F \subset \mathbb{R}^d$.
\end{definition}
and
\begin{definition}
We say that a set function $H:\mathcal{P}(\mathbb{R}^d) \to [0,\infty]$ is continuous from below if 
\begin{align*}
H(E)=\lim_{n\to \infty} H(E_n).
\end{align*}
for every increasing sequence of arbitrary sets $\{E_n\}$, where $E= \bigcup_{n=1}^\infty E_n$.
\end{definition}

If $H$ is a set function which satisfies the preceding assumptions, the Choquet integral of such a set function is sublinear and satisfies a monotone convergence theorem (see \cite[Proposition 3.2]{STW}):
\begin{proposition}\label{sublinear}
Suppose $H$ is an outer measure which is strongly subadditive and continuous from below.  Then the Choquet integral with respect to $H$ satisfies
\begin{enumerate}
\item  If $c \geq 0$, $f \geq 0$
\begin{align*}
\int cf\;dH = c \int f\;dH;
\end{align*}
\item  If $0\leq f_n \uparrow f$, then
\begin{align*}
\lim_{n \to \infty}\int f_n\;dH = \int f\;dH;
\end{align*}
\item \label{sub} For non-negative functions $f_n$,  
\begin{align*}
\int \sum_{n=1}^\infty f_n\;dH \leq \sum_{n=1}^\infty \int f_n\;dH.
\end{align*}
\end{enumerate}
\end{proposition}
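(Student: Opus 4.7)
The plan is to handle the three claims in sequence, with most of the technical content residing in (3).

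Part (1) is immediate from the change of variables $s = t/c$ in the layer cake definition \eqref{choquet_def}, noting that $\{cf > t\} = \{f > t/c\}$ for $c > 0$; the case $c = 0$ is trivial. Part (2) follows from the set-theoretic identity $\{f > t\} = \bigcup_n \{f_n > t\}$, which holds for each $t \geq 0$ whenever $0 \leq f_n \uparrow f$ pointwise. Continuity from below of $H$ then yields $H(\{f_n > t\}) \uparrow H(\{f > t\})$ pointwise in $t$, and the classical Lebesgue monotone convergence theorem applied to the non-negative non-decreasing sequence $t \mapsto H(\{f_n > t\})$ transfers this to the Choquet integral.

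For part (3), I would first reduce to the binary case: once the inequality $\int(f + g)\, dH \leq \int f\, dH + \int g\, dH$ is known for non-negative $f, g$, iteration gives $\int S_N\, dH \leq \sum_{n=1}^N \int f_n\, dH$ for the partial sums $S_N = \sum_{n=1}^N f_n$, and part (2) applied to $S_N \uparrow \sum_{n=1}^\infty f_n$ yields the countable statement. The binary subadditivity is the heart of the proposition and is where strong subadditivity of $H$ enters. My strategy is to reduce to simple functions of nested form $\varphi = \sum_{k=1}^N c_k \mathbf{1}_{A_k}$ with $A_1 \supset A_2 \supset \cdots \supset A_N$ and $c_k > 0$, for which the layer cake produces a clean expression for $\int \varphi\, dH$ as a linear combination of the values $H(A_k)$. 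For a pair of nested simple functions passed to a common refinement of their level sets, the identity
\[
\int(\mathbf{1}_A + \mathbf{1}_B)\, dH = H(A \cup B) + H(A \cap B),
\]
which one verifies directly from \eqref{choquet_def} by observing that $\{\mathbf{1}_A + \mathbf{1}_B > t\}$ equals $A \cup B$ on $[0,1)$ and $A \cap B$ on $[1,2)$, combined with the strong subadditivity bound $H(A \cup B) + H(A \cap B) \leq H(A) + H(B)$, yields the binary inequality layerwise. Finally, approximating non-negative $f, g$ from below by increasing sequences of nested simple functions and invoking part (2) transfers the subadditivity to the general setting.

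The principal obstacle is this layerwise reduction: one must be careful to match the common refinement of the level-set structure of $\varphi$ and $\psi$ with the application of strong subadditivity, so that the bookkeeping delivers a pointwise-in-$t$ bound for $H(\{\varphi + \psi > t\})$. Once this is in place, the passage to general non-negative $f, g$ and then to countable sums is a routine monotone-convergence exercise, for which (2) is tailor-made.
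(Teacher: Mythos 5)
Your treatment of (1) and (2) is correct, and so is the reduction of (3) to the binary inequality $\int(f+g)\,dH\le\int f\,dH+\int g\,dH$ by iteration and an application of (2). The indicator identity $\int(\mathbf{1}_A+\mathbf{1}_B)\,dH = H(A\cup B)+H(A\cap B)$ and its combination with strong subadditivity are also fine. The gap is precisely the step you flag as the principal obstacle, and it is not merely bookkeeping: the passage from indicators to simple functions by pairing level sets after a common refinement does not go through as stated. If you normalize so that $\varphi=\sum_j d_j\mathbf{1}_{A_j}$ and $\psi=\sum_j d_j\mathbf{1}_{B_j}$ share the same coefficients, the rewriting $\varphi+\psi=\sum_j d_j(\mathbf{1}_{A_j\cup B_j}+\mathbf{1}_{A_j\cap B_j})$ is a sum of indicator functions that are in general not pairwise comonotone across $j$ (already for two levels, $A_2\cup B_2$ and $A_1\cap B_1$ need not be comparable), so the Choquet integral of this sum does not split into the sum of the pieces, and applying strong subadditivity ``layerwise'' is not justified. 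You have proved binary subadditivity for a pair of indicators, but not for a pair of simple functions.

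A clean way to close the gap is to prove the one-indicator lemma: for any nonnegative $f$, $c>0$, and set $A$,
\[
\int(f+c\mathbf{1}_A)\,dH\le \int f\,dH + cH(A).
\]
Indeed $\{f+c\mathbf{1}_A>t\}=\{f>t\}\cup\bigl(A\cap\{f>t-c\}\bigr)$, and strong subadditivity with $E=\{f>t\}$, $F=A\cap\{f>t-c\}$, together with $E\cap F = A\cap\{f>t\}$ (since $\{f>t\}\subset\{f>t-c\}$), gives
\[
H\bigl(\{f+c\mathbf{1}_A>t\}\bigr)\le H(\{f>t\})+H\bigl(A\cap\{f>t-c\}\bigr)-H\bigl(A\cap\{f>t\}\bigr).
\]
Integrating in $t$ and shifting $t\mapsto t-c$ in the middle term, the last two terms cancel up to a boundary contribution of exactly $cH(A)$, which is the lemma. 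Iterating over the chain decomposition $\psi=\sum_k c_k\mathbf{1}_{B_k}$ of a nonnegative simple function yields $\int(f+\psi)\,dH\le\int f\,dH+\int\psi\,dH$, and part (2) then extends this to general nonnegative $g$ and to countable sums. Note also that the paper itself does not reprove this proposition; it cites Proposition 3.2 of Saito--Tanaka--Watanabe, so there is no internal proof to compare against, but the single-indicator lemma is the standard mechanism by which strong subadditivity enters, and it replaces the unresolved layerwise matching in your sketch.
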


The property \eqref{sub} asserts that under such assumptions one has a sublinear integral.  When restricted to a suitable class of functions, this will allow us to define a norm.  To this end, we next introduce the notion of $H$-quasicontinuity.
\begin{definition}
Given an outer measure $H$, we say that a function $f$ is $H$-quasicontinuous if  for every $\epsilon>0$, there exists an open set $O_\epsilon$ such that $H(O_\epsilon)<\epsilon$ and $f|_{O_\epsilon^c}$ is continuous.
\end{definition}

We are now prepared to introduce a norm on the space of $H$-quasicontinuous functions.  For a signed $H$-quasicontinuous function $f$ we define
\begin{align}\label{choquet_def}
\|f\|_{L^1(Q_0;H)}\vcentcolon= \int_{Q_0} |f|\;dH,
\end{align}
and we let
\begin{align}\label{l1}
L^1(Q_0;H)\vcentcolon= \left\{ f\; H\text{-quasicontinuous } : \|f\|_{L^1(Q_0;H)}<+\infty\right\}.
\end{align}

The fact that this object is a norm is one part of the following theorem on the properties of this space which are derived from the assumptions on $H$.
\begin{theorem}\label{functionspace}
Suppose $H$ is an outer measure which is strongly subadditive and continuous from below, $Q_0 \subset \mathbb{R}^d$ is a cube, and let $L^1(Q_0;H)$ be as defined in \eqref{l1}.  Then
\begin{enumerate}
\item \eqref{choquet_def} is a norm;
\item $L^1(Q_0;H)$ is a Banach space;
\item The space of continuous and bounded functions in $Q_0$, $C_b(Q_0)$, for which \eqref{choquet_def} is finite is dense in $L^1(Q_0;H)$.
\end{enumerate}
\end{theorem}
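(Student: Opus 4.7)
The plan is to extract the three properties from the sublinearity and monotone convergence in Proposition~\ref{sublinear}, adapting the classical recipe for $L^1$ spaces to the fact that $H$ is only an outer measure.

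For (1), positive homogeneity is immediate from Proposition~\ref{sublinear}(1) applied to $|cf|=|c|\,|f|$. For the triangle inequality, the pointwise domination $|f+g|\leq |f|+|g|$ combined with the monotonicity of $H$ gives $\int|f+g|\,dH\leq \int(|f|+|g|)\,dH$, and Proposition~\ref{sublinear}(3) bounds the right-hand side by $\int|f|\,dH+\int|g|\,dH$. For positive definiteness, $\|f\|_{L^1(Q_0;H)}=0$ forces the non-negative monotone integrand $t\mapsto H(\{|f|>t\})$ to vanish for every $t>0$, and countable subadditivity then yields $H(\{|f|>0\})\leq \sum_n H(\{|f|>1/n\})=0$, so $f=0$ in the standard quotient by $H$-null functions.

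For (2), given a Cauchy sequence $\{f_n\}$ I would extract a subsequence with $\|f_{n_{k+1}}-f_{n_k}\|_{L^1(Q_0;H)}\leq 2^{-k}$, and apply Proposition~\ref{sublinear}(3) to the series $\sum_k |f_{n_{k+1}}-f_{n_k}|$ to bound its Choquet integral. The series then converges outside a set of $H$-content zero, producing a pointwise $H$-a.e.\ limit $f$. A Fatou-type inequality $\int|f-f_{n_k}|\,dH\leq \liminf_j\int|f_{n_j}-f_{n_k}|\,dH$, obtained from Proposition~\ref{sublinear}(2) applied to $g_m\vcentcolon= \inf_{j\geq m}|f_{n_j}-f_{n_k}|\uparrow|f-f_{n_k}|$ together with the monotonicity of $H$, gives $\|f-f_{n_k}\|_{L^1(Q_0;H)}\to 0$, and the Cauchy property promotes this to $f_n\to f$ in norm. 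To see that $f$ is $H$-quasicontinuous, I would refine the subsequence using the Chebyshev-type bound $H(\{|g|>\lambda\})\leq \lambda^{-1}\|g\|_{L^1(Q_0;H)}$ so that $H(\{|f_{n_k}-f|>2^{-k/2}\})\leq 2^{-k/2}$, union these bad sets from a large index with open sets of small total $H$-content witnessing the quasicontinuity of each $f_{n_k}$, and observe that on the complement every $f_{n_k}$ is continuous and the convergence is uniform, so $f$ is continuous there.

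For (3), the plan is first to truncate and then to apply Tietze extension. Given $f\in L^1(Q_0;H)$, set $f_N\vcentcolon= \max(\min(f,N),-N)$; the pointwise identity $\{|f-f_N|>t\}=\{|f|>N+t\}$ for $t>0$ gives $\|f-f_N\|_{L^1(Q_0;H)}=\int_N^\infty H(\{|f|>s\})\,ds\to 0$ as the tail of a convergent integral. It therefore suffices to approximate a bounded $H$-quasicontinuous $f$ by elements of $C_b(Q_0)$. For $\epsilon>0$ I choose an open $O$ with $H(O)<\epsilon$ and $f|_{O^c}$ continuous, and extend $f|_{O^c}$ by the Tietze extension theorem to $g\in C_b(Q_0)$ with $\|g\|_\infty\leq \|f\|_\infty$; since $f-g$ is supported in $O$ and bounded by $2\|f\|_\infty$, $\|f-g\|_{L^1(Q_0;H)}\leq 2\|f\|_\infty\,H(O)<2\|f\|_\infty\,\epsilon$. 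The principal obstacle I anticipate is the quasicontinuity-preservation step in (2): the Chebyshev-produced bad sets must be \emph{opened up} without losing control of their $H$-content, which requires a mild outer regularity property of $H$ that holds for the dyadic Hausdorff contents used elsewhere in the paper but should be flagged rather than taken as automatic from the stated set-function axioms.
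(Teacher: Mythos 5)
The paper dispatches this theorem with a one-line citation to \cite[Proposition 2.2 and 2.3]{OP} together with \cite[Proposition 3.2]{STW}, so your self-contained Riesz--Fischer argument is a genuinely different (and more informative) route. Parts (1) and (3) are correct as you present them: positive definiteness modulo $H$-null functions, triangle inequality via Proposition \ref{sublinear}(3), the tail estimate $\|f-f_N\|_{L^1(Q_0;H)}=\int_N^\infty H(\{|f|>s\})\,ds$, and the Tietze step all check out. The Fatou-type inequality you derive from monotone convergence applied to $g_m=\inf_{j\geq m}|f_{n_j}-f_{n_k}|$ is also sound, keeping in mind that the convergence $g_m\uparrow |f-f_{n_k}|$ holds only $H$-a.e., which is harmless since Choquet integration is insensitive to modifications on $H$-null sets.

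The issue you flag in part (2) is real, but your diagnosis overshoots: no outer regularity of $H$ is required, and moreover your particular bad sets $\{|f_{n_k}-f|>2^{-k/2}\}$ contain a circularity, since they reference $f$ itself, whose continuity on the good set is exactly what you are trying to prove. The standard remedy uses only the quasicontinuity of the approximants. Fix $\epsilon>0$, pick open $O_k$ with $H(O_k)<2^{-k}\epsilon$ and $f_{n_k}|_{O_k^c}$ continuous, and set $O:=\bigcup_k O_k$, so $H(O)<\epsilon$ and \emph{every} $f_{n_k}$ is continuous on $O^c$. Replace your Chebyshev sets with tail sums of \emph{consecutive} differences,
\begin{align*}
g_m:=\sum_{j\geq m}|f_{n_{j+1}}-f_{n_j}|,\qquad F_m:=\{g_m>2^{-m/2}\},
\end{align*}
for which Proposition \ref{sublinear}(3) and Chebyshev give $H(F_m)\leq 2^{m/2}\cdot 2^{-m+1}=2^{-m/2+1}$. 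Each $g_m$ restricted to $O^c$ is lower semicontinuous (an increasing limit of continuous partial sums), so $F_m\cap O^c$ is relatively open in $O^c$, and therefore $O\cup\bigl(F_m\cap O^c\bigr)$ is \emph{genuinely open} in $\mathbb{R}^d$, with $H$-content controlled by subadditivity. Taking $E_M:=O\cup\bigcup_{m\geq M}\bigl(F_m\cap O^c\bigr)$ for $M$ large gives an open set with $H(E_M)<2\epsilon$ on whose complement $g_m\leq 2^{-m/2}$ for all $m\geq M$; hence $f_{n_j}\to f$ uniformly there, all the $f_{n_j}$ are continuous there, and $f$ is continuous on $E_M^c$. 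This produces the quasicontinuity witness without any extra hypothesis on $H$, and removes the circularity, since nothing involving $f$ itself needs to be continuous before the limit is taken.
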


\begin{proof}
It follows from Proposition \ref{sublinear} above (Proposition 3.2 in \cite{STW}) that under these assumptions the integral is sublinear, so that the rest of the assertions follow from
 \cite[Proposition 2.2 and 2.3]{OP}.
 \end{proof}

\begin{remark}
If, in addition, one has the property that
\begin{align*}
\sum_{j=0}^\infty H\left(E\cap \{ j\leq |x| < j+1\}\right) \leq C  H(E)
\end{align*}
for all $E \subset \mathbb{R}^d$ and some $C>0$ independent of $E$, then one can replace $C_b(Q_0)$ with $C_c(Q_0)$ in the above density assertion, see \cite[Proposition 2.2]{OP}.  Moreover, one observes from the proof that the sets
\begin{align*}
\{ j\leq |x| < j+1\}
\end{align*}
can be replaced by more general tessellations of $\mathbb{R}^d$, see also \cite[(e) on p.~14]{AdamsChoquet1}
\end{remark}

With these results collected, we next introduce the outer measures of relevance for our consideration.  First, we again recall the definition of the spherical Hausdorff content
\begin{align*}
\mathcal{H}^{\beta}_{\infty}(E):= \inf \left\{\sum_{i=1}^\infty \omega_{\beta} r_i^\beta : E \subset \bigcup_{i=1}^\infty B(x_i,r_i) \right\},
\end{align*}
where $\omega_\beta\vcentcolon= \pi^{\beta/2}/\Gamma(\beta/2+1)$ is a normalization constant.  Next, for any fixed cube $Q \subset \mathbb{R}^d$ we recall the definition of the dyadic Hausdorff content subordinate to $Q$:
\begin{align*}
\mathcal{H}^{\beta,Q}_{\infty}(E):= \inf \left\{\sum_{i=1}^\infty l(Q_i)^\beta : E \subset \bigcup_{i=1}^\infty Q_i, Q_i \in \mathcal{D}(Q) \right\},
\end{align*}
Here $\mathcal{D}(Q)$ denotes the set of all dyadic cubes generated by the cube $Q$, where we follow the convention of \cite{STW,ST} with cubes taken as half-open.  For example, in the case $Q= [0,1)^d$, this is the usual dyadic lattice and in this case $\mathcal{H}^{\beta,Q}_{\infty}=\tilde{\mathcal{H}}^{\beta}_{\infty}$, the usual dyadic Hausdorff content.

While the spherical Hausdorff content and dyadic Hausdorff content adapted to a cube are both outer measures, the spherical Hausdorff content may fail to be strongly subadditive.  This is not the case for the dyadic content, which satisfies the necessary properties to generate a well-behaved Choquet integral.  In particular, a suitable modification of \cite[Proposition 3.5 and 3.6]{STW} to the dyadic lattice generated by $Q$ yields
\begin{proposition}
$\mathcal{H}^{\beta,Q}_{\infty}$ is a strongly subadditive and continuous from below.
\end{proposition}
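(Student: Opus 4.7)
The plan is to adapt \cite[Propositions~3.5 and 3.6]{STW}, which establish these two properties for the standard dyadic lattice $\mathcal{D}([0,1)^d)$. Because $\mathcal{D}(Q)$ shares the same combinatorial tree structure --- any two cubes are nested or disjoint, each cube has $2^d$ children --- the arguments go through with only notational changes.

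For strong subadditivity, fix $E, F \subset \mathbb{R}^d$ and $\varepsilon > 0$. By the nested-or-disjoint dichotomy, any dyadic cover may be refined to a pairwise disjoint one of no greater total content (simply discard any cube that is contained in another cube of the cover), so I choose disjoint dyadic covers $\{E_i\}$ of $E$ and $\{F_j\}$ of $F$ with $\sum_i l(E_i)^\beta \leq \mathcal{H}^{\beta,Q}_\infty(E) + \varepsilon$ and similarly for $F$. Partition $\{E_i\} \sqcup \{F_j\}$ via
\[
\mathcal{A} := \{E_i : E_i \subseteq F_j \text{ for some } j\}, \qquad \mathcal{B} := \{F_j : F_j \subsetneq E_i \text{ for some } i\}.
\]
A short case analysis using the nesting property together with the disjointness of $\{E_i\}$ and of $\{F_j\}$ shows that $\{E_i : E_i \notin \mathcal{A}\} \cup \{F_j : F_j \notin \mathcal{B}\}$ covers $E \cup F$ while $\mathcal{A} \cup \mathcal{B}$ covers $E \cap F$. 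Summing the contents over this partition gives
\[
\mathcal{H}^{\beta,Q}_\infty(E \cup F) + \mathcal{H}^{\beta,Q}_\infty(E \cap F) \leq \sum_i l(E_i)^\beta + \sum_j l(F_j)^\beta \leq \mathcal{H}^{\beta,Q}_\infty(E) + \mathcal{H}^{\beta,Q}_\infty(F) + 2\varepsilon,
\]
and letting $\varepsilon \to 0$ concludes this half.

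For continuity from below, given $E_n \uparrow E$, monotonicity yields $\lim_n \mathcal{H}^{\beta,Q}_\infty(E_n) \leq \mathcal{H}^{\beta,Q}_\infty(E)$. For the reverse inequality I would invoke the exact duality for the dyadic Hausdorff content, a max-flow--min-cut identity on the tree $\mathcal{D}(Q)$:
\[
\mathcal{H}^{\beta,Q}_\infty(A) = \sup\bigl\{\nu(A) : \nu \geq 0 \text{ Radon on } Q,\ \nu(R) \leq l(R)^\beta \text{ for all } R \in \mathcal{D}(Q)\bigr\}.
\]
Each admissible $\nu$ is a positive measure, hence continuous from below along $E_n \uparrow E$, so swapping the two suprema gives
\[
\mathcal{H}^{\beta,Q}_\infty(E) = \sup_\nu \nu(E) = \sup_\nu \sup_n \nu(E_n) = \sup_n \sup_\nu \nu(E_n) = \lim_n \mathcal{H}^{\beta,Q}_\infty(E_n).
\]

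The main obstacle is continuity from below itself: a naive pointwise diagonalization of near-optimal covers of the $E_n$ can fail to produce a cover of $E$, because the covering cubes through a point $x \in E$ may escape to arbitrarily small scales without ever pinning down a fixed dyadic cube that contains $x$. Passing to the dual Radon-measure formulation circumvents this difficulty, since continuity from below holds automatically for measures; the tree-structured LP that underlies the duality is precisely where the dyadic (rather than spherical) structure is essential.
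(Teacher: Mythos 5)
Your strong subadditivity argument is correct and self-contained: the reduction to pairwise disjoint dyadic covers is legitimate (take maximal cubes, using the nested-or-disjoint dichotomy), and the allocation of $\{E_i\}\sqcup\{F_j\}$ into $\mathcal{A}\cup\mathcal{B}$ (which covers $E\cap F$) versus its complement (which covers $E\cup F$) does check out --- for instance, $E_i\subseteq F_j$ together with $F_j\subsetneq E_{i'}$ would force $E_i\subsetneq E_{i'}$, contradicting the disjointness of $\{E_i\}$. Since the paper proves this Proposition only by citing a modification of \cite[Proposition 3.5 and 3.6]{STW} to the lattice $\mathcal{D}(Q)$, a line-by-line comparison is not possible, but this half of your argument is sound.

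The continuity-from-below half has a genuine gap. The exact tree duality
\begin{align*}
\mathcal{H}^{\beta,Q}_\infty(A) = \sup\bigl\{\nu(A) : \nu \geq 0 \text{ Radon on } Q,\ \nu(R) \leq l(R)^\beta \text{ for all } R \in \mathcal{D}(Q)\bigr\}
\end{align*}
cannot simply be invoked for the limit set $E$, which the paper explicitly allows to be an \emph{arbitrary} subset of $\mathbb{R}^d$. The inequality $\sup_\nu\nu(A)\leq\mathcal{H}^{\beta,Q}_\infty(A)$ holds for all $A$, but the reverse (Frostman-type) direction is a standard finite LP plus compactness argument only for compact $A$; extending it to Borel or Suslin sets usually proceeds through Choquet capacitability, which itself \emph{presupposes} strong subadditivity and continuity from below, and for genuinely arbitrary sets the equality need not hold. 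There is also a circularity within the paper's own development: the duality \eqref{HB} that would ground your identity is derived downstream of Theorem \ref{functionspace}, whose hypotheses include precisely the two properties being proved here. To close the gap you would need to prove the exact tree duality for the relevant class of sets from scratch (which is no easier than the claim itself), or argue continuity from below directly via the combinatorics of $\mathcal{D}(Q)$, as \cite{STW} presumably does; note in particular that the usual device of passing to a $G_\delta$ hull of the same content is not immediate here because the covers are by half-open dyadic cubes rather than by open sets.
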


Therefore as a consequence of Theorem \ref{functionspace}, the function space of $\mathcal{H}^{\beta,Q}_{\infty}$-quasicontinuous functions for which norm \ref{l1} is finite is complete and enjoys approximation by bounded continuous functions.  The motivation for introducing the spherical content here is that it is a useful tool to move between the various dyadic contents, as one has their equivalence as set functions.  The following result asserting this is proved in \cite[Proposition 2.3]{YangYuan} in the usual dyadic case, though the argument extends to a general cube by rotation and dilation.
\begin{proposition}
There exist a universal constant $C_\beta>0$ such that for every $Q \subset \mathbb{R}^d$
\begin{align*}
 \frac{1}{C_\beta} \mathcal{H}^{\beta,Q}_{\infty}(E) \leq  \mathcal{H}^{\beta}_{\infty}(E) \leq C_\beta \mathcal{H}^{\beta,Q}_{\infty}(E)
\end{align*}
for all $E \subset \mathbb{R}^d$.
\end{proposition}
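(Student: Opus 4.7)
The plan is to establish the equivalence via two separate covering comparisons, one in each direction, and then reduce to the standard dyadic lattice by scaling and translation as suggested in the remark citing \cite{YangYuan}. Both $\mathcal{H}^{\beta}_\infty$ and $\mathcal{H}^{\beta,Q}_\infty$ are dimensionally homogeneous of degree $\beta$ under dilations and invariant under translations, and $\mathcal{D}(Q)$ is obtained from the standard dyadic lattice by an affine map that sends $[0,1)^d$ to $Q$; consequently the constants one extracts will depend only on $d$ and $\beta$, uniformly in $Q$.

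For the upper bound $\mathcal{H}^{\beta}_\infty(E) \leq C_\beta\, \mathcal{H}^{\beta,Q}_\infty(E)$, I would take any admissible dyadic covering $\{Q_i\}\subset \mathcal{D}(Q)$ of $E$ and replace each $Q_i$ by the ball $B(x_i, \tfrac{\sqrt{d}}{2} l(Q_i))$ centered at the center of $Q_i$, which contains $Q_i$. This yields a ball cover of $E$ with
\begin{align*}
\sum_{i=1}^\infty \omega_\beta \Bigl(\tfrac{\sqrt{d}}{2}\Bigr)^\beta l(Q_i)^\beta \;=\; \omega_\beta \Bigl(\tfrac{\sqrt{d}}{2}\Bigr)^\beta \sum_{i=1}^\infty l(Q_i)^\beta,
\end{align*}
and taking the infimum over such dyadic coverings gives the claim with $C_\beta = \omega_\beta (\sqrt{d}/2)^\beta$.

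For the lower bound $\mathcal{H}^{\beta,Q}_\infty(E) \leq C_\beta\, \mathcal{H}^{\beta}_\infty(E)$, I would start from an admissible ball covering $\{B(x_i,r_i)\}$ of $E$ and, for each $i$, choose the unique integer $k_i\in\mathbb{Z}$ with $l(Q)\, 2^{k_i-1} < r_i \leq l(Q)\, 2^{k_i}$. Then $B(x_i,r_i)$ is contained in the union of at most $N_d\leq 3^d$ cubes of $\mathcal{D}(Q)$ at scale $l(Q)\, 2^{k_i}$; each such cube has side length at most $2 r_i$, so these dyadic cubes jointly contribute to the dyadic content at most
\begin{align*}
N_d\, (2 r_i)^\beta \;=\; \frac{2^\beta N_d}{\omega_\beta}\, \omega_\beta r_i^\beta.
\end{align*}
Summing over $i$ and taking the infimum produces the required bound with $C_\beta = 2^\beta N_d / \omega_\beta$.

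I expect no serious obstacle. The only points that require care are: (i) ensuring that the dyadic scale $k_i$ is always well-defined, which it is because $\mathcal{D}(Q)$ is interpreted as the full dyadic lattice generated by $Q$ (so arbitrarily large and small scales are available, consistent with the identification $\mathcal{H}^{\beta,Q}_\infty = \widetilde{\mathcal{H}}^\beta_\infty$ when $Q=[0,1)^d$); and (ii) verifying that the overlap bound $N_d$ in the lower-bound step depends only on $d$, which follows from the standard fact that a Euclidean ball of radius $r$ meets at most a dimensional constant number of axis-parallel cubes whose side length lies in $[r,2r)$. Since cubes in the paper are axis-aligned, no rotation is needed, and the proof reduces cleanly to the case $Q=[0,1)^d$ already treated in \cite[Proposition 2.3]{YangYuan}.
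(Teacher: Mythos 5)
Your proof is correct and carries out, in full detail, exactly the covering comparison that the paper delegates to \cite[Proposition 2.3]{YangYuan} together with the scaling/translation reduction. Your observation that only translation and dilation (not rotation) are needed, since every cube in the paper is axis-parallel, is also accurate and cleans up a slightly loose phrase in the paper; the only cosmetic point worth noting is that the constants you extract depend on $d$ as well as $\beta$, and one should take the larger of the two one-sided constants (and at least $1$) to obtain the single $C_\beta$ appearing in the statement.
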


The preceding proposition shows that all of the dyadic Hausdorff contents are equivalent with a constant independent of the cube, so that if one works in a fixed cube $Q_0$,  the functional spaces $L^1(Q_0;\mathcal{H}^{\beta,Q}_\infty)$ are all equivalent (though with different norms).  Therefore if we introduce
\begin{align}\label{norm-prime}
\|f\|_{L^1_*(Q_0;\mathcal{H}^{\beta,Q_0}_\infty)}\vcentcolon= \int_{Q_0} |f|\;d\mathcal{H}^{\beta,Q_0}_\infty,
\end{align}
we have finally collected all of relevant references of several facts claimed in the introduction - that \eqref{norm-prime} is a norm which is equivalent to the quasi-norm \eqref{norm}, that $L^1(Q_0;\mathcal{H}^{\beta,Q_0}_\infty)$ is a Banach space, and that the space of functions in $C_b(Q_0)$ for which \eqref{norm} is finite is dense in $L^1(Q_0;\mathcal{H}^{\beta,Q_0}_\infty)$.  The last of the claims to address is the duality asserted in the introduction, that the dual of $L^1(Q_0;\mathcal{H}^{\beta,Q_0}_\infty)$ is the Morrey space
\begin{align*}
\mathcal{M}^{\beta}(Q_0)\vcentcolon= \left\{ \nu \in M_{loc}(\mathbb{R}^d): \|\nu\|_{\mathcal{M}^\beta} \vcentcolon= \sup_{Q \in \mathcal{D}(Q_0)} \frac{|\nu|(Q)}{\;l(Q)^\beta}<\infty \right\}.
\end{align*}
This is a result of D.R. Adams from \cite[Proposition 1 on p.~118]{AdamsChoquet} (see also the detailed argument presented by H. Saito and H. Tanaka in \cite{ST}).  From this duality the formula \eqref{HB} follows from the Hahn-Banach theorem.  This implies an important consequence we appealed to in the introduction, that for functions $f \in L^1(Q_0;\mathcal{H}^{\beta,Q_0}_\infty)$ one has an upper bound for the integral of $|f|$ with respect to any $\beta$-dimensional growth measure $\nu$:  There exists a constant $C'_\beta>0$ such that
\begin{align*}
\int_{Q_0} |f| \;d\nu  \leq C'_\beta \int_{Q_0} |f|\;d\mathcal{H}^{\beta,Q_0}_\infty
\end{align*}
for every $\nu$ with $\|\nu\|_{\mathcal{M}^\beta} \leq 1$.

Another important consequence of the equivalence of the spherical Hausdorff content with any of the dyadic Hausdorff contents  is that one can replace the semi-norm on $BMO^\beta(Q_0)$ with a quantity which simplifies the proofs, namely
\begin{align}\label{dyadic_norm}
\|u\|_{BMO^{\beta}_*(Q_0)}\vcentcolon=   \sup_{Q \subset Q_0} \inf_{c \in \mathbb{R}} \frac{1}{l(Q)^\beta}  \int_{Q}  |u-c| \;d\mathcal{H}^{\beta,Q}_\infty.
\end{align}
Indeed, we utilize this semi-norm in the sequel in the argument of Lemma \ref{aux_lemma}, which is slightly more general than Theorem \ref{jn_content} in that it can be applied to infinite cubes.  This is in the spirit of the program instituted in John and Nirenberg \cite{JN}, analogous to their Lemma 1'.   The semi-norm \eqref{dyadic_norm} also has the advantage that some of the quantities become more transparent, e.g. that one can take
\begin{align*}
c_Q= \argmin_{c \in \mathbb{R}} \frac{1}{l(Q)^\beta}  \int_{Q}  |u-c| \;d\mathcal{H}^{\beta,Q}_\infty,
\end{align*}
and that this infimum is attained.  Our choice to give definitions and the theorems stated in the introduction in terms of the spherical Hausdorff content stems from our impression that this object is more common in the literature and therefore more accessible to the reader.


Finally, we record a small modification of a lemma from \cite{OV} which is attributed there to Mel'nikov \cite{Mel}.  In particular, an inspection of the proof in \cite{OV}  yields the following statement.
\begin{proposition}\label{ov}
Suppose $\{Q_j\}$ is a family of non-overlapping dyadic cubes subordinate to some cube lattice $\mathcal{D}(Q)$.  There exists a subfamily $\{Q_{j_k}\}$ and a family of non-overlapping ancestors $\tilde{Q}_k$ such that
\begin{enumerate}
\item \begin{align*}
\bigcup_{j} Q_j \subset \bigcup_{k} Q_{j_k} \cup \bigcup_{k} \tilde{Q}_k
\end{align*}
\item
\begin{align*}
\sum_{Q_{j_k} \subset Q}l(Q_{j_k})^\beta \leq 2l(Q)^\beta, \text{ for each dyadic cube } Q.
\end{align*}
\item For each $\tilde{Q}_k$, 
\begin{align*}
l(\tilde{Q}_k)^\beta \leq \sum_{Q_{j_i} \subset \tilde{Q}_k}l(Q_{j_i})^\beta.
\end{align*}
\end{enumerate}
\end{proposition}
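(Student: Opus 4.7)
The plan is to carry out a dyadic stopping-time construction on the lattice $\mathcal{D}(Q)$. For each $R \in \mathcal{D}(Q)$ set
\[
T(R) := \sum_{Q_j \subset R} l(Q_j)^\beta,
\]
and call $R$ \emph{heavy} if $T(R) > l(R)^\beta$. First I would take $\{\tilde{Q}_k\}$ to be the maximal heavy dyadic cubes in $\mathcal{D}(Q)$. By the nesting property of $\mathcal{D}(Q)$ these are pairwise disjoint, and by maximality every dyadic $R$ not contained in any $\tilde{Q}_k$ is light, i.e.\ $T(R) \leq l(R)^\beta$.

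To build the subfamily I would include every $Q_j$ not contained in any $\tilde{Q}_k$, and for each $\tilde{Q}_k$ adjoin a sub-subfamily $\mathcal{S}_k \subseteq \{Q_j : Q_j \subset \tilde{Q}_k\}$ chosen by sorting the cubes in decreasing order of side length and adding them until the running sum of $l(Q_j)^\beta$ first exceeds $l(\tilde{Q}_k)^\beta$. Since each added cube has $l(Q_j) \leq l(\tilde{Q}_k)$, the final total for $\mathcal{S}_k$ lies in $(l(\tilde{Q}_k)^\beta, 2 l(\tilde{Q}_k)^\beta]$, and I declare $\{Q_{j_k}\} := \{Q_j : Q_j \not\subset \tilde{Q}_k \text{ for any } k\} \cup \bigcup_k \mathcal{S}_k$.

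Property (1) is immediate: every $Q_j$ is either placed directly into the subfamily or lies inside some $\tilde{Q}_k$. Property (3) is the lower bound built into the greedy stopping rule. For (2) I would split cases on the test cube $Q$. If $Q$ is not contained in any $\tilde{Q}_k$, then $Q$ is light; decomposing the sum over $Q_{j_k} \subset Q$ into the part coming from cubes outside all ancestors (at most $T(Q) \leq l(Q)^\beta$) plus the part coming from $\mathcal{S}_{k'}$ with $\tilde{Q}_{k'} \subset Q$ (each bounded by $2 l(\tilde{Q}_{k'})^\beta < 2 T(\tilde{Q}_{k'})$ using heaviness of $\tilde{Q}_{k'}$), and then summing via the disjointness of the $\tilde{Q}_{k'}$, yields the total bound $2 T(Q) \leq 2 l(Q)^\beta$. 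When $Q = \tilde{Q}_k$ the greedy upper bound on $\mathcal{S}_k$ is exactly the claim.

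The main obstacle is Property (2) in the remaining case $Q \subsetneq \tilde{Q}_k$, because a plain size-greedy choice of $\mathcal{S}_k$ can concentrate too much of the selected mass inside a single small subcube of $\tilde{Q}_k$. I would handle this by refining the selection of $\mathcal{S}_k$: apply the same heavy/light stopping-time dichotomy inside $\tilde{Q}_k$ relative to the sublattice $\mathcal{D}(\tilde{Q}_k)$, and select $Q_j$'s only from the light stopping cubes of this internal tree. Iterating the outer case analysis within $\tilde{Q}_k$ forces the sparsity estimate $\sum_{Q_j \in \mathcal{S}_k, Q_j \subset R} l(Q_j)^\beta \leq 2 l(R)^\beta$ at every dyadic $R \subseteq \tilde{Q}_k$ and completes the verification of (2) with the claimed constant.
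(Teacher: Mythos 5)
Your choice of $\tilde{Q}_k$ as the maximal heavy cubes does not work: for that family of exceptional cubes, properties (2) and (3) can be mutually unsatisfiable, and no refinement of the internal selection $\mathcal{S}_k$ can repair the defect. For a concrete example take $d=1$, $\beta=1/2$, $Q=[0,1)$, and let $\{Q_j\}$ be all eight dyadic intervals of length $2^{-7}$ contained in $[0,2^{-4})$. Then $T(R):=\sum_{Q_j\subset R}l(Q_j)^\beta=8\cdot 2^{-7/2}=2^{-1/2}$ for every dyadic $R\supseteq[0,2^{-4})$, so $[0,2^{-2})$ is heavy ($2^{-1/2}>2^{-1}$) while $[0,2^{-1})$ is not ($2^{-1/2}\not>2^{-1/2}$), and your $\tilde{Q}_1=[0,2^{-2})$. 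Property (3) forces the selected mass inside $\tilde{Q}_1$ to be at least $l(\tilde{Q}_1)^{1/2}=2^{-1}$, but every $Q_j$ lies inside $[0,2^{-4})$, and property (2) at $R=[0,2^{-4})$ caps that same mass by $2\,l(R)^{1/2}=2^{-1}$; equality would be required, and it is unattainable since the individual contributions $2^{-7/2}$ do not divide $2^{-1}$ an integer number of times. Hence with $\tilde{Q}_1=[0,2^{-2})$ \emph{no} admissible subfamily exists, and the recursive ``light stopping cube'' device in your last paragraph cannot help, because the obstruction lives at the interior scale $[0,2^{-4})\subsetneq\tilde{Q}_1$ while (3) is pinned at the scale of $\tilde{Q}_1$ itself: the exceptional cube was simply chosen too large.

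The structural problem is that you fix the $\tilde{Q}_k$'s before choosing the subfamily, which forces (2) (a cap at every scale) and (3) (a floor at the scale of $\tilde{Q}_k$) to compete with no slack. The Mel'nikov--Orobitg--Verdera argument that the paper cites proceeds in the opposite order: enumerate the $Q_j$ in nonincreasing side length and accept $Q_j$ into the subfamily if and only if the running sums still satisfy $\sum_{Q_{j_k}\subset R}l(Q_{j_k})^\beta\leq 2\,l(R)^\beta$ for every $R\in\mathcal{D}(Q)$, which gives (2) by construction. When a $Q_j$ is rejected, the smallest ancestor $R$ witnessing the violation already carries selected mass $>2l(R)^\beta-l(Q_j)^\beta\geq l(R)^\beta$ because $l(Q_j)\leq l(R)$, and taking the maximal such $R$'s yields a non-overlapping family $\{\tilde{Q}_k\}$ satisfying (1) and (3). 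In the example above this process selects five of the eight intervals and returns $\tilde{Q}_1=[0,2^{-4})$ rather than $[0,2^{-2})$. I would abandon the maximal-heavy-cube ansatz in favor of this size-ordered greedy selection, where the $\tilde{Q}_k$'s are an output of the algorithm rather than an input.
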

The condition (2) is sometimes called a packing condition, an important consequence of which is the existence of a constant $C'>0$ such that 
\begin{align*}
\sum_{k}\int_{Q_{j_k}}  f \;d\mathcal{H}^{\beta,Q}_{\infty} \leq C' \int_{\cup_{k} Q_{j_k}} f\;d\mathcal{H}^{\beta,Q}_{\infty} .
\end{align*}
for every non-negative $f \in L^1(\cup_{k} Q_{j_k};\mathcal{H}^{\beta,Q}_{\infty})$.

\section{Capacitary Differentiation Theorems}\label{content-jn}

The purpose of this section is to establish for the dyadic Hausdorff content adapted to an arbitrary cube some analogues of classical results known for the Lebesgue measure.


%
%

The first result of this section is an analogue of D. R. Adams' result \cite{AdamsChoquet} on the boundedness of the Hardy-Littlewood maximal function on the space 
\begin{align*}
L^1(\mathbb{R}^d;\mathcal{H}^\beta_\infty) 
\end{align*}
for the dyadic $\beta$-Hausdorff maximal function associated to $\mathcal{H}^{\beta,Q}_{\infty}$:
\begin{align*}
\mathcal{M}^{\beta,Q}_\infty f(x) \vcentcolon= \sup_{Q' \in\mathcal{D}(Q)} \chi_{Q'}(x) \frac{1}{l(Q')^\beta} \int_{Q'} |f| \;\;d\mathcal{H}^{\beta,Q}_{\infty}.
\end{align*}
Note that the equivalence of all of the Hausdorff contents implies that
\begin{align*}
L^1(\mathbb{R}^d;\mathcal{H}^\beta_\infty) = L^1(\mathbb{R}^d;\mathcal{H}^{\beta,Q}_{\infty})
\end{align*}

In particular we prove
\begin{theorem}\label{max}
Let $C'>0$ be the constant such that
\begin{align*}
\sum_{k}\int_{Q_{k}}  f \;d\mathcal{H}^{\beta,Q}_{\infty} \leq C' \int_{\cup_{k} Q_{k}} f\;d\mathcal{H}^{\beta,Q}_{\infty} .
\end{align*}
for every family of non-overlapping dyadic cubes $\{Q_{k}\}$ subordinate to $Q$ which satisfy
\begin{align*}
\sum_{Q_{k} \subset Q'}l(Q_{k})^\beta \leq 2l(Q')^\beta, \text{ for each dyadic cube } Q' \in \mathcal{D}(Q).
\end{align*}
Then
\begin{align*}
\mathcal{H}^{\beta,Q}_{\infty}\left(\left\{ \mathcal{M}^{\beta,Q}_\infty f>t\right\}\right) \leq \frac{C'}{t} \int_{\mathbb{R}^d} f\;d\mathcal{H}^{\beta,Q}_{\infty}
\end{align*}
for all $f \in L^1(\mathbb{R}^d;\mathcal{H}^{\beta,Q}_{\infty})$. 
\end{theorem}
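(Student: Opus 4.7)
The plan is to follow a Calder\'on--Zygmund-style selection argument adapted to the dyadic Hausdorff content, with the nonlinearity of the Choquet integral handled via Proposition~\ref{ov}.

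First I would reduce to $f \geq 0$ and, for each $x$ in the level set $E_t := \{\mathcal{M}^{\beta,Q}_\infty f > t\}$, select a dyadic cube $Q' \in \mathcal{D}(Q)$ with $x \in Q'$ and $\int_{Q'} f\, d\mathcal{H}^{\beta,Q}_\infty > t\, l(Q')^\beta$. Such cubes have their side length bounded by $l(Q')^\beta < \|f\|_{L^1(\mathbb{R}^d;\mathcal{H}^{\beta,Q}_\infty)}/t$, so each $x$ lies in some maximal such cube, and collecting these produces a pairwise disjoint family $\{Q_j\}$ of dyadic cubes with
\[
E_t \subset \bigcup_j Q_j, \qquad t\, l(Q_j)^\beta < \int_{Q_j} f\, d\mathcal{H}^{\beta,Q}_\infty \quad \text{for each } j.
\]

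Next, I would apply Proposition~\ref{ov} to $\{Q_j\}$ to extract a subfamily $\{Q_{j_k}\}$ satisfying the packing condition (2), together with a non-overlapping family of ancestors $\{\tilde{Q}_m\}$ satisfying (1) and (3). By countable subadditivity of $\mathcal{H}^{\beta,Q}_\infty$ applied to the covering (1),
\[
\mathcal{H}^{\beta,Q}_\infty(E_t) \leq \sum_k l(Q_{j_k})^\beta + \sum_m l(\tilde{Q}_m)^\beta.
\]
Since the $\tilde{Q}_m$ are non-overlapping, each $Q_{j_k}$ lies in at most one of them, so property (3) gives $\sum_m l(\tilde{Q}_m)^\beta \leq \sum_m \sum_{Q_{j_k} \subset \tilde{Q}_m} l(Q_{j_k})^\beta \leq \sum_k l(Q_{j_k})^\beta$. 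Invoking the maximal-cube inequality and then the hypothesized bound---applicable precisely because the packing condition (2) is in force---yields
\[
\mathcal{H}^{\beta,Q}_\infty(E_t) \;\leq\; 2\sum_k l(Q_{j_k})^\beta \;\leq\; \frac{2}{t}\sum_k \int_{Q_{j_k}} f\, d\mathcal{H}^{\beta,Q}_\infty \;\leq\; \frac{2C'}{t}\int_{\mathbb{R}^d} f\, d\mathcal{H}^{\beta,Q}_\infty,
\]
which is the desired weak-type estimate up to the harmless factor $2$ that can be absorbed into the constant.

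The main obstacle, as anticipated, is the failure of the naive bound $\sum_k \int_{Q_{j_k}} f\, d\mathcal{H}^{\beta,Q}_\infty \leq \int_{\cup_k Q_{j_k}} f\, d\mathcal{H}^{\beta,Q}_\infty$ for arbitrary pairwise disjoint dyadic families---a consequence of the nonlinearity of the Choquet integral. The Melnikov-type reorganization of Proposition~\ref{ov} is the key technical device that enforces the packing condition needed to activate the hypothesized sub-additive integration step, turning what would be a one-line argument in the measure-theoretic setting into a genuine capacitary statement.
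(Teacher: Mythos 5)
Your proposal is correct and follows essentially the same route as the paper: reduce to $f\geq 0$, take the maximal dyadic cubes $\{Q_j\}$ on which the average exceeds $t$, apply Proposition~\ref{ov} to obtain $\{Q_{j_k}\}$ and $\{\tilde{Q}_m\}$, bound $\mathcal{H}^{\beta,Q}_\infty(E_t)$ by the total side-length mass, and finish with the maximal-cube inequality plus the packing estimate. The one place you diverge is the counting: you bound $\mathcal{H}^{\beta,Q}_\infty(E_t)\leq \sum_k l(Q_{j_k})^\beta+\sum_m l(\tilde{Q}_m)^\beta\leq 2\sum_k l(Q_{j_k})^\beta$, which yields $2C'/t$. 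The paper avoids the extra factor of $2$ by pruning the cover: it discards any $Q_{j_k}$ already contained in some ancestor $\tilde{Q}_m$, so $\mathcal{H}^{\beta,Q}_\infty(E_t)\leq \sum_{k,\,Q_{j_k}\not\subset \tilde{Q}_m} l(Q_{j_k})^\beta+\sum_m l(\tilde{Q}_m)^\beta$, and then property~(3) of Proposition~\ref{ov} folds the second sum into the missing $Q_{j_k}$'s, giving $\sum_k l(Q_{j_k})^\beta$ outright and hence the clean constant $C'/t$ stated in the theorem. This is a cosmetic discrepancy for the downstream applications, but if you want the constant as stated you should perform the same redundancy removal before summing.
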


\begin{proof}
Without loss of generality we suppose $f$ is non-negative.  By the definition of the dyadic maximal function we have 
\begin{align*}
\{\mathcal{M}^{\beta,Q}_\infty f >t \} = \bigcup_{i=1}^\infty Q_i
\end{align*}
where each of the $Q_i$ satisfies
\begin{align}\label{maximal_cube_t}
   l(Q_i)^\beta < \frac{1}{t} \int_{Q_i} f \; d\mathcal{H}^{\beta,Q}_{\infty}.
\end{align}
Let $\{Q_j \}$ be the maximal collection of these dyadic cubes, each $Q_j$ of which satisfies \eqref{maximal_cube_t} and let $\{Q_{j_k}\}$ and $\{\tilde{Q}_k\}$ be the families of cubes obtained by an application of Proposition \ref{ov} to this family.  By only selecting necessary cubes,  $Q_{j_k} \not \subseteq \tilde{Q}_m$ for some $Q_m$ in the original family with $\tilde{Q}_m$ selected as a covering cube, we have 
\begin{align*}
    \mathcal{H}^{\beta,Q}_\infty (\{\mathcal{M}^{\beta,Q}_\infty f >t \}) &\leq \sum_{k, Q_{j_k} \not \subseteq \tilde{Q}_m} \mathcal{H}^{\beta,Q}_\infty (Q_{j_k}) +\sum_k \mathcal{H}^{\beta,Q}_\infty (\tilde{Q}_k) \\
    &= \sum_{k, Q_{j_k} \not \subseteq \tilde{Q}_m} l(Q_{j_k})^\beta +\sum_k l(\tilde{Q}_k)^\beta \\
    &\leq \sum_{k} l(Q_{j_k})^\beta.
\end{align*}
However, since $Q_{j_k}$ satisfies \eqref{maximal_cube_t} we find that
\begin{align*}
       \mathcal{H}^{\beta,Q}_\infty (\{\mathcal{M}^{\beta,Q}_\infty f >t \})  \leq  \sum_{k} \frac{1}{t} \int_{Q_{j_k}} f \; d\mathcal{H}^{\beta,Q}_{\infty}.
\end{align*}
Finally, the packing condition satisfied by $Q_{j_k}$ implies
\begin{align*}
   \sum_{k} \frac{1}{t} \int_{Q_{j_k}} f \; d\mathcal{H}^{\beta,Q}_{\infty} \leq C' \frac{1}{t} \int_{\cup Q_{j_k}} f \; d\mathcal{H}^{\beta,Q}_{\infty},
\end{align*}
which gives the desired conclusion.
\end{proof}

We next establish a precise estimate for upper density of the dyadic Hausdorff content.  Our argument follows that of Evans and Gariepy \cite[p.~74]{EvansGariepy}, where we observe that the use of the dyadic content yields a slightly better result than the upper and lower bounds one has for the upper density of the spherical content.
\begin{lemma}\label{density}
Let $E \subset \mathbb{R}^n$. Then
\begin{align*}
    \limsup_{Q' \to x} \frac{\mathcal{H}^{\beta,Q}_\infty (E \cap Q')}{\ell (Q')^\beta} =1
\end{align*}
for $\mathcal{H}^\beta$ -a.e. $x \in E,$ where the limit is taken over all dyadic cubes $Q' \in \mathcal{D}(Q)$ containing $x$ as $l(Q')$ tends to 0. 
\end{lemma}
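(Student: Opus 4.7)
The upper bound is immediate: for any $x \in E$ and any dyadic cube $Q' \in \mathcal{D}(Q)$ containing $x$, the singleton cover $\{Q'\}$ gives $\mathcal{H}^{\beta,Q}_\infty(E \cap Q') \leq l(Q')^\beta$, so the ratio is already bounded by $1$ at every scale and every $x \in E$. All the work is in the matching lower bound, which I would prove by contradiction. Assume the set of $x \in E$ where the limsup is strictly less than $1$ has positive $\mathcal{H}^\beta$-measure; for each rational $t \in (0,1)$ and each integer $n \geq 1$, set
\[
A_{t,n} := \left\{x \in E : \mathcal{H}^{\beta,Q}_\infty(E \cap Q') \leq t\, l(Q')^\beta \text{ for every } Q' \in \mathcal{D}(Q) \text{ with } x \in Q',\; l(Q') < 1/n\right\}.
\]
The assumed set is contained in $\bigcup_{t,n} A_{t,n}$, so by countable subadditivity of $\mathcal{H}^\beta$ some $A_{t,n}$, which I rename $A$, satisfies $\mathcal{H}^\beta(A) > 0$.

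The heart of the proof is a single self-improvement step at a fixed scale. Fix $\delta < 1/n$ and $\eta > 0$, and select a near-optimal dyadic cover $\{Q_i\} \subset \mathcal{D}(Q)$ of $A$ with $l(Q_i) < \delta$ and $\sum_i l(Q_i)^\beta \leq \mathcal{H}^{\beta,Q}_\delta(A) + \eta$. After discarding cubes disjoint from $A$, each remaining $Q_i$ contains a point of $A$ and has $l(Q_i) < 1/n$, so the defining property of $A$ gives $\mathcal{H}^{\beta,Q}_\infty(E \cap Q_i) \leq t\, l(Q_i)^\beta$. For each such $i$ I would choose a dyadic cover $\{P_{ij}\}_j$ of $E \cap Q_i$ with $\sum_j l(P_{ij})^\beta \leq t\, l(Q_i)^\beta + \eta\, 2^{-i}$; the key observation is that any cube of this cover that properly contains $Q_i$ may be truncated to $Q_i$ itself, only shrinking the $\beta$-sum, so after cleanup $P_{ij} \subseteq Q_i$ and in particular $l(P_{ij}) < \delta$. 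The pooled family $\{P_{ij}\}_{i,j}$ is then a genuine $\delta$-dyadic cover of $A \subset E \cap \bigcup_i Q_i$, yielding
\[
\mathcal{H}^{\beta,Q}_\delta(A) \leq \sum_{i,j} l(P_{ij})^\beta \leq t \sum_i l(Q_i)^\beta + \eta \leq t\, \mathcal{H}^{\beta,Q}_\delta(A) + (1+t)\eta.
\]
Since $\mathcal{H}^{\beta,Q}_\delta(A) \leq \mathcal{H}^{\beta,Q}_\delta(Q) < \infty$, sending $\eta \to 0$ forces $\mathcal{H}^{\beta,Q}_\delta(A) = 0$ for every $\delta < 1/n$; converting each dyadic cover to a spherical one scale-by-scale (every dyadic cube inscribes in a ball of comparable radius) gives $\mathcal{H}^\beta(A) = 0$, contradicting the choice of $A$.

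The main subtlety I anticipate is precisely the scale-matching inside the iteration: the inner refinements $\{P_{ij}\}$ must genuinely live at the $\delta$-scale for the resulting bound to be substituted back into the definition of $\mathcal{H}^{\beta,Q}_\delta(A)$ and close the fixed-point inequality $x \leq tx + C\eta$. The dyadic structure makes this essentially free via the ancestor-truncation trick, and it is exactly this feature that lets the dyadic limsup attain $1$ on the nose, rather than the weaker two-sided $2^{-\beta} \leq \cdot \leq 1$ range one would obtain via a Vitali covering for the spherical content in the spirit of Evans–Gariepy.
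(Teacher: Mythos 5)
Your proof is correct and takes essentially the same approach as the paper's: isolate the set of low-density points at a fixed scale--threshold pair, exploit the nested structure of dyadic cubes (your ancestor-truncation is exactly the paper's identity $\mathcal{H}^{\beta,Q}_\delta(E\cap Q_i)=\mathcal{H}^{\beta,Q}_\infty(E\cap Q_i)$ for $l(Q_i)\le\delta$) to close the self-referential inequality $\mathcal{H}^{\beta,Q}_\delta(A)\le\tau\,\mathcal{H}^{\beta,Q}_\delta(A)$, and conclude by a countable union over thresholds and scales. The paper phrases the fixed-point step as an infimum over nonoverlapping covers rather than via explicit $\eta$-approximations, and passes to $\mathcal{H}^\beta(A)=0$ directly through the null-set equivalence of content and measure rather than scale-by-scale, but these are cosmetic differences.
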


\begin{proof}

As density is a local result, by taking $E\cap Q$ for a sufficiently large cube $Q$, we may assume $E$ is bounded and therefore $\mathcal{H}^{\beta,Q}_\infty (E)<+\infty$.

For every $\delta > 0$ and $0< \tau <1$, we define 
\begin{align*}
    E(\delta, \tau ) : = \left\{ x \in E : \  \frac{\mathcal{H}^{\beta,Q}_\infty (E \cap Q')}{l(Q')^\beta } \leq \tau \text{ for every } Q' \in \mathcal{D}(Q) \text{ such that } x\in Q',\; l(Q') \leq \delta \right\}.
\end{align*}
We claim that $\mathcal{H}^\beta(E(\delta, \tau)) =0$ for every $\delta > 0$ and $0< \tau <1$.  To this end, we introduce
\begin{align*}
\mathcal{H}^{\beta,Q}_{\delta}(E)\vcentcolon= \inf \left\{\sum_{i=1}^\infty l(Q_i)^\beta : E \subset \bigcup_{i=1}^\infty Q_i, Q_i \in \mathcal{D}(Q), l(Q_i)\leq \delta \right\},
\end{align*} 
and observe that $\mathcal{H}^{\beta,Q}_{\delta}$ is an outer measure.

Thus for every collection of nonoverlapping dyadic cubes $\{ Q_i \}_{i=1}^\infty$ such that $E(\delta,\tau) \subset \cup^\infty_{i = 1} Q_i$, $Q_i \cap E(\delta, \tau) \neq \emptyset$ and $l(Q_i) \leq \delta$, we have by countable subadditivity, monotonicity, and the fact that $Q_i \cap E(\delta, \tau) \neq \emptyset$ with $l(Q_i) \leq \delta$
\begin{align*}
\mathcal{H}^{\beta,Q}_\delta(E(\delta, \tau)) &\leq \sum^\infty_{i =1 }\mathcal{H}^{\beta,Q}_\delta(E(\delta, \tau) \cap Q_i)\\
& \leq  \sum^\infty_{i =1 }\mathcal{H}^{\beta,Q}_\delta(E \cap Q_i)\\
& =  \sum^\infty_{i =1 }\mathcal{H}^{\beta,Q}_\infty(E \cap Q_i)\\
& \leq \tau  \sum^\infty_{i =1 } l(Q_i)^\beta.
\end{align*}
The fact $E(\delta,\tau) \subset E$ and $\mathcal{H}^{\beta,Q}_\infty (E)<+\infty$ implies $\mathcal{H}^{\beta,Q}_\infty (E(\delta,\tau))<+\infty$, and therefore $\mathcal{H}^{\beta,Q}_\delta (E(\delta,\tau))<+\infty$.  Thus, taking the infimum over all such families of cubes in the preceding inequality we deduce that $\mathcal{H}^{\beta,Q}_\delta(E(\delta, \tau)) \leq \tau \mathcal{H}^{\beta,Q}_\delta(E(\delta, \tau))$. Since $0< \tau<1$, we have $\mathcal{H}^{\beta,Q}_\delta(E(\delta, \tau)) =0$ and therefore $\mathcal{H}^\beta (E(\delta, \tau)) =0$ (which follows from \cite[Lemma 1 on p.~64]{EvansGariepy} and the equivalence of the dyadic and spherical contents, for example), which proves the claim.

Now suppose that $x \in E $ and 
\begin{align*}
\limsup\limits_{Q' \to x}  \frac{\mathcal{H}^{\beta,Q}_\infty(E \cap Q')}{l(Q')^\beta } < 1.
\end{align*}
Then using the fact that the inequality is strict and the definition of the limit supremum, we can find $\delta=\delta_x \in (0,1)$ such that 
\begin{align*}
\frac{\mathcal{H}^{\beta,Q}_\infty(E\cap Q')}{l(Q')^\beta} \leq 1-\delta
\end{align*}
for every dyadic cube $Q'$ satisfying $x\in Q'$ and $l(Q') \leq \delta,$ which means that $ x\in E(\delta_x, 1-\delta_x)$. Using the fact that 
\begin{align*}
    E(\delta_1 ,1- \delta_1) \subset E(\delta_2, 1-\delta_2) 
\end{align*}
for $0<\delta_2 \leq \delta_1 <1$, this means that 
\begin{align*}
x \in E\left(\frac{1}{k}, 1- \frac{1}{k}\right)
\end{align*}
for any $k \in \mathbb{N}$ such that $1/k<\delta_x$.  In particular, we have the set inclusion
\begin{align*}
    \left\{ x\in E : \ \limsup_{Q' \to x } \frac{\mathcal{H}^{\beta,Q}_\infty(E \cap Q')}{l(Q')^\beta } < 1 \right\} \subset \bigcup\limits^\infty_{k =1} E\left(\frac{1}{k}, 1- \frac{1}{k}\right).
\end{align*}
As the sets on the right hand side have $\mathcal{H}^\beta$ measure zero, we deduce
\begin{align*}
    \mathcal{H}^\beta \left(\left\{ x\in E : \ \limsup_{Q \to x } \frac{\mathcal{H}^\beta_\infty(Q \cap E)}{\ell(Q)^\beta } < 1 \right\}\right) =0,
\end{align*}
and the proof is complete.
\end{proof}

Next, we establish a weak Lebesgue differentiation theorem for the Hausdorff content:
\begin{theorem}\label{differentialhausdorfflimsup}
Let $f \in L^1(\mathbb{R}^d;\mathcal{H}^{\beta,Q}_\infty)$ be non-negative. Then for $\mathcal{H}^\beta$ -a.e. $x\in \mathbb{R}^d$, we have 
\begin{align}
    \limsup_{Q' \to x} \frac{1}{l(Q')^\beta} \int_{Q'} f\; d\mathcal{H}^{\beta,Q}_\infty = f(x).
\end{align}
\end{theorem}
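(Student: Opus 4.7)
My plan is to follow the classical strategy for Lebesgue's differentiation theorem: approximate $f$ by bounded continuous functions, dispose of the continuous piece directly, and absorb the error through the weak-type maximal estimate of Theorem~\ref{max}.  The dyadic structure plays an essential simplifying role, since for any $Q'\in\mathcal{D}(Q)$ one has $\mathcal{H}^{\beta,Q}_\infty(Q')=l(Q')^\beta$ exactly, and so constants integrate just as they would against Lebesgue measure.  Notably, Lemma~\ref{density} is not required for this particular statement, although it offers another route to handle the continuous piece.

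First, by the density statement of Theorem~\ref{functionspace}, for every $\delta>0$ I pick a non-negative $g\in C_b(\mathbb{R}^d)$ with $\int_{\mathbb{R}^d}|f-g|\,d\mathcal{H}^{\beta,Q}_\infty<\delta$; non-negativity is arranged by passing to the positive part, since $f\geq 0$ forces $|f-g^+|\leq|f-g|$.  For each $x\in\mathbb{R}^d$, the continuity of $g$ together with the monotonicity of the Choquet integral on non-negative functions and the identity $\mathcal{H}^{\beta,Q}_\infty(Q')=l(Q')^\beta$ yields
\begin{align*}
\lim_{Q'\to x}\frac{1}{l(Q')^\beta}\int_{Q'} g\,d\mathcal{H}^{\beta,Q}_\infty = g(x),
\end{align*}
where the limit is taken along $Q'\in\mathcal{D}(Q)$ containing $x$ with $l(Q')\to 0$.

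Next, the pointwise bounds $f\leq g+|f-g|$ and $g\leq f+|f-g|$, combined with the sublinearity of Proposition~\ref{sublinear}, give for every dyadic $Q'\ni x$
\begin{align*}
\left|\frac{1}{l(Q')^\beta}\int_{Q'} f\,d\mathcal{H}^{\beta,Q}_\infty - \frac{1}{l(Q')^\beta}\int_{Q'} g\,d\mathcal{H}^{\beta,Q}_\infty\right| \leq \mathcal{M}^{\beta,Q}_\infty(|f-g|)(x).
\end{align*}
Taking $\limsup$ as $Q'\to x$ and using the previous display (together with the observation that the continuous piece has a genuine limit, so that it passes freely through $\limsup$ and $\liminf$) produces
\begin{align*}
\left|\limsup_{Q'\to x}\frac{1}{l(Q')^\beta}\int_{Q'} f\,d\mathcal{H}^{\beta,Q}_\infty - f(x)\right| \leq |f(x)-g(x)| + \mathcal{M}^{\beta,Q}_\infty(|f-g|)(x).
\end{align*}
For each fixed $t>0$, Chebyshev's inequality for the Choquet integral bounds the $\mathcal{H}^{\beta,Q}_\infty$-content of $\{|f-g|>t/2\}$ by $2\delta/t$, while Theorem~\ref{max} bounds that of $\{\mathcal{M}^{\beta,Q}_\infty(|f-g|)>t/2\}$ by $2C'\delta/t$.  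Sending $\delta\to 0$ along a sequence for each $t=1/n$ and taking a countable union places the exceptional set in $\mathcal{H}^{\beta,Q}_\infty$-null, hence $\mathcal{H}^\beta$-null, territory.

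The main obstacle is the nonlinearity of the Choquet integral: the discrepancy between $\int f\,d\mathcal{H}^{\beta,Q}_\infty$ and $\int g\,d\mathcal{H}^{\beta,Q}_\infty$ cannot be read off from $\int(f-g)\,d\mathcal{H}^{\beta,Q}_\infty$ and must instead be extracted from the pointwise inequalities above combined with sublinearity.  A secondary subtlety is that passing $\limsup$ across a difference requires the continuous piece to have an honest limit, which is precisely what the identity $\mathcal{H}^{\beta,Q}_\infty(Q')=l(Q')^\beta$ on dyadic cubes provides; this is the structural reason the dyadic contents are much more tractable here than the spherical content would be.
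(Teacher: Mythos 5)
Your proof is correct and takes a genuinely different route from the paper's for the lower inequality. The paper splits the claim into $\limsup \leq f(x)$ and $\limsup \geq f(x)$: for the first it uses precisely your $C_b$-approximation plus Theorem~\ref{max} argument, but for the second it invokes Lemma~\ref{density}, the Lebesgue-density statement for $\mathcal{H}^{\beta,Q}_\infty$, applied to the level sets $L_t=\{f>t\}$. You instead observe that the two pointwise inequalities $f\le g+|f-g|$ and $g\le f+|f-g|$, fed through monotonicity and sublinearity of the Choquet integral, give the symmetric bound
\begin{align*}
\left|\frac{1}{l(Q')^\beta}\int_{Q'} f\,d\mathcal{H}^{\beta,Q}_\infty-\frac{1}{l(Q')^\beta}\int_{Q'} g\,d\mathcal{H}^{\beta,Q}_\infty\right|\le \mathcal{M}^{\beta,Q}_\infty(|f-g|)(x),
\end{align*}
so a single approximation step controls both directions at once. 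Your claim that Lemma~\ref{density} is dispensable here is therefore accurate. In fact your argument yields a strictly stronger conclusion than the theorem states: the same two-sided bound controls $\liminf$ from below as well, so the genuine limit (not merely the $\limsup$) equals $f(x)$ $\mathcal{H}^\beta$-a.e., whereas the paper's density-lemma route only reaches the $\limsup$ because Lemma~\ref{density} itself is only a $\limsup$ statement. The paper's route does purchase the freestanding density estimate of Lemma~\ref{density}, which has independent interest, but if one already has Theorem~\ref{max} your route is both shorter and sharper.
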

\begin{proof}

We first prove that for $\mathcal{H}^\beta$ -a.e. $x\in \mathbb{R}^d$, we have
\begin{align}\label{Lebesge_smaller}
     \limsup_{Q' \to x} \frac{1}{l(Q')^\beta} \int_{Q'} f\;  d\mathcal{H}^{\beta,Q}_\infty \leq  f(x).
\end{align}
For $\epsilon>0$, we define 
\begin{align*}
    A_\epsilon\vcentcolon= \left\{ x\in \mathbb{R}^n : \ \epsilon <  \limsup_{Q' \to x} \frac{1}{l(Q')^\beta} \int_{Q'} f\;  d\mathcal{H}^{\beta,Q}_\infty - f(x) \right\}.
\end{align*}
Note that inequality (\ref{Lebesge_smaller}) will be verified once we show that $\mathcal{H}^{\beta,Q}_\infty(A_\epsilon) = 0$ for every $\epsilon>0.$
Since $f \in L^1(\mathbb{R}^d; \mathcal{H}^{\beta,Q}_\infty)$, there exists a sequence of functions $\{f_k\} \subset C_b(\mathbb{R}^d) \cap L^1(\mathbb{R}^d;\mathcal{H}^{\beta,Q}_\infty)$ such that 
\begin{align*}
    \int_{\mathbb{R}^d} |f - f_k| \;d\mathcal{H}^{\beta,Q}_\infty \leq \frac{1}{k}.
\end{align*}
In particular, we have by subadditivity of the integral
\begin{align*}
    \frac{1}{l(Q')^\beta} \int_{Q'} f\;  d\mathcal{H}^{\beta,Q}_\infty - f(x) & \leq \frac{1}{l(Q')^\beta} \int_{Q'} |f(\cdot) - f(x)| \; d \mathcal{H}^{\beta,Q}_\infty\\
    &\leq \frac{1}{l(Q')^\beta} \int_{Q'} |f(\cdot) - f_k(\cdot)| + |f_k(\cdot) - f_k(x)|+ |f_k(x) - f(x)| \; d\mathcal{H}^{\beta,Q}_\infty\\
    &\leq \frac{1}{l(Q')^\beta}\int_{Q'}  |f(\cdot) - f_k(\cdot)|  \; d\mathcal{H}^{\beta,Q}_\infty \\
    &+ \frac{1}{l(Q')^\beta}\int_{Q'} |f_k(\cdot) - f_k(x)| \; d\mathcal{H}^{\beta,Q}_\infty\\
    &+ |f_k(x) - f(x)|.
    \end{align*}
Since $f_k$ is continuous for each $k$, we have 
\begin{align*}
    \limsup_{Q' \to x}\frac{1}{l(Q')^\beta}\int_{Q'} |f_k(\cdot) - f_k(x)| \; d\mathcal{H}^{\beta,Q}_\infty = 0.
\end{align*}
It follows that for every $\epsilon> 0$, we have the set inclusion
\begin{align*}
    A_\epsilon \subset \left \{ x\in \mathbb{R}^n : \  \mathcal{M}_\infty^\beta |f - f_k|(x) > \frac{\epsilon}{2}   \right\} \cup \left\{ x\in \mathbb{R}^n : \  |f_k(x) - f(x)| > \frac{\epsilon}{2}  \right\}: = A^{1,k}_\epsilon \cup A^{2,k}_\epsilon.
\end{align*}
and therefore
\begin{align*}
  \mathcal{H}^{\beta,Q}_\infty(A_\epsilon) \leq  \mathcal{H}^{\beta,Q}_\infty(A^{1,k}_\epsilon) + \mathcal{H}^{\beta,Q}_\infty(A^{2,k}_\epsilon).
\end{align*}
Finally, an application of Theorem \ref{max} to $A^{1,k}_\epsilon$ and Chebychev's inequality to $A^{2,k}_\epsilon$, along with the density of $C_0(\mathbb{R}^d)$, yields that 
\begin{align*}
\limsup_{k \to \infty} \mathcal{H}^{\beta,Q}_\infty(A^{1,k}_\epsilon) &=0,\\
\limsup_{k \to \infty} \mathcal{H}^{\beta,Q}_\infty(A^{2,k}_\epsilon)&=0.
\end{align*}
As the Hausdorff content and Hausdorff measure have the same null sets \cite[Lemma 1 on p.~64]{EvansGariepy}, this completes the proof of \eqref{Lebesge_smaller}.

We next show that for $\mathcal{H}^\beta$ -a.e. $x\in \mathbb{R}^n$, we have
\begin{align}\label{differentialgeq}
     \limsup_{Q' \to x} \frac{1}{l(Q')^\beta} \int_{Q'} f\;  d\mathcal{H}^{\beta,Q}_\infty \geq f(x).
\end{align}
Let $L_t: = \{ x \in \mathbb{R}^d : \ f(x )> t\}$ for every $t>0$. Then for every $x \in L_t$, we have
\begin{align*}
    \limsup_{Q' \to x} \frac{1}{l(Q')^\beta} \int_{Q'} f\;  d\mathcal{H}^{\beta,Q}_\infty
    & \geq  \limsup_{Q' \to x} \frac{1}{l(Q')^\beta} \int_{Q' \cap L_t} t\;  d\mathcal{H}^{\beta,Q}_\infty\\
    & = \limsup_{Q' \to x}\frac{\mathcal{H}^{\beta,Q}_\infty(Q' \cap L_t)}{l(Q')^\beta}  \ t.
\end{align*}
An application of Lemma \ref{density} yields
\begin{align*}
     \limsup_{Q' \to x} \frac{1}{l(Q')^\beta} \int_{Q'} f\;  d\mathcal{H}^{\beta,Q}_\infty \geq t
\end{align*}
for $\mathcal{H}^\beta$ -a.e. $x \in L_t$. That is, for every $t>0$, we have
\begin{align*}
   \mathcal{H}^\beta\left( \left\{x \in \mathbb{R}^d : \      \limsup_{Q' \to x} \frac{1}{l(Q')^\beta} \int_{Q'} f\;  d\mathcal{H}^{\beta,Q}_\infty< t <f(x) \right\}      \right) = 0.
\end{align*}
In particular, we observe that
\begin{align*}
   & \left\{x \in \mathbb{R}^d : \      \limsup_{Q' \to x} \frac{1}{l(Q')^\beta} \int_{Q'} f\;  d\mathcal{H}^{\beta,Q}_\infty <f(x) \right\} \\
    \subset &\bigcup_{t \in \mathbb{Q}^{+}} \left\{x \in \mathbb{R}^d : \      \limsup_{Q' \to x} \frac{1}{l(Q')^\beta} \int_{Q'} f\;  d\mathcal{H}^{\beta,Q}_\infty< t <f(x) \right\}, 
\end{align*}
where $\mathbb{Q^+}$ denotes the set of all positive rational numbers. Using the subadditivity property of $\mathcal{H}^\beta$, we conclude that 
\begin{align*}
    \mathcal{H}^\beta\left( \left\{x \in \mathbb{R}^d : \ \limsup_{Q' \to x} \frac{1}{l(Q')^\beta} \int_{Q'} f \; d\mathcal{H}^{\beta,Q}_\infty <f(x) \right\} \right) = 0,
\end{align*}
which implies the claim \eqref{differentialgeq}.
\end{proof}

As we show, this one-sided Lebesgue differentiation theorem is sufficient to obtain a Calder\'on-Zygmund decomposition of these spaces:
\begin{theorem}\label{cz}
Let $Q$ be a cube in $\mathbb{R}^d$ and $f \in L^1(Q,\mathcal{H}^{\beta,Q}_\infty)$. Then for every $\lambda>0$ such that 
\begin{align*}
   \lambda \geq \frac{1}{l(Q)^\beta} \int_Q |f| \; d\mathcal{H}^{\beta,Q}_\infty,
\end{align*}
there exists a countable collection of non-overlapping dyadic cubes $\{ Q_k\}\subset Q$ subordinate to $Q$ such that 
\begin{enumerate}
    \item $\lambda< \frac{1}{l(Q_k)^\beta} \int_{Q_k} |f| \; \mathcal{H}^{\beta,Q}_\infty \leq 2^\beta \lambda$,
    \item $|f(x)| \leq \lambda$ for $\mathcal{H}^\beta$ -a.e. $x\in Q \setminus \bigcup_k Q_k$,
\end{enumerate}

\end{theorem}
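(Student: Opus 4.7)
The plan is to mimic the classical Calderón--Zygmund stopping-time selection, replacing the Lebesgue measure by $\mathcal{H}^{\beta,Q}_\infty$. The two non-routine inputs in the capacitary setting are (a) monotonicity of the Choquet integral on nested pairs of sets, which comes directly from the fact that $\mathcal{H}^{\beta,Q}_\infty$ is an outer measure, and (b) pointwise control of $|f|$ by small-scale dyadic averages, which is precisely Theorem \ref{differentialhausdorfflimsup}.

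I begin with the observation that the hypothesis $\lambda\geq\frac{1}{l(Q)^\beta}\int_Q|f|\,d\mathcal{H}^{\beta,Q}_\infty$ says the root of the dyadic tree $\mathcal{D}(Q)$ fails the selection criterion. I then bisect $Q$ into its $2^d$ dyadic children and select any child $Q'$ satisfying
\[\frac{1}{l(Q')^\beta}\int_{Q'}|f|\,d\mathcal{H}^{\beta,Q}_\infty>\lambda,\]
while further bisecting those that do not, and iterating. This produces a (finite or countable) family $\{Q_k\}\subset\mathcal{D}(Q)$ of maximal selected cubes, pairwise non-overlapping by construction.

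For property (1), the lower bound is built into the selection rule. For the upper bound, let $\hat Q_k$ denote the dyadic parent of $Q_k$; by maximality, $\hat Q_k$ was not selected, so its average is at most $\lambda$. The inclusion $Q_k\subset\hat Q_k$, together with monotonicity of the outer measure $\mathcal{H}^{\beta,Q}_\infty$ applied to the level sets $\{|f|>t\}\cap Q_k\subset\{|f|>t\}\cap\hat Q_k$, yields the monotonicity $\int_{Q_k}|f|\,d\mathcal{H}^{\beta,Q}_\infty\leq\int_{\hat Q_k}|f|\,d\mathcal{H}^{\beta,Q}_\infty$, and dividing by $l(Q_k)^\beta=2^{-\beta}l(\hat Q_k)^\beta$ gives the bound $2^\beta\lambda$. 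For property (2), if $x\in Q\setminus\bigcup_k Q_k$, then every $Q'\in\mathcal{D}(Q)$ containing $x$ failed the selection criterion at every stage and thus satisfies $\frac{1}{l(Q')^\beta}\int_{Q'}|f|\,d\mathcal{H}^{\beta,Q}_\infty\leq\lambda$. Extending $f$ by zero outside $Q$ (which preserves the $L^1(\mathbb{R}^d;\mathcal{H}^{\beta,Q}_\infty)$-norm since the nontrivial level sets lie inside $Q$) and applying Theorem \ref{differentialhausdorfflimsup} gives $|f(x)|=\limsup_{Q'\to x}\frac{1}{l(Q')^\beta}\int_{Q'}|f|\,d\mathcal{H}^{\beta,Q}_\infty\leq\lambda$ for $\mathcal{H}^\beta$-a.e. such $x$.

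The main technical work has been absorbed into Theorem \ref{differentialhausdorfflimsup}, so the argument above reduces to stopping-time bookkeeping. The subtlety one might expect — that the Choquet integral is only subadditive, not additive — is irrelevant here because the stopping-time construction only invokes monotonicity on nested pairs of sets.
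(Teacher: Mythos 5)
Your proof is correct and follows essentially the same route as the paper: both select the maximal dyadic cubes whose $\mathcal{H}^{\beta,Q}_\infty$-average exceeds $\lambda$ (your stopping-time bisection produces exactly the paper's ``maximal subcollection''), both get the upper bound in (1) from the parent's failure of the criterion together with monotonicity of the Choquet integral over nested sets, and both derive (2) from the weak Lebesgue differentiation theorem (Theorem \ref{differentialhausdorfflimsup}). Your extra remark about extending $f$ by zero to match the hypotheses of Theorem \ref{differentialhausdorfflimsup} is a minor but welcome precision that the paper leaves implicit.
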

\begin{proof}
Let $\{ Q_i \}$ be the collection of all the dyadic cubes contained in $Q$ which satisfies
\begin{align*}
    \lambda< \frac{1}{l(Q_i)^\beta} \int_{Q_i} |f| \; d\mathcal{H}^{\beta,Q}_\infty
\end{align*}
and $\{ Q_k \}$ be the maximal subcollection of $\{ Q_i \}$.  Then for every $Q_k$, its parent cube $Q_k'$ satisfies
\begin{align*}
    \lambda \geq \frac{1}{l(Q_k')^\beta} \int_{Q_k'} |f| \; d\mathcal{H}^{\beta,Q}_\infty \geq \frac{1}{2^\beta l(Q_k)^\beta} \int_{Q_k} |f| \; d\mathcal{H}^{\beta,Q}_\infty
\end{align*}
and thus
\begin{align*}
    \frac{1}{l(Q_k)^\beta} \int_{Q_k} |f| \; d\mathcal{H}^{\beta,Q}_\infty \leq 2^\beta \lambda,
\end{align*}
which is the first claimed property.  Toward the second claimed property, for $x\in Q \setminus \bigcup_k Q_k $, for every dyadic cube $Q'$ contained in $Q$ with $x \in Q'$, the fact that $Q'$ is not in any of the selected cubes means 
\begin{align*}
  \frac{1}{l(Q')^\beta} \int_{Q'} |f| \; d\mathcal{H}^{\beta,Q}_\infty\leq \lambda.
\end{align*}
In particular, by Theorem \ref{differentialhausdorfflimsup}, we obtain that 
$|f(x)| \leq \lambda$ for $\mathcal{H}^\beta$ -a.e. $x\in Q \setminus \bigcup_k Q_k$.
\end{proof}

\section{Main Result and Several Consequences}\label{mainresult}
The goal of this section is to prove Theorem \ref{jn_content} and then use it to deduce the several corollaries from the introduction.  We begin with an exponential decay lemma which is a slight variant of a result from the paper of John and Nirenberg \cite{JN}.

\begin{lemma}\label{exponentialestimate}
  Suppose that for some $C',c_\beta>1$, $F: (0,\infty) \to [0,\infty)$ satisfies
  \begin{align} \label{exp_decay}
      F(t) \leq C' \frac{F(t-c_\beta s)}{s}
  \end{align}
  for all $c_\beta^{-1} t \geq s \geq 1$.  If
  \begin{align*}
  F(t) \leq \frac{1}{t}
  \end{align*}
  for every $t>0$, then there exist constants $c,C>0$ such that
  \begin{align*}
      F(t) \leq  C \exp\{ -c t\}
  \end{align*}
  for every $t\geq c_\beta$.  
  \end{lemma}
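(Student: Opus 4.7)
The plan is to iterate the hypothesized inequality \eqref{exp_decay} with a fixed choice of $s$ that produces a genuine contraction factor, and then close off the recursion at the final step using the pointwise bound $F(t) \le 1/t$. This is the abstract mechanism behind John--Nirenberg type exponential estimates, and the argument of \cite{JN} proceeds in essentially this way.

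First I would fix $s_0 > C'$, for concreteness $s_0 = 2C'$, so that $\theta := C'/s_0 \in (0,1)$. For every $t \geq c_\beta s_0$ the hypothesis applied with $s = s_0$ gives $F(t) \leq \theta\, F(t - c_\beta s_0)$. A straightforward induction on $k$ then produces
\begin{equation*}
F(t) \leq \theta^k\, F(t - k c_\beta s_0) \qquad \text{whenever } t \geq k c_\beta s_0,
\end{equation*}
where the inductive step is valid because $t \geq k c_\beta s_0$ implies $t - (k-1) c_\beta s_0 \geq c_\beta s_0$, so \eqref{exp_decay} can be invoked one more time.

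Next, given $t \geq c_\beta s_0$, I would choose $k = \lfloor t/(c_\beta s_0) \rfloor - 1$, the largest integer for which the residual $r := t - k c_\beta s_0$ satisfies $r \in [c_\beta s_0, 2 c_\beta s_0)$, and insert the pointwise bound $F(r) \leq 1/r \leq 1/(c_\beta s_0)$. Since $k \geq t/(c_\beta s_0) - 2$ and $\theta < 1$, one has $\theta^k \leq \theta^{-2}\, \theta^{t/(c_\beta s_0)} = \theta^{-2} e^{-c t}$ with $c := -(\log \theta)/(c_\beta s_0) > 0$. Combining these two estimates yields the desired exponential decay on $[c_\beta s_0, \infty)$ with explicit constants $C = \theta^{-2}/(c_\beta s_0)$ and $c$ as above. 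For the remaining range $c_\beta \leq t < c_\beta s_0$ the bound $F(t) \leq 1/t \leq 1/c_\beta$ is already a constant, and since $e^{-ct} \geq e^{-c c_\beta s_0}$ on this bounded interval, enlarging $C$ once more absorbs this case.

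The main ``obstacle'' is really just bookkeeping of constants: one has to pick $s_0$ large enough that $C'/s_0 < 1$ while still small enough that the admissibility condition $c_\beta^{-1} t \geq s_0$ kicks in at a reasonable threshold, and one must stop the iteration early enough (i.e.\ at $k = \lfloor t/(c_\beta s_0)\rfloor - 1$ rather than $\lfloor t/(c_\beta s_0)\rfloor$) so that the residual argument is bounded away from zero and the $1/r$ bound is harmless. Once these choices are fixed the two estimates paste together mechanically.
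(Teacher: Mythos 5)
Your proof is correct and follows essentially the same route as the paper's: fix a value of $s$ once and for all (you take $s_0=2C'$ giving contraction factor $1/2$, the paper takes $s=C'e$ giving factor $e^{-1}$), iterate \eqref{exp_decay} the appropriate number of times, and terminate the recursion by invoking $F\le 1/t$ on the residual argument. The only differences are cosmetic bookkeeping in the choice of the iteration count and the treatment of the low range $c_\beta\le t<c_\beta s_0$, which the paper absorbs directly into the $m=0$ case.
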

\begin{proof}
For $t \geq c_\beta$, let $m \in \mathbb{N}\cup \{0\}$ be such that
\begin{align*}
c_\beta + C'c_\beta e m \leq t <  c_\beta + C'c_\beta e (m+1).
\end{align*}
Iteration of the choice $s=C'e$ in the inequality \eqref{exp_decay} a total of $m$ times yields
\begin{align*}
    F(t) &\leq C' \frac{F(t -C'c_\beta e )}{C'e} = \exp(-1) F(t -C'c_\beta e ) \leq \ldots  \leq \exp(-m) F(t -C'c_\beta e m ) \\
    &\leq  \frac{\exp(-m)}{c_\beta},
 \end{align*}
while the upper bound $ t <  c_\beta + C'c_\beta e (m+1)$ implies
\begin{align*}
-m \leq \frac{-t+c_\beta}{C'c_\beta e}+1.
\end{align*}
In particular, using the monotonicity properties of the exponential and $c_\beta>1$ we find
\begin{align*}
    F(t) &\leq  \frac{1}{c_\beta}\exp\left(\frac{-t+c_\beta}{C'c_\beta e}+1\right)  \\
    &\leq  \exp\left(\frac{1}{C'e}+1\right) \exp\left(\frac{-t}{C'c_\beta e}\right)
\end{align*}
for $t \geq c_\beta$, so that the choice
\begin{align*}
C&\vcentcolon=\exp\left(\frac{1}{C'e}+1\right) \\
c&\vcentcolon=\frac{1}{C'c_\beta e}
\end{align*}
gives the desired conclusion for $t \geq c_\beta$.  
\end{proof}

Theorem \ref{jn_content} is an almost immediate consequence of the following analogue of \cite[Lemma 1']{JN}:
\begin{lemma}\label{aux_lemma}
There exist constants $c,C>0$ such that
\begin{align}\label{jninequality_content}
\mathcal{H}^{\beta,Q}_\infty\left(\{x\in Q:|u(x)-c_Q|>t\}\right) \leq \frac{C}{\|u\|_{BMO^\beta_*(Q_0)}}  \int_{Q}  |u-c_Q|\; d\mathcal{H}^{\beta,Q}_\infty  \exp\left(\frac{-ct}{\|u\|_{BMO^\beta_*(Q_0)}}\right)
\end{align}
for all $u \in BMO^\beta(Q_0)$, finite subcubes $Q \subset Q_0$ parallel to $Q_0$, and $t\geq c_\beta \|u\|_{BMO^\beta_*(Q_0)}$, where
\begin{align*}
c_Q= \argmin_{c \in \mathbb{R}} \frac{1}{l(Q)^\beta}  \int_{Q}  |u-c| \;d\mathcal{H}^{\beta,Q}_\infty
\end{align*}
and $c_\beta=1+2^\beta$.
\end{lemma}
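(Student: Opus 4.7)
The plan is to carry out the John--Nirenberg iteration scheme in the Choquet setting and conclude via Lemma \ref{exponentialestimate}. By homogeneity of the semi-norm I may assume $\sigma \vcentcolon= \|u\|_{BMO^\beta_*(Q_0)} = 1$, applying the general case on $u/\sigma$ and substituting $t \mapsto t/\sigma$ at the end. For each finite $Q \subset Q_0$ parallel to $Q_0$ let $c_Q$ be as in the lemma, and define
$$\phi(t) \vcentcolon= \sup_{Q} \frac{\mathcal{H}^{\beta,Q}_\infty(\{x \in Q : |u(x) - c_Q| > t\})}{\int_Q |u - c_Q|\,d\mathcal{H}^{\beta,Q}_\infty},$$
with the supremum over finite subcubes. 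The layer-cake definition of the Choquet integral immediately yields the Chebyshev bound $\phi(t) \leq 1/t$.

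For the recursion step, fix $Q$ and $s \geq 1$, so $\frac{1}{l(Q)^\beta}\int_Q |u - c_Q|\,d\mathcal{H}^{\beta,Q}_\infty \leq 1 \leq s$, and apply the Calder\'on--Zygmund decomposition of Theorem \ref{cz} to produce non-overlapping cubes $\{Q_k\} \subset \mathcal{D}(Q)$ with $s < \frac{1}{l(Q_k)^\beta}\int_{Q_k}|u - c_Q|\,d\mathcal{H}^{\beta,Q}_\infty \leq 2^\beta s$ and $|u - c_Q| \leq s$ $\mathcal{H}^\beta$-a.e.\ outside $\bigcup_k Q_k$. Applying Proposition \ref{ov} to $\{Q_k\}$ extracts a packed subfamily $\{Q_{j_k}\}$ and non-overlapping ancestors $\{\tilde{Q}_m\}$ that still cover $\bigcup_k Q_k$; each $\tilde Q_m$ lies outside the maximal CZ family, hence satisfies $\frac{1}{l(\tilde Q_m)^\beta}\int_{\tilde Q_m}|u - c_Q|\,d\mathcal{H}^{\beta,Q}_\infty \leq s$. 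For every selection cube $R \in \{Q_{j_k}\} \cup \{\tilde Q_m\}$, the inclusion $\mathcal{D}(R) \subset \mathcal{D}(Q)$ gives $\mathcal{H}^{\beta,R}_\infty \equiv \mathcal{H}^{\beta,Q}_\infty$ on subsets of $R$ (any $\mathcal{D}(Q)$-cover of $E \subset R$ that uses proper ancestors of $R$ can be replaced by $R$ itself), so by Proposition \ref{sublinear}(3),
$$l(R)^\beta |c_R - c_Q| \leq \int_R |u - c_R|\,d\mathcal{H}^{\beta,R}_\infty + \int_R |u - c_Q|\,d\mathcal{H}^{\beta,Q}_\infty \leq (1 + 2^\beta s)\,l(R)^\beta,$$
and thus $|c_R - c_Q| \leq c_\beta s$. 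Consequently, for $t > c_\beta s$ the set $\{|u - c_Q| > t\} \cap R \subset \{|u - c_R| > t - c_\beta s\}$, and summing over $R$ through content subadditivity, using property (3) of Proposition \ref{ov} to bound $\sum_m l(\tilde Q_m)^\beta \leq \sum_k l(Q_{j_k})^\beta$, the definition of $\phi$, the lower CZ bound $l(Q_{j_k})^\beta \leq s^{-1}\int_{Q_{j_k}}|u - c_Q|\,d\mathcal{H}^{\beta,Q}_\infty$, and the packing consequence displayed just after Proposition \ref{ov}, yields
$$\mathcal{H}^{\beta,Q}_\infty(\{|u - c_Q| > t\}) \leq \frac{2C'}{s}\,\phi(t - c_\beta s)\int_Q |u - c_Q|\,d\mathcal{H}^{\beta,Q}_\infty.$$

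Dividing by $\int_Q |u - c_Q|\,d\mathcal{H}^{\beta,Q}_\infty$ and taking the supremum over $Q$ produces the recursion $\phi(t) \leq (2C'/s)\phi(t - c_\beta s)$ valid for $c_\beta^{-1}t \geq s \geq 1$. Combined with $\phi(t) \leq 1/t$, Lemma \ref{exponentialestimate} gives $\phi(t) \leq Ce^{-ct}$ for $t \geq c_\beta$, which upon reversing the $u/\sigma$ rescaling is exactly \eqref{jninequality_content}. The principal obstacle is the non-additivity of the Choquet integral, which blocks the textbook step $\sum_k \int_{Q_k} g \leq \int_{\bigcup_k Q_k} g$ of the classical John--Nirenberg argument; this forces the detour through Proposition \ref{ov} to extract a packed subfamily, which necessarily introduces the ancestor cubes $\tilde Q_m$ that must be handled on equal footing with the CZ cubes themselves. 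Keeping both displacement estimates below the single threshold $c_\beta s = (1 + 2^\beta)s$---using the $2^\beta s$ CZ bound on the subfamily cubes and the $s$ non-maximality bound on the ancestors---is precisely what pins down the constant $c_\beta = 1 + 2^\beta$.
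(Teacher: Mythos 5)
Your proposal is correct and follows the same route as the paper's proof: normalize the seminorm, set up the extremal function $\phi$ (the paper's $F$), establish the Chebyshev bound, run the Calder\'on--Zygmund decomposition plus Proposition \ref{ov}, control the drift of the optimal constants by $c_\beta s$, and close the recursion via Lemma \ref{exponentialestimate}. The only differences are cosmetic: you track the oscillation relative to $c_Q$ rather than translating so $c_Q=0$, you observe the slightly sharper bound $\le s$ for the ancestor cubes $\tilde Q_m$ via non-maximality (the paper uses the looser $2^\beta s$ uniformly), and you absorb a factor of $2$ from $\sum_m l(\tilde Q_m)^\beta \le \sum_k l(Q_{j_k})^\beta$ rather than invoking the non-redundancy reduction, none of which changes the final constant $c_\beta = 1 + 2^\beta$ or the conclusion.
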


\begin{proof}
We assume without loss of generality that $c_Q=0$ and $\|u\|_{BMO^\beta_*(Q_0)}=1$, by replacing $u$ with $\frac{u-c_Q}{\|u\|_{BMO^\beta_*(Q_0)}}$.  Let $F(t)$ be the smallest function for which
\begin{align}\label{jninequality_content_F}
\mathcal{H}^{\beta,Q}_\infty\left(\{x\in Q:|u(x)|>t\}\right) \leq F(t)  \int_{Q}  |u|\; d\mathcal{H}^{\beta,Q}_\infty
\end{align}
for all $t> 0$, $u \in BMO^\beta(Q_0)$ with $\|u\|_{BMO^\beta_*(Q_0)}=1$ and finite subcubes $Q \subset Q_0$ parallel to $Q_0$.  As Chebychev's inequality implies
\begin{align*}
F(t) \leq \frac{1}{t},
\end{align*}
it only remains to show that
\begin{align*}
F(t) \leq C' \frac{F(t-c_\beta s)}{s}
\end{align*}
for all $c_\beta^{-1}t \geq s \geq 1$, where $c_\beta=1+2^\beta$, from which the desired inequality follows by an application of Lemma \ref{exponentialestimate}.  

The reduction that $c_Q=0$ and $\|u\|_{BMO^\beta_*(Q_0)}=1$ implies that
\begin{align} \label{upperbound}
\frac{1}{l(Q)^\beta} \int_{Q}  |u| \;d\mathcal{H}^{\beta,Q}_{\infty} \leq 1.
\end{align}
In particular, if 
\begin{align*}
c_\beta^{-1}t \geq s \geq 1,
\end{align*}
$\lambda = s$ is an admissible height for an application of Theorem \ref{cz} to $u$ to obtain a countable collection of non-overlapping dyadic cubes $\{ Q_j\} \subset \mathcal{D}(Q)$  such that 
\begin{enumerate}
    \item $ s< \frac{1}{l(Q_j)^\beta} \int_{Q_j} |u| \; d\mathcal{H}^{\beta,Q}_\infty \leq 2^\beta  s$,
    \item $|u(x)|\leq  s$ for $\mathcal{H}^\beta$ -a.e. $x\in Q \setminus \cup_j Q_j$.
\end{enumerate}
An application of Proposition \ref{ov} to this family yields a subfamily $\{ Q_{j_k}\}$ and non-overlapping ancestors $\{ \tilde{Q}_k\}$ which satisfy
\begin{enumerate}
\item \begin{align*}
\bigcup_{j} Q_j \subset \bigcup_{k} Q_{j_k} \cup \bigcup_{k} \tilde{Q}_k
\end{align*}
\item
\begin{align*}
\sum_{Q_{j_k} \subset Q}l(Q_{j_k})^\beta \leq 2l(Q)^\beta, \text{ for each dyadic cube } Q.
\end{align*}
\item For each $\tilde{Q}_k$, \begin{align*}
l(\tilde{Q}_k)^\beta \leq \sum_{Q_{j_i} \subset \tilde{Q}_k}l(Q_{j_i})^\beta.
\end{align*}
\end{enumerate}
As before, we assume we avoid redundancy in this covering so that if $Q_{j_k} \subset \tilde{Q}_m$ for some $k,m$ we do not utilize the cube $Q_{j_k}$.  The fact that
\begin{align*}
t \geq c_\beta s > s
\end{align*}
implies
\begin{align*}
\{x\in Q:|u(x)|>t\} \subset \bigcup_{j} Q_j \cup N \subset \bigcup_{k, Q_{j_k} \not \subset \tilde{Q}_{m}} Q_{j_k} \cup \bigcup_{k} \tilde{Q}_k \cup N,
\end{align*}
where $N$ is a null set with respect to $\mathcal{H}^\beta$ (and therefore also $\mathcal{H}^{\beta,Q}_\infty$).  As the inclusion is preserved if we intersect with $\{x\in Q:|u(x)|>t\}$ we have
\begin{align*}
\{x\in Q:|u(x)|>t\} \subset  \bigcup_{k, Q_{j_k} \not\subset \tilde{Q}_{m}}  \{x\in Q_{j_k}:|u(x)|>t\} \cup \bigcup_{k}  \{x\in \tilde{Q}_k:|u(x)|>t\} \cup N,
\end{align*}
so that by subadditivity of the Hausdorff content we can estimate
\begin{align*}
\mathcal{H}^{\beta,Q}_\infty\left(\{x\in Q:|u(x)|>t\}\right) &\leq \sum_{k, Q_{j_k} \not \subset \tilde{Q}_{m}} \mathcal{H}^{\beta,Q}_\infty\left(\{x\in Q_{j_k}:|u(x)|>t\}\right) \\
&\;\;+ \sum_k \mathcal{H}^{\beta,Q}_\infty\left(\{x\in \tilde{Q}_k :|u(x)|>t\}\right).
\end{align*}
Next, for $Q'=Q_{j_k},\tilde{Q}_k$ we estimate
\begin{align*}
c_{Q'}&= \argmin_{c \in \mathbb{R}} \frac{1}{l(Q')^\beta}  \int_{Q'}  |u-c| \;d\mathcal{H}^{\beta,Q'}_\infty.
\end{align*}
We have, by subadditivity of the dyadic Hausdorff content and using $s\geq 1$,
\begin{align*}
|c_{Q'}| &= \frac{1}{l(Q')^\beta} \int_{Q'} |c_{Q'}|d\mathcal{H}^{\beta,Q'}_\infty \\
&\leq \frac{1}{l(Q')^\beta} \int_{Q'} |u-c_{Q'}|\;d\mathcal{H}^{\beta,Q'}_\infty + \frac{1}{l(Q')^\beta} \int_{Q'} |u|\;d\mathcal{H}^{\beta,Q'}_\infty \\
&\leq 1 + 2^\beta  s \\
&\leq (1+2^{\beta}) s\\
&= c_\beta s.
\end{align*}
In particular, 
\begin{align*}
|u| &\leq |u-c_{Q'}| + |c_{Q'}| \\
&\leq  |u-c_{Q'}| + c_\beta s
\end{align*} 
so that
\begin{align*}
\mathcal{H}^{\beta,Q}_\infty\left(\{x\in Q:|u(x)|>t\}\right) &\leq \sum_{k, Q_{j_k} \not \subset \tilde{Q}_{m}} \mathcal{H}^{\beta,Q}_\infty\left(\{x\in Q_{j_k}:|u(x)-c_{Q_{j_k}}|>t-c_\beta s\}\right) \\
&\;\;+ \sum_k \mathcal{H}^{\beta,Q}_\infty\left(\{x\in \tilde{Q}_{k}:|u(x)-c_{\tilde{Q}_{k}}|>t-c_\beta s\}\right).
\end{align*}
As $\|u-c_{Q'}\|_{BMO^\beta_*(Q_0)} =  \|u\|_{BMO^\beta_*(Q_0)} =1$, using the definition of $F$ we have
\begin{align*}
\mathcal{H}^{\beta,Q}_\infty\left(\{x\in Q:|u(x)|>t\}\right) &\leq \sum_{k, Q_{j_k} \not \subset \tilde{Q}_{m}} F(t-c_\beta s) \int_{Q_{j_k}} |u-c_{Q_{j_k}}|\;d \mathcal{H}^{\beta,Q}_{\infty} \\
&\;\;+ \sum_k F(t-c_\beta s) \int_{\tilde{Q}_{k}} |u-c_{\tilde{Q}_{k}}|\;d \mathcal{H}^{\beta,Q}_{\infty}\\
&\leq \sum_{k, Q_{j_k} \not \subset \tilde{Q}_{m}}  F(t-c_\beta s) l(Q_{j_k})^\beta + \sum_k  F(t-c_\beta s) l(\tilde{Q}_{k})^\beta\\
&\leq  F(t-c_\beta s) \sum_k  l(Q_{j_k})^\beta,
\end{align*}
where we utilize that there is no redundancy in cubes in the collections.  Next, we recall that $\{Q_{j_k}\}$ is a subcollection of cubes from the Calder\'on-Zygmund decomposition and therefore each $Q_{j_k}$ satisfies
\begin{align*}
l(Q_{j_k})^\beta <\frac{1}{ s} \int_{Q_{j_k}} |u| d\mathcal{H}^{\beta,Q}_{\infty}.
\end{align*}
The use of this inequality in the preceding chain of inequalities leads to the estimate
\begin{align*}
\mathcal{H}^{\beta,Q}_{\infty}\left(\{x\in Q:|u(x)|>t\}\right) &\leq F(t-c_\beta s) \sum_k  \frac{1}{ s} \int_{Q_{j_k}} |u| d\mathcal{H}^{\beta,Q}_{\infty} \\
&\leq F(t-c_\beta s) \frac{C'}{ s} \int_{\bigcup_k Q_{j_k}} |u| d\mathcal{H}^{\beta,Q}_{\infty}\\
&\leq F(t-c_\beta s) \frac{C'}{s} \int_{Q} |u| d\mathcal{H}^{\beta,Q}_{\infty}.
\end{align*}
This completes the proof of the claim and therefore the theorem.
\end{proof}

Theorem \ref{jn_content} is an immediate consequence of Lemma \ref{jninequality_content}, when one takes into account the constants computed in Lemma \ref{exponentialestimate}:

\begin{proof}
From the proof of Lemma \ref{jninequality_content} we have
\begin{align*}
\mathcal{H}^{\beta,Q}_{\infty}\left(\{x\in Q:|u(x)-c_{Q}|>t\}\right) &\leq C \exp\left(\frac{-ct}{\|u\|_{BMO^\beta_*(Q_0)}}\right)  \frac{1}{\|u\|_{BMO^\beta(Q_0)}} \int_{Q}  |u-c_{Q}| d\mathcal{H}^{\beta,Q}_{\infty} \\
&\leq C  \exp\left(\frac{-ct}{\|u\|_{BMO^\beta_*(Q_0)}}\right)  l(Q)^\beta
\end{align*}
for all $t \geq c_\beta \|u\|_{BMO^\beta_*(Q_0)}$.  But if $t \in (0,c_\beta \|u\|_{BMO^\beta_*(Q_0)})$, we have
\begin{align*}
\mathcal{H}^{\beta,Q}_{\infty}\left(\{x\in Q:|u(x)-c_{Q}|>t\}\right) &\leq  l(Q)^\beta \\
&\leq l(Q)^\beta \exp\left(\frac{-t}{c_\beta \|u\|_{BMO^\beta_*(Q_0)}}+1\right) \\
&= l(Q)^\beta \exp(1) \exp\left(\frac{-t}{c_\beta \|u\|_{BMO^\beta_*(Q_0)}}\right) \\
&\leq  l(Q)^\beta \exp\left(\frac{1}{C'e}+1\right) \exp\left(\frac{-t}{C'c_\beta e\|u\|_{BMO^\beta_*(Q_0)}}\right)\\
&=  l(Q)^\beta C \exp\left(\frac{-ct}{\|u\|_{BMO^\beta_*(Q_0)}}\right) \\
&\leq  l(Q)^\beta C \exp\left(\frac{-ct}{\|u\|_{BMO^\beta_*(Q_0)}}\right)
\end{align*}
where 
\begin{align*}
C&\vcentcolon=\exp\left(\frac{1}{C'e}+1\right) \\
c&\vcentcolon=\frac{1}{C'c_\beta e}
\end{align*}
are the same constants as above derived in Lemma \ref{exponentialestimate}.  It only remains to replace the dyadic Hausdorff content adapted to $Q$ with the Hausdorff content, which by the equivalence recalled in Section \ref{preliminaries} yields the desired estimate for all $t>0$ with the constants
\begin{align*}
C&\vcentcolon=C_\beta \exp\left(\frac{1}{C'e}+1\right) \\
c&\vcentcolon=\frac{1}{C_\beta} \frac{1}{C'c_\beta e}.
\end{align*}
This completes the proof of Theorem \ref{jn_content}.
\end{proof}

\subsection{Consequences of the John-Nirenberg Inequality}\label{consequences}
We next give the proof of Corollary \ref{exp_integrability}.

\begin{proof}
From Theorem \ref{jn_content} and the fact that $\|u\|_{BMO^\beta(Q_0)}\leq 1$, we have the estimate
\begin{align*}
\mathcal{H}^{\beta}_{\infty} \left(\{x \in Q : |u-c_Q| >t\}\right) &\leq C\exp\left(\frac{-ct}{\|u\|_{BMO^\beta(Q_0)}}\right)l(Q)^\beta \\
 &\leq C\exp\left(-ct\right)l(Q)^\beta
\end{align*}
Thus, for $c'>0$ we find
\begin{align*}
\int_Q \exp\left(c'|u-c_Q|\right) \;d\mathcal{H}^{\beta}_{\infty} &= \int_0^\infty \mathcal{H}^{\beta}_{\infty}\left(\{x \in Q : |u-c_Q| >\ln(t)/c'\}\right)\;dt \\
&\leq l(Q)^\beta + \int_1^\infty \mathcal{H}^{\beta}_{\infty} \left(\{x \in Q : |u-c_Q| >\ln(t)/c'\}\right)\;dt \\
&\leq l(Q)^\beta +  C l(Q)^\beta \int_1^\infty \exp\left(\frac{-c\ln(t)}{c'}\right)\;dt \\
&=l(Q)^\beta \left(1+ C \int_1^\infty \frac{1}{t^{c/c'}}\right).
\end{align*}
In particular, the claimed estimate holds for any $c'<c$ and with constant
\begin{align*}
C'=\left(1+ C \int_1^\infty \frac{1}{t^{c/c'}}\right).
\end{align*}
\end{proof}

We next give the proof of Corollary \ref{pseminorms}.

\begin{proof}
One direction is straightforward:  The pseudo-H\"older inequality satisfied by the Hausdorff content \cite[(C7) on p.~5]{H1}, implies
\begin{align*}
\frac{1}{l(Q)^\beta} \int_Q |u-c| \;d\mathcal{H}^{\beta}_{\infty} &\leq \frac{C}{l(Q)^\beta} \left(\int_Q |u-c|^p \;d\mathcal{H}^{\beta}_{\infty}\right)^{1/p} \left(\int_Q |\chi_Q|^{p'} \;d\mathcal{H}^{\beta}_{\infty}\right)^{1/p'} \\
&= C \left(\frac{1}{l(Q)^{\beta}} \int_Q |u-c|^p \;d\mathcal{H}^{\beta}_{\infty}\right)^{1/p},
\end{align*}
from which the desired inequality follows from taking the infimum in $c$ and then the supremum over $Q \subset Q_0$.  For the other direction, Corollary \ref{exp_integrability} asserts
\begin{align*}
\int_Q \exp(c|u-c_Q|) \;d\mathcal{H}^{\beta}_{\infty} \leq C l(Q)^\beta
\end{align*}
for all $Q \subset Q_0$ and $u \in BMO^\beta(Q_0)$ with $\|u\|_{BMO^\beta(Q_0)} \leq 1$.
As for any $u \in BMO^\beta(Q_0)$, $u/\|u\|_{BMO^\beta(Q_0)}$ is admissible for application of this Corollary, from the Taylor expansion of the exponential, for any $u \in BMO^\beta(Q_0)$ we deduce
\begin{align*}
\int_Q c\frac{|u-c_Q|^p}{p!} \;d\mathcal{H}^{\beta}_{\infty} \leq C l(Q)^\beta \|u\|^p_{BMO^\beta(Q_0)} 
\end{align*}
The result then follows dividing by $l(Q)^\beta$, raising both sides to the $1/p$ power, and the observation that
\begin{align*}
(p!)^{1/p} \leq p.
\end{align*}
\end{proof}

We next prove Corollary \ref{nesting}.

\begin{proof}
Let $0<\alpha <\beta \leq d$.  Then for every $Q \subset Q_0$, with the choice of $c_Q$ which is suitable for $\mathcal{H}^{\alpha}_{\infty}$, using $\mathcal{H}^{\beta}_{\infty} \leq \left(\mathcal{H}^{\alpha}_{\infty}\right)^{\beta/\alpha}$ and the exponential decay proved in Theorem \ref{jn_content} we have
\begin{align*}
\int_Q |u-c_Q| \;d\mathcal{H}^{\beta}_{\infty} &= \int_0^\infty \mathcal{H}^{\beta}_{\infty}\left(\{ x\in Q : |u-c_Q|>t\}\right)\;dt \\
&\leq \int_0^\infty \left(\mathcal{H}^{\alpha}_{\infty}\left(\{ x\in Q : |u-c_Q|>t\}\right)\right)^{\beta/\alpha}\;dt \\
&\leq  C\int_0^\infty \left(l(Q)^\alpha C \exp(-ct/\|u\|_{BMO^\alpha(Q_0)})\right)^{\beta/\alpha}\;dt \\
&= C'l(Q)^\beta \|u\|_{BMO^\alpha(Q_0)}.
\end{align*}
As the infimum over $c$ is smaller than the left-hand-side, the result follows from dividing by $l(Q)^\beta$ and taking the supremum over finite subcubes $Q \subset Q_0$ parallel to $Q_0$.
\end{proof}

\section{Composition, Restriction, and a Sobolev Embedding}\label{otherresults}

We first prove Theorem \ref{composition}.

\begin{proof}
The fact that $\phi(0)=0$ implies
\begin{align*}
\int_{Q_0}  |\phi(u)| \;d\mathcal{H}^{\beta}_\infty \leq  \operatorname*{Lip}(\phi) \int_{Q_0}  |u| \;d\mathcal{H}^{\beta}_\infty,
\end{align*}
which along with the stability of quasicontinuity under composition with Lipschitz functions implies $\phi\circ u \in L^1(Q_0;\mathcal{H}^{\beta}_\infty)$.  Similarly, for every finite subcube $Q \subset Q_0$ parallel to $Q$ and any $c \in \mathbb{R}$ one has
\begin{align*}
\frac{1}{l(Q)^\beta}  \int_{Q}  |\phi\circ u-\phi \circ c| \;d\mathcal{H}^{\beta}_\infty \leq  \operatorname*{Lip}(\phi) \frac{1}{l(Q)^\beta}  \int_{Q}  |u-c| \;d\mathcal{H}^{\beta}_\infty,
\end{align*}
so that for every finite subcube $Q \subset Q_0$ parallel to $Q$ and suitable choice of $c \in \mathbb{R}$
\begin{align*}
\frac{1}{l(Q)^\beta}  \int_{Q}  |\phi\circ u-\phi \circ c| \;d\mathcal{H}^{\beta}_\infty \leq  \operatorname*{Lip}(\phi) \|u \|_{BMO^\beta(Q_0)}.
\end{align*}
But this shows
\begin{align*}
\inf_{c' \in \mathbb{R}} \frac{1}{l(Q)^\beta}  \int_{Q}  |\phi\circ u-c'| \;d\mathcal{H}^{\beta}_\infty \leq  \operatorname*{Lip}(\phi) \|u \|_{BMO^\beta(Q_0)}
\end{align*}
and therefore
\begin{align*}
\|\phi \circ u\|_{BMO^\beta(Q_0)} \leq \operatorname*{Lip}(\phi) \|u\|_{BMO^\beta(Q_0)}.
\end{align*}
\end{proof}

We next prove Theorem \ref{restriction}.

\begin{proof}
For $u \in BMO^k(Q_0)$ and any $k$-dimensional hyperplane $H_k$ parallel to $Q_0$, for suitable choice of $c>0$ the measure $\nu =c\mathcal{H}^k|_{Q_0\cap H_k} \in \mathcal{M}^k(Q_0)$ is admissible for the computation of the supremum.  Therefore, for any finite subcube $Q \subset Q_0$ parallel to $Q_0$, by the functional equivalence \eqref{HB}, one has the existence of $c_Q$ such that
\begin{align*}
\frac{1}{l(Q)^k} \int_{Q_0\cap H_k} \left|\;u|_{Q_0\cap H_k}- c_Q \right| d\mathcal{H}^k \leq C_\beta'\|u\|_{BMO^k(Q_0)}.
\end{align*}
But the left-hand-side lives above the infimum over all such $c \in \mathbb{R}$ so that
\begin{align*}
\sup_{Q \subset Q_0} &\inf_{c \in \mathbb{R}} \frac{1}{l(Q)^k} \int_{Q_0\cap H_k} \left|\;u|_{Q_0\cap H_k}- c\right| d\mathcal{H}^k \\
&\leq \sup_{Q \subset Q_0} \frac{1}{l(Q)^k} \int_{Q_0\cap H_k} \left|\;u|_{Q_0\cap H_k}- c_Q \right| d\mathcal{H}^k \leq C_\beta'\|u\|_{BMO^k(Q_0)}.
\end{align*}
It only remains to observe that the first line in the preceding is equivalent to the semi-norm introduced in \eqref{bmo_cube}, which is argued in the classical setting by John and Nirenberg just after \cite[(3)''']{JN}.  In particular, if we let $u_Q$ denote the average of $u$ over $\Pi_k (Q)$ with respect to $\mathcal{H}^k |_{\Pi_k (Q)}$, then
\begin{align*}
 \inf_{c \in \mathbb{R}} \frac{1}{l(Q)^k} \int_{Q_0\cap H_k} \left|\;u|_{Q_0\cap H_k}- c\right| d\mathcal{H}^k &\leq \frac{1}{l(Q)^k} \int_{Q_0\cap H_k} \left|\;u|_{Q_0\cap H_k}- u_Q \right| d\mathcal{H}^k \\
 &\leq \frac{1}{l(Q)^k} \frac{1}{l(Q)^k} \int_{Q_0\cap H_k} \int_{Q_0\cap H_k} \left|u(x)- u(y) \right| d\mathcal{H}^k(x) d\mathcal{H}^k(y) \\
 &\leq 2 \frac{1}{l(Q)^k} \int_{Q_0\cap H_k} |u(x)- c' | d\mathcal{H}^k(x)
\end{align*}
for any choice of $c' \in \mathbb{R}$.  The result follows from taking the infimum in $c'$ and then the supremum over subcubes $Q \subset Q_0$ parallel to $Q_0$.

In the case $k=d$, the equivalence of $\mathcal{H}^{d}_\infty$ and the Lebesgue measure implies
\begin{align*}
 \frac{1}{C} \frac{1}{l(Q)^d}  \int_{Q}  |u-c| \;d\mathcal{H}^{d}_\infty \leq   \frac{1}{l(Q)^d}  \int_{Q}  |u-c| \;dx \leq   C \frac{1}{l(Q)^d}  \int_{Q}  |u-c| \;d\mathcal{H}^{d}_\infty,
\end{align*}
from which the identity $BMO^d(Q_0) \equiv BMO(Q_0)$ follows from the same argument as above.
\end{proof}

We conclude the paper with a proof of Theorem \ref{morrey}.
\begin{proof}
We now show that $I_\alpha \mu \in BMO^{d-\alpha+\epsilon}(\mathbb{R}^d)$ for $\epsilon \in (0,\alpha)$. To this end, let $Q \subset \mathbb{R}^d$ be any finite cube and for $\mu \in \mathcal{M}^{d-\alpha}(\mathbb{R}^d)$ we let $2Q$ be the cube with the same center and twice the side length and define
\begin{align*}
\mu' &= \mu \chi_{2Q}\\
\mu''&= \mu - \mu'.
\end{align*}
Then for any $\nu \in \mathcal{M}^{d-\alpha+\epsilon}(\mathbb{R}^d)$ with $\|\nu\|_{\mathcal{M}^{d-\alpha+\epsilon}} \leq 1$ and any $c \in \mathbb{R}$ we have
\begin{align*}
 \frac{1}{l(Q)^{d-\alpha+\epsilon}}  \int_{Q}  |I_\alpha \mu-c| d\nu &\leq  \frac{1}{l(Q)^{d-\alpha+\epsilon}}  \int_{Q}  |I_\alpha \mu'| d\nu \\
 &\;\;+ \frac{1}{l(Q)^{d-\alpha+\epsilon}}  \int_{Q}  |I_\alpha \mu''-c| d\nu\\
 &=:I+II.
\end{align*}
Concerning $I$, we have the bound
\begin{align*}
I &\leq \frac{1}{l(Q)^{d-\alpha+\epsilon}} \frac{1}{\gamma(\alpha)} \int_{Q}  \int_{2Q} \frac{d|\mu|(y)}{|x-y|^{d-\alpha}} d\nu(x) \\
&= \frac{1}{\gamma(\alpha)} \frac{1}{l(Q)^{d-\alpha+\epsilon}} \int_{2Q} \int_{Q}   \frac{d\nu(x) }{|x-y|^{d-\alpha}} d|\mu|(y).
\end{align*}
However, a dyadic splitting yields 
\begin{align*}
\int_{Q}   \frac{d\nu(x) }{|x-y|^{d-\alpha}} &= \sum_{n=1}^\infty \int_{2^{-n+1}Q \setminus 2^{-n}Q}   \frac{d\nu(x) }{|x-y|^{d-\alpha}} \\
&\leq \sum_{n=1}^\infty  \frac{1}{(2^{-n}l(Q))^{d-\alpha}} \int_{2^{-n+1}Q \setminus 2^{-n}Q}   d\nu(x) \\
&\leq \sum_{n=1}^\infty  \frac{1}{(2^{-n}l(Q))^{d-\alpha}} \|\nu\|_{\mathcal{M}^{d-\alpha+\epsilon}} (2^{-n+1} l(Q))^{d-\alpha+\epsilon} \\
&= C\|\nu\|_{\mathcal{M}^{d-\alpha+\epsilon}}l(Q)^\epsilon.
\end{align*}
Therefore
\begin{align*}
I &\leq \frac{C}{\gamma(\alpha)} \|\nu\|_{\mathcal{M}^{d-\alpha+\epsilon}}\frac{1}{l(Q)^{d-\alpha}} \int_{2Q} d|\mu|(y) \\
&\leq C' \|\nu\|_{\mathcal{M}^{d-\alpha+\epsilon}} \|\mu\|_{\mathcal{M}^{d-\alpha}}.
\end{align*}

For $II$, the fact that $I_\alpha |\mu|(x)<+\infty$ for Lebesgue almost every $x \in \mathbb{R}^d$ yields the existence of $x_0 \in Q$ for which
\begin{align*}
c \vcentcolon= \int_{(2Q)^c} \frac{\mu(z)}{|x_0-z|^{d-\alpha}}<+\infty.
\end{align*}
With such a choice of $c$, we estimate
\begin{align*}
II &\leq   \frac{1}{\gamma(\alpha)}  \frac{1}{l(Q)^{d-\alpha+\epsilon}} \int_{Q} \int_{(2Q)^c}\left| \frac{1}{|x-z|^{d-\alpha}}-\frac{1}{|x_0-z|^{d-\alpha}}\right| d|\mu|(z) d\nu(x).
\end{align*}
The fact that $x, x_0 \in Q$ and $z \in (2Q)^c$ yields, by the mean value theorem, the estimate
\begin{align*}
\left| \frac{1}{|x-z|^{d-\alpha}}-\frac{1}{|x_0-z|^{d-\alpha}}\right| \leq \tilde{C} \frac{|x-x_0|}{|x_0-z|^{d-\alpha+1}},
\end{align*}
and therefore
\begin{align*}
II &\leq  \tilde{C}'l(Q)  \frac{1}{l(Q)^{d-\alpha+\epsilon}} \int_{Q}  \int_{(2Q)^c} \frac{1 }{|x_0-z|^{d-\alpha+1}} d|\mu|(z) d\nu(x).
\end{align*}
An analogous dyadic splitting to $I$ gives
\begin{align*}
\int_{(2Q)^c} \frac{d|\mu|(z) }{|x_0-z|^{d-\alpha+1}}   &= \sum_{n=1}^\infty \int_{2^{n+1}Q \setminus 2^{n}Q}   \frac{d|\mu|(z) }{|x_0-z|^{d-\alpha+1}}  \\
&\leq \sum_{n=1}^\infty  \frac{1}{(2^{n}l(Q))^{d-\alpha+1}} \int_{2^{n+1}Q \setminus 2^{n}Q}   d|\mu|(z)  \\
&\leq \sum_{n=1}^\infty  \frac{1}{(2^{n}l(Q))^{d-\alpha+1}} \|\mu\|_{\mathcal{M}^{d-\alpha}} (2^{n+1} l(Q))^{d-\alpha} \\
&= \frac{ \tilde{C}''}{l(Q)}\|\mu\|_{\mathcal{M}^{d-\alpha}},
\end{align*}
which when inserted in the inequality for $II$ yields that
\begin{align*}
II &\leq  C'' \|\nu\|_{\mathcal{M}^{d-\alpha+\epsilon}} \|\mu\|_{\mathcal{M}^{d-\alpha}}.
\end{align*}
Putting these two estimates together we obtain
\begin{align*}
 \frac{1}{l(Q)^{d-\alpha+\epsilon}}  \int_{Q}  |I_\alpha \mu-c| d\nu   \leq \left(C'+C''\right) \|\nu\|_{\mathcal{M}^{d-\alpha+\epsilon}} \|\mu\|_{\mathcal{M}^{d-\alpha}},
 \end{align*}
 which dividing by $\|\nu\|_{\mathcal{M}^{d-\alpha+\epsilon}}$ and taking the supremum over such measures, implies by the functional equivalence \eqref{HB}, that
\begin{align*}
 \frac{1}{l(Q)^{d-\alpha+\epsilon}}  \int_{Q}  |I_\alpha \mu-c| \;d\mathcal{H}^{d-\alpha+\epsilon}_\infty   \leq C''' \|\mu\|_{\mathcal{M}^{d-\alpha}}.
 \end{align*}
As the infimum over $c$ is smaller than the left-hand-side, the claim then follows by taking the supremum over cubes $Q$.  This completes the proof.
\end{proof}

\section*{Acknowledgments}
The authors would like to thank Rahul Garg, Augusto Ponce, Chun-Yen Shen, and Cody Stockdale for discussions concerning the results obtained in this manuscript.  Y.-W. Chen is supported by the National Science and Technology Council of Taiwan under research grant number 110-2115-M-003-020-MY3.  D. Spector is supported by the National Science and Technology Council of Taiwan under research grant number 110-2115-M-003-020-MY3 and the Taiwan Ministry of Education under the Yushan Fellow Program.  Part of this work was undertaken while D. Spector was visiting the IRMP Institute of the Universit\'e catholique de Louvain.  He would like to thank the IRMP Institute for its support and Augusto Ponce for his warm hospitality during the visit.

\begin{bibdiv}

\begin{biblist}

\bib{Adams1973}{article}{
   author={Adams, David R.},
   title={Traces of potentials. II},
   journal={Indiana Univ. Math. J.},
   volume={22},
   date={1972/73},
   pages={907--918},
   issn={0022-2518},
   review={\MR{313783}},
   doi={10.1512/iumj.1973.22.22075},
}

\bib{Adams1975}{article}{
   author={Adams, David R.},
   title={A note on Riesz potentials},
   journal={Duke Math. J.},
   volume={42},
   date={1975},
   number={4},
   pages={765--778},
   issn={0012-7094},
   review={\MR{458158}},
}
\bib{AdamsChoquet}{article}{
   author={Adams, David R.},
   title={A note on Choquet integrals with respect to Hausdorff capacity},
   conference={
      title={Function spaces and applications},
      address={Lund},
      date={1986},
   },
   book={
      series={Lecture Notes in Math.},
      volume={1302},
      publisher={Springer, Berlin},
   },
   date={1988},
   pages={115--124},
   review={\MR{942261}},
   doi={10.1007/BFb0078867},
}

\bib{AdamsChoquet1}{article}{
   author={Adams, David R.},
   title={Choquet integrals in potential theory},
   journal={Publ. Mat.},
   volume={42},
   date={1998},
   pages={3--66},
}

%

%

\bib{BourgainBrezis2002}{article}{
   author={Bourgain, Jean},
   author={Brezis, Ha\"{\i}m},
   title={On the equation ${\rm div}\, Y=f$ and application to control of
   phases},
   journal={J. Amer. Math. Soc.},
   volume={16},
   date={2003},
   number={2},
   pages={393--426},
   issn={0894-0347},
   review={\MR{1949165}},
   doi={10.1090/S0894-0347-02-00411-3},
}

\bib{BourgainBrezis2004}{article}{
   author={Bourgain, Jean},
   author={Brezis, Ha\"{\i}m},
   title={New estimates for the Laplacian, the div-curl, and related Hodge
   systems},
   language={English, with English and French summaries},
   journal={C. R. Math. Acad. Sci. Paris},
   volume={338},
   date={2004},
   number={7},
   pages={539--543},
   issn={1631-073X},
   review={\MR{2057026}},
   doi={10.1016/j.crma.2003.12.031},
}
\bib{BourgainBrezis2007}{article}{
   author={Bourgain, Jean},
   author={Brezis, Ha\"{\i}m},
   title={New estimates for elliptic equations and Hodge type systems},
   journal={J. Eur. Math. Soc. (JEMS)},
   volume={9},
   date={2007},
   number={2},
   pages={277--315},
   issn={1435-9855},
   review={\MR{2293957}},
   doi={10.4171/JEMS/80},
}

%
%
%

%

%
%

\bib{EvansGariepy}{book}{
   author={Evans, Lawrence C.},
   author={Gariepy, Ronald F.},
   title={Measure theory and fine properties of functions},
   series={Studies in Advanced Mathematics},
   publisher={CRC Press, Boca Raton, FL},
   date={1992},
   pages={viii+268},
   isbn={0-8493-7157-0},
   review={\MR{1158660}},
}

%
%

\bib{Fefferman}{article}{
   author={Fefferman, Charles},
   title={Characterizations of bounded mean oscillation},
   journal={Bull. Amer. Math. Soc.},
   volume={77},
   date={1971},
   pages={587--588},
   issn={0002-9904},
   review={\MR{280994}},
   doi={10.1090/S0002-9904-1971-12763-5},
}

\bib{FeffermanStein}{article}{
   author={Fefferman, C.},
   author={Stein, E. M.},
   title={$H^{p}$ spaces of several variables},
   journal={Acta Math.},
   volume={129},
   date={1972},
   number={3-4},
   pages={137--193},
   issn={0001-5962},
   review={\MR{447953}},
   doi={10.1007/BF02392215},
}
%

\bib{GS}{article}{
   author={Garg, Rahul},
   author={Spector, Daniel},
   title={On the regularity of solutions to Poisson's equation},
   journal={C. R. Math. Acad. Sci. Paris},
   volume={353},
   date={2015},
   number={9},
   pages={819--823},
   issn={1631-073X},
   review={\MR{3377679}},
   doi={10.1016/j.crma.2015.07.001},
}

\bib{GS1}{article}{
   author={Garg, Rahul},
   author={Spector, Daniel},
   title={On the role of Riesz potentials in Poisson's equation and Sobolev
   embeddings},
   journal={Indiana Univ. Math. J.},
   volume={64},
   date={2015},
   number={6},
   pages={1697--1719},
   issn={0022-2518},
   review={\MR{3436232}},
   doi={10.1512/iumj.2015.64.5706},
}

\bib{H}{article}{
author={Harjulehto, Petteri },
   author={Hurri-Syrj\"{a}nen, Ritva},
   title={Estimates for the variable order Riesz potential with applications},
   journal={https://arxiv.org/pdf/2111.10141.pdf},
}

\bib{H1}{article}{
author={Harjulehto, Petteri },
   author={Hurri-Syrj\"{a}nen, Ritva},
   title={On Choquet integrals and Poincar\'e-Sobolev inequalities},
   journal={https://arxiv.org/abs/2203.15623},
}

%

\bib{Hutchinson}{article}{
   author={Hutchinson, John E.},
   title={Fractals and self-similarity},
   journal={Indiana Univ. Math. J.},
   volume={30},
   date={1981},
   number={5},
   pages={713--747},
   issn={0022-2518},
   review={\MR{625600}},
   doi={10.1512/iumj.1981.30.30055},
}

\bib{JN}{article}{
   author={John, F.},
   author={Nirenberg, L.},
   title={On functions of bounded mean oscillation},
   journal={Comm. Pure Appl. Math.},
   volume={14},
   date={1961},
   pages={415--426},
   issn={0010-3640},
   review={\MR{131498}},
   doi={10.1002/cpa.3160140317},
}

\bib{KSS}{article}{
   author={Kang, Hyeonbae},
   author={Seo, Jin Keun},
   author={Shim, Yong-sun},
   title={On the restrictions of BMO},
   journal={J. Korean Math. Soc.},
   volume={31},
   date={1994},
   number={4},
   pages={703--707},
   issn={0304-9914},
   review={\MR{1312539}},
}

\bib{MP}{article}{
   author={Mattila, Pertti},
   author={Preiss, David},
   title={Rectifiable measures in ${\bf R}^n$ and existence of principal
   values for singular integrals},
   journal={J. London Math. Soc. (2)},
   volume={52},
   date={1995},
   number={3},
   pages={482--496},
   issn={0024-6107},
   review={\MR{1363815}},
   doi={10.1112/jlms/52.3.482},
}

\bib{MS}{article}{
   author={Mart\'{\i}nez, \'{A}ngel D.},
   author={Spector, Daniel},
   title={An improvement to the John-Nirenberg inequality for functions in
   critical Sobolev spaces},
   journal={Adv. Nonlinear Anal.},
   volume={10},
   date={2021},
   number={1},
   pages={877--894},
   issn={2191-9496},
   review={\MR{4191703}},
   doi={10.1515/anona-2020-0157},
}

\bib{Mel}{article}{
   author={Mel\cprime nikov, M. S.},
   title={Metric properties of analytic $\alpha $-capacity and
   approximations of analytic functions with a H\"{o}lder condition by rational
   functions},
   language={Russian},
   journal={Mat. Sb. (N.S.)},
   volume={79 (121)},
   date={1969},
   pages={118--127},
   review={\MR{0268389}},
}



\bib{Nakai}{article}{
   author={Nakai, Eiichi},
   title={On the restriction of functions of bounded mean oscillation to the
   lower-dimensional space},
   journal={Arch. Math. (Basel)},
   volume={43},
   date={1984},
   number={6},
   pages={519--529},
   issn={0003-889X},
   review={\MR{775740}},
   doi={10.1007/BF01190955},
}


\bib{OP}{article}{
   author={Ooi, Keng Hao},
   author={Phuc, Nguyen Cong},
   title={Characterizations of predual spaces to a class of Sobolev multiplier type spaces},
   journal={J. Funct. Anal.},
   volume={282},
   date={2022},
   number={6},
   pages={109348},
   doi={10.1016/j.jfa.2021.109348},
}

\bib{OP1}{article}{
   author={Ooi, Keng Hao},
   author={Phuc, Nguyen Cong},
   title={On a capacitary strong type inequality and related capacitary estimates},
   journal={Rev. Mat. Iberoam.},
   volume={38},
   date={2022},
number={2},
   pages={589-599},
   doi={10.4171/RMI/1285},
}

\bib{OV}{article}{
   author={Orobitg, Joan},
   author={Verdera, Joan},
   title={Choquet integrals, Hausdorff content and the Hardy-Littlewood
   maximal operator},
   journal={Bull. London Math. Soc.},
   volume={30},
   date={1998},
   pages={145--150},
}

\bib{PS}{article}{
   author={Ponce, Augusto C.},
   author={Spector, Daniel},
   title={A boxing inequality for the fractional perimeter},
   journal={Ann. Sc. Norm. Super. Pisa Cl. Sci. (5)},
   volume={20},
   date={2020},
   number={1},
   pages={107--141},
   issn={0391-173X},
   review={\MR{4088737}},
}

\bib{PS1}{article}{
   author={Ponce, Augusto C.},
   author={Spector, Daniel},
   title={A decomposition by non-negative functions in the Sobolev space
   $W^{k,1}$},
   journal={Indiana Univ. Math. J.},
   volume={69},
   date={2020},
   number={1},
   pages={151--169},
   issn={0022-2518},
   review={\MR{4077159}},
   doi={10.1512/iumj.2020.69.8237},
}

%


\bib{RSS}{article}{
   author={Raita, Bogdan},
   author={Spector, Daniel},
   author={Stolyarov, Dmitriy},
   title={A trace inequality for solenoidal charges},
   journal={Potential Anal.},
   volume={(to appear)},
   pages={https://arxiv.org/abs/2109.02029},
}

\bib{Saito}{article}{
   author={Saito, Hiroki},
   title={A note on embedding inequalities for weighted Sobolev and Besov
   spaces},
   journal={Taiwanese J. Math.},
   volume={26},
   date={2022},
   number={2},
   pages={363--379},
   issn={1027-5487},
   review={\MR{4396485}},
   doi={10.11650/tjm/211204},
}

\bib{Saito1}{article}{
   author={Saito, Hiroki},
   title={Boundedness of the strong maximal operator with the Hausdorff
   content},
   journal={Bull. Korean Math. Soc.},
   volume={56},
   date={2019},
   number={2},
   pages={399--406},
   issn={1015-8634},
   review={\MR{3936475}},
   doi={10.4134/BKMS.b180286},
}

\bib{ST}{article}{
   author={Saito, Hiroki},
   author={Tanaka, Hitoshi},
   title={Dual of the Choquet spaces with general Hausdorff content},
   journal={Studia Math.},
   volume={266},
   date={2022},
   pages={323-335},
   doi={10.4064/sm210415-29-1},
}

\bib{STW}{article}{
   author={Saito, Hiroki},
   author={Tanaka, Hitoshi},
   author={Watanabe, Toshikazu},
   title={Abstract dyadic cubes, maximal operators and Hausdorff content},
   journal={Bull. Sci. Math.},
   volume={140},
   date={2016},
   number={6},
   pages={757--773},
   issn={0007-4497},
   review={\MR{3543752}},
   doi={10.1016/j.bulsci.2016.02.001},
}

\bib{STW1}{article}{
   author={Saito, Hiroki},
   author={Tanaka, Hitoshi},
   author={Watanabe, Toshikazu},
   title={Block decomposition and weighted Hausdorff content},
   journal={Canad. Math. Bull.},
   volume={63},
   date={2020},
   number={1},
   pages={141--156},
   issn={0008-4395},
   review={\MR{4059812}},
   doi={10.4153/s000843951900033x},
}

\bib{S}{book}{
   author={Stein, Elias M.},
   title={Singular integrals and differentiability properties of functions},
   series={Princeton Mathematical Series, No. 30},
   publisher={Princeton University Press, Princeton, N.J.},
   date={1970},
   pages={xiv+290},
   review={\MR{0290095}},
}

%


%
\bib{Spector-ND}{article}{
   author={Spector, Daniel},
   title={New directions in harmonic analysis on $L^1$},
   journal={Nonlinear Anal.},
   volume={192},
   date={2020},
   pages={111685, 20},
   issn={0362-546X},
   review={\MR{4034690}},
   doi={10.1016/j.na.2019.111685},
}

\bib{Spector-PM}{article}{
   author={Spector, Daniel},
   title={A noninequality for the fractional gradient},
   journal={Port. Math.},
   volume={76},
   date={2019},
   number={2},
   pages={153--168},
   issn={0032-5155},
   review={\MR{4065096}},
   doi={10.4171/pm/2031},
}

%
%
 \bib{Trudinger}{article}{
   author={Trudinger, Neil S.},
   title={On imbeddings into Orlicz spaces and some applications},
   journal={J. Math. Mech.},
   volume={17},
   date={1967},
   pages={473--483},
   review={\MR{0216286}},
   doi={10.1512/iumj.1968.17.17028},
}

\bib{Uchiyama}{article}{
   author={Uchiyama, Akihito},
   title={A constructive proof of the Fefferman-Stein decomposition of BMO
   $({\bf R}^{n})$},
   journal={Acta Math.},
   volume={148},
   date={1982},
   pages={215--241},
   issn={0001-5962},
   review={\MR{666111}},
   doi={10.1007/BF02392729},
}

\bib{VS1}{article}{
   author={Van Schaftingen, Jean},
   title={Estimates for $L^1$-vector fields},
   language={English, with English and French summaries},
   journal={C. R. Math. Acad. Sci. Paris},
   volume={339},
   date={2004},
   number={3},
   pages={181--186},
   issn={1631-073X},
   review={\MR{2078071}},
   doi={10.1016/j.crma.2004.05.013},
}

\bib{VS}{article}{
   author={Van Schaftingen, Jean},
   title={Function spaces between BMO and critical Sobolev spaces},
   journal={J. Funct. Anal.},
   volume={236},
   date={2006},
   number={2},
   pages={490--516},
   issn={0022-1236},
   review={\MR{2240172}},
   doi={10.1016/j.jfa.2006.03.011},
}

\bib{VS2}{article}{
   author={Van Schaftingen, Jean},
   title={Estimates for $L^1$ vector fields under higher-order differential
   conditions},
   journal={J. Eur. Math. Soc. (JEMS)},
   volume={10},
   date={2008},
   number={4},
   pages={867--882},
   issn={1435-9855},
   review={\MR{2443922}},
   doi={10.4171/JEMS/133},
}

\bib{VS3}{article}{
   author={Van Schaftingen, Jean},
   title={Limiting fractional and Lorentz space estimates of differential
   forms},
   journal={Proc. Amer. Math. Soc.},
   volume={138},
   date={2010},
   number={1},
   pages={235--240},
   issn={0002-9939},
   review={\MR{2550188}},
   doi={10.1090/S0002-9939-09-10005-9},
}


\bib{YangYuan}{article}{
   author={Yang, Dachun},
   author={Yuan, Wen},
   title={A note on dyadic Hausdorff capacities},
   journal={Bull. Sci. Math.},
   volume={132},
   date={2008},
   number={6},
   pages={500--509},
   issn={0007-4497},
   review={\MR{2445577}},
   doi={10.1016/j.bulsci.2007.06.005},
}


\end{biblist}
	
\end{bibdiv}

\end{document}